\numberwithin{equation}{section}
\newtheorem{theorem}{Theorem}[section]
\newtheorem{lemma}[theorem]{Lemma} 
\newtheorem{proposition}[theorem]{Proposition}
\newtheorem{definition}[theorem]{Definition}
\newtheorem{remark}[theorem]{Remark}
\newtheorem{corollary}[theorem]{Corollary}
\numberwithin{equation}{section}
\begin{document}

\title[ Nonlocal integrodifferential operators with  sign-changing weight functions]{ A class of elliptic equations involving a general nonlocal integrodifferential operators with  sign-changing weight functions}

\author{Lauren Maria Mezzomo Bonaldo, Olímpio Hiroshi Miyagaki and Elard Juárez Hurtado}
\thanks{Department of Mathematics Universidade Federal de S\~ao Carlos.
Carlos,  S\~ao Carlos SP, Brazil.
E-mail: laurenmbonaldo@hotmail.com. Supported
by CAPES}\, \thanks{Department of Mathematics Universidade Federal de S\~ao Carlos.
Carlos,  S\~ao Carlos SP, Brazil.
E-mail: ohmiyagaki@gmail.com Supported
by CAPES} \,
\thanks{Department of Mathematics, Universidade de Brasília, Brasília,  DF, Brazil.
E-mail: elardjh2@gmail.com Supported
by CAPES}
\subjclass[2010]{35A15, 35B38, 35D30, 35J60, 35R11, 46E35, 47G2O}
\keywords{Nonlocal integrodifferential operator; fractional Sobolev space with variable exponents;
sign-changing weight function; variational methods}

\date{}
\maketitle
\begin{abstract}
In this paper we investigate the existence of nontrivial weak solutions to a class of elliptic equations \eqref{p1} involving a general nonlocal integrodifferential operator $\mathscr{L}_{\mathcal{A}K},$  two real parameters, and two weight functions, which can be sign-changing.
Considering different situations concerning the growth of the nonlinearities involved in the problem \eqref{p1}, we prove the existence of 
two nontrivial distinct  solutions and the existence of a continuous family of eigenvalues. The proofs of the main results are based on ground state solutions using the Nehari method, Ekeland’s variational principle, and the direct method of the calculus of variations. The difficulties arise from the fact that the operator  $ \mathscr{L}_{\mathcal{A}K}$   is nonhomogeneous and the nonlinear term is undefined.

\end{abstract}

 
  \section{Introduction and statements of main results}
   \hfill \break
In this paper, we deal with results concerning the existence of  weak solutions for a  class of elliptic equations involving a general nonlocal integrodifferential operator with two real parameters, two weight functions which can be sign-changing  and different subcritical nonlinearities. More precisely,  in a smooth bounded domain $\Omega$ of $\mathbb{R}^{N}$ ($N\geqslant 2$), we consider the following problem
  \begin{equation}\tag{$\mathscr{P}$}
\label{p1}\left\{\begin{array}{rc}
\begin{split}
 \mathscr{L}_{\mathcal{A}K}u  &= \lambda \mathfrak{a}(x)|u|^{\mathfrak{m}_{1}(x)-2}u +\beta \mathfrak{b}(x)|u|^{\mathfrak{m}_{2}(x)-2}u  &\mbox{ in }& \Omega,\\
u  &=  0 &\mbox{ in }& \mathbb{R}^{N}\setminus\Omega,
\end{split}
\end{array}\right.
\end{equation}
where $\lambda$ and $\beta $ are real parameters, the  weight functions   $\mathfrak{a}, \mathfrak{b}:\overline{\Omega} \to \mathbb{R}$  can be sign-changing  in $\Omega$, $\mathfrak{m}_1\,$ and $\mathfrak{m}_2\in C^{+}(\overline{\Omega}),$
and to define the  general nonlocal integrodifferential operator $\mathscr{L}_{\mathcal{A}K}$ we will consider the variable exponent  $p(x):=p(x,x)$ for all $x \in \mathbb{R}^{N}$ with $p \in C(\mathbb{R}^{N} \times\mathbb{R}^{N})$  satisfying:
  \begin{equation}\label{a23}\tag{$\mathit{p_1}$}
  \begin{split}
   & p \mbox{ is symmetric, that is, } p(x,y)=p(y,x),  \\&  1< p^{-} :=\displaystyle{\inf _{(x,y) \in \mathbb{R}^{N}\times \mathbb{R}^{N}}}\,p(x,y) \leqslant \displaystyle{\sup _{(x,y)\in \mathbb{R}^{N}\times \mathbb{R}^{N}}}\,p(x,y):= p^{+}<\frac{N}{s},\hspace{0.2cm} s \in(0,1),
 \end{split}
  \end{equation}
  and we consider  the fractional critical variable exponent related to $ p  \in C(\mathbb{R}^{N} \times\mathbb{R}^{N})$ defined by $\displaystyle{p^{\star}_{s}(x)= \frac{Np(x)}{N-sp(x)}}.$ 
 
The  general nonlocal integrodifferential operator $\mathscr{L}_{\mathcal{A}K}$ is defined on suitable fractional Sobolev spaces (see Section 2) by
 \begin{equation*}
  \mathscr{L}_{\mathcal{A}K}u(x)= P.V. \int_{\mathbb{R}^{N}}\mathcal{A}(u(x)-u(y))K(x,y)\,dy, \hspace{0.2cm} x \in \mathbb{R}^{N},
  \end{equation*}
  where $P.V.$ is the principal value.

  The map $\mathcal{A}:\mathbb{R}\to \mathbb{R}$ is a measurable function  satisfying the next assumptions:
\begin{itemize}
 \item[ $(a_1)$] $\mathcal{A}$ is continuous, odd, and the map $\mathscr{A}:\mathbb{R}\to \mathbb{R}$ given by
 $$\mathscr{A}(t):= \int^{|t|}_{0} \mathcal{A}(\tau) d\tau$$
is  strictly convex;
 \item[$(a_{2})$]  There exist positive constants $c_{\mathcal{A}}$ and $C_{\mathcal{A}}$, such that  for  all $t \in \mathbb{R}$ and  for all $(x,y)\in \mathbb{R}^{N}\times\mathbb{R}^{N}$
\begin{equation*}
\mathcal{A}(t)t\geqslant  c_{\mathcal{A}} \vert t \vert^{p(x,y)} \hspace{0.3cm} \mbox{ and } \hspace{0.3cm} \vert \mathcal{A}(t) \vert\leqslant C_{\mathcal{A}} \vert t \vert^{p(x,y)-1};
\end{equation*}
\item[$(a_{3})$]$\mathcal{A}(t)t\leqslant p^{+}\mathscr{A}(t)$ for all $t \in \mathbb{R}$.
\end{itemize}
The kernel $K: \mathbb{R}^{N}\times \mathbb{R}^{N} \to \mathbb{R}^{+}$ is a measurable function  satisfying the following property:
\begin{itemize}
 \item[ $(\mathcal{K})$] There exist constants $b_0$ and $b_1$, such that $0<b_0\leqslant b_1$,
 $$b_0\leqslant K(x,y)|x-y|^{N+sp(x,y)}\leqslant b_1 \mbox{ for all } (x,y)\in \mathbb{R}^{N}\times\mathbb{R}^{N} \mbox{ and } x\neq y.$$
 \end{itemize}
 It is worth to note that the assumptions $(a_1)$-$(a_3)$ and $(\mathcal{K})$ were similarly introduced  in \cite{moli,brezish,colau,emdi,warma,elard1,elard2,kim,migio,mazon,puccisara,vazquez}.
A mathematical generalization very special for$\mathcal{A}$ and $K$ satisfying $(a_1)$-$(a_3)$ and $(\mathcal{K})$ is $\mathcal{A}(t)=|t|^{p(x,y)-2}t$ and  $K(x,y)= |x-y|^{-(N+sp(x,y))}$. Hence the operator $\mathscr{L}_{\mathcal{A}K}$ reduces to the  fractional $p(\cdot)$-Laplacian operator $(-\Delta)^{s}_{p(\cdot)}$, which is defined by
\begin{equation*}
  (-\Delta)^{s}_{p(\cdot)}u(x)= P.V. \int_{\mathbb{R}^{N}}\frac{|u(x)-u(y)|^{p(x,y)-2}(u(x)-u(y))}{|x-y|^{N+sp(x,y)}}\,dy \mbox{ for all } x \in \mathbb{R}^{N}.
  \end{equation*}

  During the last years, a remarkable interest has arisen in the study of partial differential equations involving fractional operators. The interest that has arisen in this type of problems is related to different questions both from the point of view of pure mathematics and from the point of view of applications, as for example in, continuum mechanics, optimization, finance phase transitions, image process, game theory, crystal dislocation, quasi-geostrophic flows, peridynamic theory, see for instance \cite{moll,bucu,caff,dinezza,vaal}.
In order to investigate the problems studied here, we need to work in spaces with variable exponents,
and the study these spaces is very interesting in itself (see Apendix). Indeed Fractional Sobolev spaces with variable exponents have been studied in depth during the last decade, for more references see instance\cite{anaour,bahrouni,ky,kaufmann}.
 
 
 Recently in  \cite{edcarlos} the authors obtained significant results for a problem involving the $p$-Laplace operator with concave-convex type nonlinearities and weight functions which can be sign-changing. This type of nonlinearity has received a lot of attention since being first investigated by Ambrosetti, Brezis, and Cerami in \cite{principio}. Since then the techniques for obtaining the existence of a solution for this type of nonlinearity vary according to the characterization of the problem. In \cite{edcarlos}, for example, they use the fibering method, introduced and developed by Pohozaev in \cite{poh1,poh2,poh3,poh4} and the constrained minimization in Nehari manifolds,  to determine the existence of two distinct solutions.
 
In \cite{azr,anaour,bahrouni,brasco,ky,kaufmann} the authors studied  the equation $\eqref{p1} $ when the operator $\mathscr{L}_{\mathcal{A}K} $  is the fractional $p(\cdot)$-Laplacian operator with $ \beta=1 $ or $ \beta=0 $, $\mathfrak{a}$ positive weight function, and associated nonlinearities with variable exponents. Under appropriate assumptions, the authors prove the results of the existence of solutions to
the problem  \eqref{p1} and also the existence of eigenvalues, via Mountain Pass Theorem, Ekeland’s variational principle, and the method of sub–super solutions.


Therefore, motivated by the latest research on the study of nonlocal problems, in this current paper we obtain new results on the existence of solutions for a wide class of elliptic equations involving general nonlocal integrodifferential operators with adequate nonlinearities involving weight functions with sign-changing, in this sense we extend, complement and improve some of the main results that appear in the following works \cite{azr,anaour,bahrouni,edcarlos,fann,fp,binge1} for a class of nonlocal problems. More precisely in this paper, we focus in the study of problem \eqref{p1} in three different situations. In the first situation, we are interested in showing the existence of two different nontrivial solutions for a convex-concave problem with sign-changing weight functions in the framework of Fractional Sobolev spaces with constant exponents via ground state solutions using the Nehari method. Already in the last two situations, we show the existence of a continuous family of values to the problem \eqref{p1} in the frame of the Fractional Sobolev spaces with variable exponents via the Ekeland’s variational principle and the direct method of the calculus of variations.

We would like to draw attention that in our knowledge, the operator $\mathcal{L}_{\mathcal{A}K}$ in the problem \eqref{p1} is a nonlocal general integrodiferencial operator where the map $ \mathcal{A} $ and kernel $ K $ under consideration are very general ($ K $ includes singular kernels). On the other hand  problem \eqref{p1} involves fractional $ p(\cdot)$-Laplacian operator, which have more complicated nonlinearities, for example, they are nonhomogeneous. In addition, by the characterization of our problem, taking as prototype space $W^{s,p}_{0}(\Omega)$, defined in \cite{ian1}, we define the following space 
\begin{equation*}\label{space}
  \mathscr{W} = W^{s,p(\cdot, \cdot)}_{0}:= \{u \in W^{s, p(\cdot, \cdot)}(\mathbb{R}^{N}): u=0 \mbox{ a.e. in }   \mathbb{R}^{N}\setminus \Omega \},
 \end{equation*} 
which is a separable and reflexive Banach space and that will play a crucial in this paper. It is important to mention that space $\mathscr{W}$ has some interesting properties similar to the fractional Sobolev spaces, specifically, we prove a equivalence of norms and that an important result of compact and continuous embedding remains valid, for more details, see Subsection  \ref{espacop}.

Our work also is motivated by the difficulty in applying the variational methods. For example,  in the Lemma \ref{ll1}  we will show  that the operator $\Phi'$  satisfies the property $(S_{+})$, which is a property of compactness of the operator and is usually essential to
obtain other properties such as Palais–Smale compactness condition or Cerami’s condition in a variational framework.
Besides, we highlight that changing the sign of the weights $ \mathfrak{a} $ and $ \mathfrak{b} $ creates some difficulties, among them, when analysing the Euler Lagrange functional associated with the problem \eqref{p1}, we can apply variational methods directly and the analysis for the existence of a problem solution for each type of nonlinearity becomes very delicate. For example, in the case of concave-convex nonlinearity, as detailed in the subsection \ref{nehari}, it was essential to divide the Nehari variety into two parts: $ \mathcal{N}_{\lambda, 1} = \mathcal{N}_{\lambda, 1}^{+} \cup \mathcal{N}_{\lambda, 1}^{-} $, use the fibration application to obtain a unique projection on each part $ \mathcal{N}^{\pm}_{\lambda, 1} $ and using the standard minimization procedure we get at least one solution for each set $ \mathcal{N}^{\pm}_{\lambda, 1} $.
 The same occurred for sublinear and superlinear nonlinearities, in each case it was essential to consider appropriate solution sets, as we will see in Subsections \ref{teorema3} and \ref{teorema4}, to obtain the expected results.



 Before we present our main results, we will give the following definition.

\begin{definition}
 We say that  $u \in \mathscr{W} $ is a weak solution of the   problem \eqref{p1} if and only if
 \begin{equation}\label{weak}
 \begin{split}
\int_{\mathbb{R}^{N}\times \mathbb{R}^{N}}\mathcal{A}(u(x)-u(y))(v(x)-v(y))K(x,y)\,dx\,dy  =&
   \int_{\Omega}\big[\lambda\mathfrak{a}(x)|u|^{\mathfrak{m}_{1}(x)-2}u\,v\,+ \beta \mathfrak{b}(x)|u|^{\mathfrak{m}_{2}(x)-2}u\,v\big]\,dx
  \end{split}
 \end{equation}
  for every $v \in \mathscr{W}$.
 
 When $\beta=0$, we say that $\lambda$ is an eigenvalue of the problem \eqref{p1}, if there exists $u \in \mathscr{W}\setminus\{0\}$ satisfying \eqref{weak}, that is, $u$ is the corresponding eigenfunction to $\lambda$.
 \end{definition}
We introduce the Euler Lagrange functional $ \mathcal{J}_{\lambda, \beta}: \mathscr{W} \to \mathbb{R}$ associated  with the problem \eqref{p1} defined  by
 \begin{equation}\label{energia}
\mathcal{J}_{\lambda, \beta}(u)= \Phi(u) - \lambda\mathcal{I}_{\mathfrak{a}}(u) - \beta \mathcal{I}_{\mathfrak{b}}(u), \mbox{ for all } u \in  \mathscr{W}
\end{equation}
where $\Phi$ is defined in the Lemma \ref{ll1},
 $$ \mathcal{I}_{\mathfrak{a}}(u)= \displaystyle{\int_{\Omega}\frac{\mathfrak{a}(x)|u|^{\mathfrak{m}_{1}(x)}}{\mathfrak{m}_{1}(x)}\,dx} \mbox{  and  } \mathcal{I}_{\mathfrak{b}}(u)= \displaystyle{\int_{\Omega}\frac{\mathfrak{b}(x)|u|^{\mathfrak{m}_{2}(x)}}{\mathfrak{m}_{2}(x)}\,dx}.$$
\noindent In our first results we will consider the problem  \eqref{p1} when the exponents are constant. For this, we assume that
  \begin{equation}\label{const} \tag{$\mathcal{H}$}
 \left\{ \begin{array}{lcc}
1< \mathfrak{m}_1<l \leqslant p \leqslant m < \mathfrak{m}_2 < p_{s}^{\star}=\frac{Np}{N-sp}, s \in(0,1);\\
             \\ 1<l \leqslant p \leqslant m< \frac{N}{s};  \\
             \\ (\mathfrak{m}_2-1l)(m-l)<(\mathfrak{m}_2-l)(m-\mathfrak{m}_1).
             \end{array}
   \right.
 \end{equation}
We assume   that map $\mathcal{A}:\mathbb{R}\to \mathbb{R}$  satisfies the conditions $(a_{1})$-$(a_2)$  and additionally, the conditions:
 \begin{itemize}
\item[$(a'_{3})$] $\mathcal{A}:\mathbb{R}\to \mathbb{R}$ is a map  of class $C^{2}(\mathbb{R}, \mathbb{R})$ and all $  t \in \mathbb{R}$ is hold:
 \begin{equation*}
 \begin{split}
& (i) \hspace{0.2cm} l\mathscr{A}(t) \leqslant \mathcal{A}(t)t\leqslant p\mathscr{A}(t) ;
             \\& (ii) \hspace{0.2cm} (l-1)\mathcal{A}(t)t \leqslant t^{2}\mathcal{A}'(t)\leqslant (m-1)\mathcal{A}(t)t;              \\ & (iii) \hspace{0.2cm}  (l-2)\mathcal{A}'(t)\leqslant \mathcal{A}''(t)t \leqslant (m-2)\mathcal{A}'(t).
          \end{split}
 \end{equation*}
\end{itemize}
Therefore, we obtain the following result involving concave-convex nonlinearities. 
 
 \begin{theorem}\label{con-convex 1}
Suppose that assumptions $(a_{1})$, $(a_{2})$, $(a_{3}')$, $(\mathcal{K})$, \eqref{const} hold, and that  weight functions $\mathfrak{a}, \mathfrak{b}\in L^{\infty}(\Omega)$ are such that  $a^{+}, b^{+}\not\equiv 0$, i.e., can be sign-changing  in $\Omega$. Then there exists $ \tilde{\lambda} > 0$ such that  problem \eqref{p1}, with $\beta =1$, admits at least one  ground state solution $u$ in $\mathcal{N}^{+}_{\lambda,1}$ satisfying $\mathcal{J}_{\lambda,1}(u)< 0$ for all $0< \lambda< \tilde{\lambda}$. ($\mathcal{N}^{+}_{\lambda,1}$ is defined in \eqref{neharispace} and  $\mathcal{J}_{\lambda,1}$ is defined in \eqref{energia})
\end{theorem}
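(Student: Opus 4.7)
The approach is the standard Nehari-manifold-plus-fibering method for concave--convex problems with sign-changing weights, adapted to the non-homogeneous operator $\mathscr{L}_{\mathcal{A}K}$. For $u \in \mathscr{W}\setminus\{0\}$ I introduce the fibering map $\phi_u(t) = \mathcal{J}_{\lambda,1}(tu)$, so $tu \in \mathcal{N}_{\lambda,1}$ iff $\phi'_u(t)=0$, and the sign of $\phi''_u(t)$ distinguishes $\mathcal{N}^{+}_{\lambda,1}$ (local minima of $\phi_u$) from $\mathcal{N}^{-}_{\lambda,1}$ (local maxima). Combining $(a'_3)(i)$ with $(a_2)$ and $(\mathcal{K})$, the three pieces of $\phi_u$ admit two-sided comparisons with monomials of orders in $[l,p]$, $\mathfrak{m}_1$, and $\mathfrak{m}_2$; the ordering $\mathfrak{m}_1 < l \leq p \leq m < \mathfrak{m}_2$ from \eqref{const} then gives the expected concave-near-zero, convex-near-infinity profile on the fibers where $\int_{\Omega}\mathfrak{a}|u|^{\mathfrak{m}_1}\,dx > 0$, which is precisely where $\mathcal{N}^{+}_{\lambda,1}$ sits.

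\textbf{Structure of $\mathcal{N}^{+}_{\lambda,1}$ and negative energy level.} First I would prove that there exists $\tilde{\lambda}>0$ with $\mathcal{N}^{0}_{\lambda,1} = \emptyset$ for every $\lambda \in (0,\tilde{\lambda})$. If $u \in \mathcal{N}^{0}_{\lambda,1}$, the simultaneous vanishing $\phi'_u(1) = \phi''_u(1) = 0$ allows one to eliminate two of the three modular terms, yielding an estimate of the form $\lambda \geq C \|u\|^{\alpha}_{\mathscr{W}}$ that, via the embeddings $\mathscr{W} \hookrightarrow L^{\mathfrak{m}_i}(\Omega)$, forces $\lambda \geq \tilde{\lambda}$; the role of the numeric condition $(\mathfrak{m}_2 - l)(m-l)<(\mathfrak{m}_2-l)(m-\mathfrak{m}_1)$ is exactly to make the scaling exponents in this calculation combine with the right sign. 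Next, I would show $c^{+} := \inf_{\mathcal{N}^{+}_{\lambda,1}} \mathcal{J}_{\lambda,1} < 0$: on $\mathcal{N}^{+}_{\lambda,1}$ the Nehari identity together with $(a'_3)(i)$ rewrites the energy as a negative multiple of $\int_{\Omega}\mathfrak{a}|u|^{\mathfrak{m}_1}\,dx$ up to a controlled remainder, and since $a^{+}\not\equiv 0$ one picks a test function supported where $\mathfrak{a}>0$ and projects it along its fiber to obtain a strictly negative value.

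\textbf{Minimization and compactness.} Take a minimizing sequence $(u_n) \subset \mathcal{N}^{+}_{\lambda,1}$ for $c^{+}$. Because $\mathcal{N}^{0}_{\lambda,1} = \emptyset$, $\mathcal{N}^{+}_{\lambda,1}$ is a $C^{1}$-submanifold near each of its points, and Ekeland's variational principle on this constraint produces an almost-critical sequence. Coercivity of $\mathcal{J}_{\lambda,1}$ restricted to $\mathcal{N}_{\lambda,1}$, deduced from $(a_2)$, the Nehari identity, and the embeddings of $\mathscr{W}$, gives $\sup_n \|u_n\|_{\mathscr{W}}<\infty$; compact embedding into $L^{\mathfrak{m}_i}(\Omega)$ handles the nonlinear terms and yields $\mathcal{J}'_{\lambda,1}(u_n) \to 0$ up to a Lagrange multiplier that vanishes in the limit. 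Finally, the $(S_{+})$ property of $\Phi'$ from Lemma \ref{ll1} upgrades the weak limit $u_n \rightharpoonup u$ to strong convergence in $\mathscr{W}$. Passing to the limit, $u \in \mathcal{N}_{\lambda,1}$ with $\mathcal{J}_{\lambda,1}(u)=c^{+}<0$, so $u \neq 0$; the strict inequality $\phi''_{u_n}(1)>0$ survives in the limit (again because $\mathcal{N}^{0}_{\lambda,1}=\emptyset$ leaves no room to collapse), placing $u$ in $\mathcal{N}^{+}_{\lambda,1}$.

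\textbf{Main obstacle.} The most delicate part is the fibering analysis for the non-homogeneous operator $\mathscr{L}_{\mathcal{A}K}$: unlike the pure fractional $p(\cdot)$-Laplacian, $\phi'_u(t)$ and $\phi''_u(t)$ are not pure monomials in $t$, so the standard one-line algebraic manipulations (unique fiber projection, emptiness of $\mathcal{N}^{0}_{\lambda,1}$, uniform lower bound of $\|u\|_{\mathscr{W}}$ on $\mathcal{N}^{-}_{\lambda,1}$) must all be re-done with the two-sided estimates provided by $(a'_3)(i)$--$(iii)$. The sign-changing nature of $\mathfrak{a}$ and $\mathfrak{b}$ adds a second layer of care: one must separate the positive and negative parts of each weight when bounding the modulars against $\mathfrak{a}^{\pm}$ and $\mathfrak{b}^{\pm}$, and the choice of $\tilde{\lambda}$ must be uniform over this decomposition.
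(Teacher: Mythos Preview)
Your plan follows the same Nehari-plus-fibering framework as the paper, including the preliminary steps: emptiness of $\mathcal{N}^{0}_{\lambda,1}$ for small $\lambda$, the negative infimum $c^{+}<0$ on $\mathcal{N}^{+}_{\lambda,1}$, and coercivity of $\mathcal{J}_{\lambda,1}$ on $\mathcal{N}_{\lambda,1}$. The divergence is in the compactness step. You run Ekeland on the constraint, kill the Lagrange multiplier along the sequence, and then invoke the $(S_{+})$ property of $\Phi'$ from Lemma~\ref{ll1} to upgrade weak to strong convergence. The paper instead argues by direct minimization with a fibering contradiction: take a minimizing sequence $(u_k)\subset\mathcal{N}^{+}_{\lambda,1}$, pass to a weak limit $u_0$, and assume the convergence is not strong; the strict weak lower semicontinuity of the operator term then makes $\wp'_{u_k}(t_0)>0$ eventually for the $t_0>0$ with $t_0u_0\in\mathcal{N}^{+}_{\lambda,1}$, the fibering geometry forces $t_0>1$, and since $\wp_{u_0}$ is decreasing on $(0,t_0)$ one obtains $\mathcal{J}_{\lambda,1}(t_0u_0)<c^{+}$, a contradiction. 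Only after strong convergence is secured does the paper invoke a Lagrange-multiplier lemma once, at the limit point, to conclude that the minimizer is a free critical point. In particular the $(S_{+})$ property is never used for this theorem.

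Your route is workable but contains a step you pass over too quickly: from the Ekeland sequence one gets $\mathcal{J}'_{\lambda,1}(u_n)=\mu_n\,\mathsf{G}'_{\lambda}(u_n)+o(1)$, and testing against $u_n$ yields $\mu_n\,\wp''_{u_n}(1)=o(1)$. To force $\mu_n\to 0$ you need $\wp''_{u_n}(1)$ bounded away from zero uniformly in $n$, and the pointwise fact $\mathcal{N}^{0}_{\lambda,1}=\emptyset$ does not give this by itself. For the non-homogeneous operator this requires a quantitative estimate combining $(a'_{3})(ii)$, the Nehari identity, the embeddings, and a lower bound on either $\|u_n\|_{\mathscr{W}}$ or $\int_{\Omega}\mathfrak{a}|u_n|^{\mathfrak{m}_1}\,dx$ along the minimizing sequence; you should state how you obtain it. The paper's fibering-contradiction argument sidesteps this difficulty entirely, which is the main practical advantage of its approach here.
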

\begin{theorem}\label{con-convex 2}
Under the same conditions of Theorem  \ref{con-convex 1} there exists $ \tilde{\lambda} > 0$ such that  problem \eqref{p1}, with $\beta =1$, admits at least one  ground state solution $u$ in $\mathcal{N}^{-}_{\lambda,1}$ satisfying $\mathcal{J}_{\lambda,1}(u)> 0$ for all $0< \lambda< \tilde{\lambda}$. ($\mathcal{N}^{-}_{\lambda,1}$ is defined in \eqref{neharispace} and $\mathcal{J}_{\lambda,1}$ is defined in \eqref{energia})
\end{theorem}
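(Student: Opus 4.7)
The plan is to minimize $\mathcal{J}_{\lambda,1}$ over the submanifold $\mathcal{N}^{-}_{\lambda,1}$ via the fibering method of Pohozaev. For each $u \in \mathscr{W}\setminus\{0\}$ with $\int_\Omega \mathfrak{b}(x)|u|^{\mathfrak{m}_2}\,dx > 0$, I would analyse $\phi_u(t):=\mathcal{J}_{\lambda,1}(tu)$. The exponent ordering $\mathfrak{m}_1 < l\leq p\leq m < \mathfrak{m}_2$ together with $(a'_3)(i)$ shows that $\phi_u(t)$ behaves qualitatively like $At^p - \lambda B t^{\mathfrak{m}_1}-Ct^{\mathfrak{m}_2}$, so for $\lambda$ sufficiently small it has exactly two positive critical points $t^{+}(u)<t^{-}(u)$, with $t^{-}(u)u \in \mathcal{N}^{-}_{\lambda,1}$ realizing a strict local maximum. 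This secures non-emptiness.

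Next I would prove two quantitative estimates on $\mathcal{N}^{-}_{\lambda,1}$. Plugging $u \in \mathcal{N}^{-}_{\lambda,1}$ into the defining condition $\phi_u''(1)<0$ and invoking the Sobolev-type embedding $\mathscr{W}\hookrightarrow L^{\mathfrak{m}_2}(\Omega)$ yields a uniform lower bound $\|u\|\geq\rho>0$. Then combining the Nehari identity with $(a'_3)(i)$ gives
\begin{equation*}
\mathcal{J}_{\lambda,1}(u)=\mathcal{J}_{\lambda,1}(u)-\tfrac{1}{\mathfrak{m}_2}\langle\mathcal{J}'_{\lambda,1}(u),u\rangle \geq C_1\Bigl(\tfrac{1}{p}-\tfrac{1}{\mathfrak{m}_2}\Bigr)\|u\|^{l} - \lambda C_2\Bigl(\tfrac{1}{\mathfrak{m}_1}-\tfrac{1}{\mathfrak{m}_2}\Bigr)\|u\|^{\mathfrak{m}_1},
\end{equation*}
which, since $l>\mathfrak{m}_1$, implies $c^{-}:=\inf_{\mathcal{N}^{-}_{\lambda,1}}\mathcal{J}_{\lambda,1}>0$ after possibly shrinking $\tilde{\lambda}$.

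I would then apply Ekeland's variational principle on the $C^{1}$ submanifold $\mathcal{N}^{-}_{\lambda,1}$ to produce a sequence $\{u_n\}\subset\mathcal{N}^{-}_{\lambda,1}$ with $\mathcal{J}_{\lambda,1}(u_n)\to c^{-}$ and, after a standard Lagrange-multiplier computation, $\mathcal{J}'_{\lambda,1}(u_n)\to 0$ in $\mathscr{W}^{*}$. The same lower estimate in reverse yields boundedness, so up to a subsequence $u_n \rightharpoonup u$ in $\mathscr{W}$. The compact embeddings $\mathscr{W}\hookrightarrow L^{\mathfrak{m}_i}(\Omega)$ handle the weighted integrals involving $\mathfrak{a},\mathfrak{b}$, and the $(S_+)$ property of $\Phi'$ established in Lemma \ref{ll1} then upgrades weak to strong convergence $u_n\to u$. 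Hence $u$ is a critical point of $\mathcal{J}_{\lambda,1}$ with $\mathcal{J}_{\lambda,1}(u)=c^{-}>0$, which forces $u\neq 0$.

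The main obstacle is to verify that the limit $u$ in fact lies in $\mathcal{N}^{-}_{\lambda,1}$ rather than on the degenerate set $\mathcal{N}^{0}_{\lambda,1}$ or on the minimum piece $\mathcal{N}^{+}_{\lambda,1}$. Passing to the limit in $\phi''_{u_n}(1)<0$ only yields $\phi''_u(1)\leq 0$, so the delicate step is excluding $\phi''_u(1)=0$. Here I would use the third inequality in \eqref{const}, which provides a strict algebraic separation between the conditions defining $\mathcal{N}^{\pm}_{\lambda,1}$ and thereby shows $\mathcal{N}^{0}_{\lambda,1}\cap(\mathscr{W}\setminus\{0\})=\emptyset$ for $\lambda<\tilde{\lambda}$. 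This also allows projection of $u$ via $t^{-}(u)$ and comparison with $c^{-}$ to rule out the $\mathcal{N}^{+}$ alternative. Once $u\in\mathcal{N}^{-}_{\lambda,1}$ is confirmed, the minimality together with the vanishing Lagrange multiplier identifies $u$ as a ground state weak solution of \eqref{p1} with $\mathcal{J}_{\lambda,1}(u)>0$.
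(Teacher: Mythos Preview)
Your approach is correct but follows a genuinely different route from the paper's. The paper does \emph{not} use Ekeland's principle or the $(S_{+})$ property here. Instead it argues by direct minimization on $\mathcal{N}^{-}_{\lambda,1}$: take a minimizing sequence $(u_k)$, pass to a weak limit $\tilde{u}_0$, verify via the convergence of $\int_\Omega \mathfrak{b}|u_k|^{\mathfrak{m}_2}$ and the positivity of $\mathbb{J}^{-}$ that $\int_\Omega \mathfrak{b}|\tilde{u}_0|^{\mathfrak{m}_2}>0$, and then project $\tilde{u}_0$ onto $\mathcal{N}^{-}_{\lambda,1}$ via the fibering map, obtaining $t_1\tilde{u}_0\in\mathcal{N}^{-}_{\lambda,1}$. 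The key structural fact is that each $u_k\in\mathcal{N}^{-}_{\lambda,1}$ is a \emph{global maximum} of its fibering map, so $\mathcal{J}_{\lambda,1}(u_k)\geq\mathcal{J}_{\lambda,1}(t_1u_k)$ for all $k$; combining this with a strict Brezis--Lieb inequality for $\Phi$ (should $u_k\nrightarrow\tilde{u}_0$) yields $\mathcal{J}_{\lambda,1}(t_1\tilde{u}_0)<\mathbb{J}^{-}$, a contradiction. Thus strong convergence is forced without any Palais--Smale machinery.

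Your route via Ekeland and $(S_{+})$ is more general-purpose and perfectly viable, but it hides a technical step: to deduce $\mathcal{J}'_{\lambda,1}(u_n)\to 0$ from the constrained gradient condition you must show the Lagrange multipliers $\mu_n\to 0$, which in turn requires $|\wp''_{u_n}(1)|$ to stay bounded away from zero along the sequence. This can be established by mimicking the proof that $\mathcal{N}^{0}_{\lambda,1}=\emptyset$ for small $\lambda$ (Lemma~\ref{lem3.1}), but it is an extra verification absent from the paper's argument. (Also, your displayed inequality should carry $\|u\|^{p}$ rather than $\|u\|^{l}$, since in the constant-exponent setting $(a'_3)(i)$ and $(a_2)$ give $\Phi(u)-\tfrac{1}{\mathfrak{m}_2}\langle\Phi'(u),u\rangle\geq(\tfrac{1}{p}-\tfrac{1}{\mathfrak{m}_2})c_{\mathcal{A}}b_0\|u\|^{p}$; this does not affect the conclusion.) The paper's fibering-maximum argument buys you a cleaner compactness step tailored to the Nehari structure, while your approach has the advantage of reusing Lemma~\ref{ll1} directly.
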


Our last results are for variable exponent.
 \begin{theorem}\label{peso1}
  Suppose that assumptions $(a_1)$-$(a_3)$, $(\mathcal{K})$ hold. Let  $ q \in C^{+}(\overline{\Omega})$ and  $  \underline{\mathfrak{m}}_{1}^{-}\leqslant \underline{\mathfrak{m}}_{1}^{+}    <  p^{-}\leqslant p^{+} <\frac{N}{s}< \underline{q}^{-}\leqslant \underline{q}^{+}.$ Moreover, assume    $\mathfrak{a} \in L^{q(\cdot)}(\Omega)$ and that there exists      $ \Omega_{0}\subset \Omega$ a measurable set with nonempty interior and measure positive  such that $\mathfrak{a}(x) >0 $ for all  $ x \in \overline{\Omega}_{0}$. Then there exists $\lambda^{\star}>0$ such that any $\lambda \in (0, \lambda^{\star})$ is an eigenvalue of  problem \eqref{p1} in $\mathscr{W}$ whenever $\beta=0$. 
 \end{theorem}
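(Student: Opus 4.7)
The plan is to obtain the eigenfunction as a local minimizer of the Euler--Lagrange functional $\mathcal{J}_{\lambda,0}(u) = \Phi(u) - \lambda \mathcal{I}_{\mathfrak{a}}(u)$ inside a sufficiently small ball of $\mathscr{W}$, for $\lambda$ small. The key structural observation is that $\underline{\mathfrak{m}}_1^+ < p^-$, so the perturbation $\lambda \mathcal{I}_{\mathfrak{a}}$ is of strictly lower modular order than $\Phi$. Hence near the origin the functional is ``large positive plus small concave,'' and one expects a strict local minimum at some point inside a small ball, reached by Ekeland's variational principle.

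The first step is to establish the basic lower estimate on a small ball $\overline{B}_{\rho}\subset\mathscr{W}$. Using the lower bound $\mathcal{A}(t)t\geq c_{\mathcal{A}}|t|^{p(x,y)}$ from $(a_2)$ together with the kernel condition $(\mathcal{K})$, I would obtain $\Phi(u)\geq C_1\|u\|^{p^+}$ whenever $\|u\|\leq 1$, after passing through the modular-norm relation on $\mathscr{W}$. On the other hand, the H\"older inequality in variable-exponent Lebesgue spaces, combined with the continuous embedding $\mathscr{W}\hookrightarrow L^{\mathfrak{m}_1(\cdot)q'(\cdot)}(\Omega)$ — which holds because $\underline{\mathfrak{m}}_1^+<p^-<N/s<\underline{q}^-$ guarantees $\mathfrak{m}_1(x)q'(x)\leq p_s^\star(x)$ — yields $|\mathcal{I}_{\mathfrak{a}}(u)|\leq C_2\|\mathfrak{a}\|_{L^{q(\cdot)}(\Omega)}\|u\|^{\underline{\mathfrak{m}}_1^-}$ for small $\|u\|$. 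Combining the two estimates, $\mathcal{J}_{\lambda,0}(u)\geq C_1\|u\|^{p^+}-\lambda C_3\|u\|^{\underline{\mathfrak{m}}_1^-}$, and since $p^+>\underline{\mathfrak{m}}_1^-$ one can fix $\rho$ small and then define $\lambda^\star>0$ from these constants so that $\mathcal{J}_{\lambda,0}\geq 0$ on $\partial B_\rho$ for every $\lambda\in(0,\lambda^\star)$.

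Next I would exhibit a point in $B_\rho$ at which $\mathcal{J}_{\lambda,0}$ is strictly negative. Because $\mathfrak{a}>0$ on $\overline{\Omega}_0$ and $|\Omega_0|>0$, pick any nonzero $\varphi\in C_c^\infty(\Omega_0)\subset\mathscr{W}$ and evaluate $\mathcal{J}_{\lambda,0}(t\varphi)$ as $t\to 0^+$. The positive $\Phi(t\varphi)$ term is controlled by $t^{p^-}$, while $\lambda\mathcal{I}_{\mathfrak{a}}(t\varphi)$ dominates as $t^{\underline{\mathfrak{m}}_1^+}$ with a strictly positive coefficient; since $\underline{\mathfrak{m}}_1^+<p^-$, this forces $\mathcal{J}_{\lambda,0}(t\varphi)<0$ for $t>0$ small. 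Therefore $c_\lambda:=\inf_{\overline{B}_\rho}\mathcal{J}_{\lambda,0}<0$, and Ekeland's variational principle on the complete metric space $\overline{B}_\rho$ produces a sequence $\{u_n\}\subset\overline{B}_\rho$ with $\mathcal{J}_{\lambda,0}(u_n)\to c_\lambda<0$ and $\mathcal{J}_{\lambda,0}'(u_n)\to 0$ in $\mathscr{W}^*$, the infimum being attained in the interior $B_\rho$ since $\mathcal{J}_{\lambda,0}\geq 0$ on $\partial B_\rho$.

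The main obstacle — and where the bulk of the argument lies — is passing to the strong limit. The sequence $\{u_n\}$ is bounded, so $u_n\rightharpoonup u_0$ in $\mathscr{W}$ along a subsequence; the compact embedding of $\mathscr{W}$ into the relevant variable-exponent Lebesgue space (Subsection \ref{espacop}) gives $u_n\to u_0$ strongly in $L^{\mathfrak{m}_1(\cdot)}(\Omega)$, and with $\mathfrak{a}\in L^{q(\cdot)}(\Omega)$ one concludes $\langle\lambda\mathcal{I}_{\mathfrak{a}}'(u_n),u_n-u_0\rangle\to 0$. Feeding this together with $\mathcal{J}_{\lambda,0}'(u_n)(u_n-u_0)\to 0$ yields $\langle\Phi'(u_n),u_n-u_0\rangle\to 0$, and the $(S_+)$ property of $\Phi'$ (Lemma \ref{ll1}) then upgrades the weak convergence to $u_n\to u_0$ strongly in $\mathscr{W}$. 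Consequently $\mathcal{J}_{\lambda,0}(u_0)=c_\lambda<0$, so $u_0\neq 0$, and $u_0$ is a nontrivial critical point of $\mathcal{J}_{\lambda,0}$, i.e. an eigenfunction associated with $\lambda$. Since the argument is valid for every $\lambda\in(0,\lambda^\star)$, the whole interval consists of eigenvalues.
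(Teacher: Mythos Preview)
Your proposal is correct and follows essentially the same route as the paper: it establishes the local geometry via the lower bound $\Phi(u)\gtrsim\|u\|_{\mathscr{W}}^{p^+}$ against $|\mathcal{I}_{\mathfrak{a}}(u)|\lesssim\|u\|_{\mathscr{W}}^{\underline{\mathfrak{m}}_1^-}$ (Lemma~\ref{step2}), produces a negative value via a test function supported in $\Omega_0$ (Lemma~\ref{step3}), applies Ekeland's principle on $\overline{B_\rho}$, and closes with the $(S_+)$ property of $\Phi'$ from Lemma~\ref{ll1}. The only cosmetic difference is that the paper secures a strictly positive lower bound $\overline{\mathcal{R}}>0$ on the sphere rather than your $\geq 0$, and in the compactness step it makes explicit the exponent $\alpha(x)=q(x)\mathfrak{m}_1(x)/(q(x)-\mathfrak{m}_1(x))$ needed in the triple H\"older estimate for $\langle\mathcal{I}_{\mathfrak{a}}'(u_n),u_n-u_0\rangle$; both refinements are routine and your outline already points to them.
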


 \begin{theorem}\label{peso2}
  Suppose that assumptions $(a_1)$-$(a_3)$, $(\mathcal{K})$ hold.  Let  $ q \in C^{+}(\overline{\Omega})$,  $p^{-}\leqslant p^{+}< \underline{\mathfrak{m}}_{1}^{-}\leqslant \underline{\mathfrak{m}}_{1}^{+}$, and $\mathfrak{m}_{1}(x)<p^{\star}_{s}(x)$ for all $x \in \overline{\Omega}$. Moreover, let   $\mathfrak{a} \in L^{q(\cdot)}(\Omega)$  and  $ q(x)> \sup\, \Big \{ 1, \frac{Np(x)}{Np(x)+sp(x)\mathfrak{m}_{1}(x)-N\mathfrak{m}_{1}(x)} \Big\}$ for all $x \in \overline{\Omega}$. \\
  Then we have:
\begin{itemize}
\item[$1)$] There are $\lambda^{\star \star}$ and $\mu^{\star \star},$ positive and negative eigenvalue of  problem \eqref{p1}, respectively, satisfying $\mu^{\star \star} \leqslant\mu_{\star}<0< \lambda_{\star}\leqslant \lambda^{\star \star}$ in $\mathscr{W}$ whenever $\beta=0$. ($\lambda^{\star\star}$, $\mu^{\star \star}$, $\lambda_{\star}$ and $\mu_{\star}$ are defined in \eqref{definin})
\item[$2)$] $\lambda \in (-\infty, \mu^{\star \star})\cup(\lambda^{\star \star}, +\infty)$ is an eigenvalue of problem \eqref{p1}, while every $ \lambda \in (\mu_{\star}, \lambda_{\star})$ is not an eigenvalue in $\mathscr{W}$ whenever $\beta=0$.
\end{itemize}
\end{theorem}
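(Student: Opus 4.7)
The plan is to prove the two parts in succession, combining the direct method of the calculus of variations with a Lagrange multiplier argument for existence, and a modular inequality for the gap of non-eigenvalues. The exponent separation $p^{+}<\underline{\mathfrak{m}}_{1}^{-}$ together with the standing assumption on $q(\cdot)$, which is equivalent to $q'(x)\mathfrak{m}_{1}(x)<p^{\star}_{s}(x)$ on $\overline{\Omega}$, drives the whole argument: it yields the compact embedding $\mathscr{W}\hookrightarrow L^{q'(\cdot)\mathfrak{m}_{1}(\cdot)}(\Omega)$, so by H\"older in variable-exponent Lebesgue spaces the functional $\mathcal{I}_{\mathfrak{a}}$ is of class $C^{1}$ and sequentially weakly continuous on $\mathscr{W}$; meanwhile $\Phi$ is $C^{1}$, convex, coercive, and sequentially weakly lower semicontinuous by $(a_{1})$--$(a_{3})$, $(\mathcal{K})$, and Lemma~\ref{ll1}.

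For part $1)$, introduce $\mathcal{M}_{\pm}:=\{u\in\mathscr{W}:\mathcal{I}_{\mathfrak{a}}(u)=\pm 1\}$; both sets are nonempty since $\mathfrak{a}$ can change sign, and both are weakly sequentially closed by the weak continuity of $\mathcal{I}_{\mathfrak{a}}$. A minimizing sequence $(u_{n})\subset\mathcal{M}_{+}$ for $\Phi$ is bounded by coercivity, so (up to subsequence) $u_{n}\rightharpoonup u$ in $\mathscr{W}$, and the weak continuity of $\mathcal{I}_{\mathfrak{a}}$ places $u\in\mathcal{M}_{+}$; weak lower semicontinuity of $\Phi$ gives $\Phi(u)=\min_{\mathcal{M}_{+}}\Phi$, and the Lagrange multiplier rule produces $\lambda^{\star\star}\in\mathbb{R}$ with $\Phi'(u)=\lambda^{\star\star}\,\mathcal{I}_{\mathfrak{a}}'(u)$, i.e.\ $u$ is an eigenfunction. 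Testing against $u$ and invoking $(a_{2})$ forces $\lambda^{\star\star}>0$; the same scheme applied to $\mathcal{M}_{-}$ produces $\mu^{\star\star}<0$. The inequalities $\mu^{\star\star}\leq\mu_{\star}$ and $\lambda_{\star}\leq\lambda^{\star\star}$ will then be immediate once part~$2)$ is established, since $\lambda^{\star\star}$ is a concrete positive eigenvalue and $\lambda_{\star}$ is (by construction) a lower bound for every positive eigenvalue.

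For part $2)$, the non-existence claim is proved by testing the weak formulation against the candidate eigenfunction itself. For any eigenpair $(\lambda,u)$ with $u\neq 0$, setting $v=u$ in \eqref{weak} gives
\begin{equation*}
\int_{\mathbb{R}^{N}\times\mathbb{R}^{N}}\mathcal{A}(u(x)-u(y))(u(x)-u(y))K(x,y)\,dx\,dy=\lambda\int_{\Omega}\mathfrak{a}(x)|u|^{\mathfrak{m}_{1}(x)}\,dx.
\end{equation*}
Bounding the left side below by $c_{\mathcal{A}}$ times the Gagliardo $p(\cdot,\cdot)$-modular of $u$ via $(a_{2})$, and the right side above by $|\lambda|$ times a power of that same modular with exponent strictly larger than one (via H\"older with $\mathfrak{a}\in L^{q(\cdot)}$, the embedding into $L^{q'(\cdot)\mathfrak{m}_{1}(\cdot)}$, and the crucial separation $p^{+}<\underline{\mathfrak{m}}_{1}^{-}$), I extract the uniform bounds $\lambda\geq\lambda_{\star}>0$ on the positive branch and $\lambda\leq\mu_{\star}<0$ on the negative one, ruling out any eigenvalue in $(\mu_{\star},\lambda_{\star})$. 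To produce eigenvalues in $(\lambda^{\star\star},+\infty)$ and $(-\infty,\mu^{\star\star})$, I replace $\pm 1$ by $\pm c$ in the definition of $\mathcal{M}_{\pm}$, repeat the direct-method argument to get for each $c>0$ a Lagrange multiplier $\mu(c)$ that is an eigenvalue, and exploit the scaling mismatch between the $p(\cdot,\cdot)$-growth of $\Phi$ and the $\mathfrak{m}_{1}(\cdot)$-growth of $\mathcal{I}_{\mathfrak{a}}$, combined with $(a_{3})$, to show that $\mu(c)$ sweeps through $(\lambda^{\star\star},+\infty)$ as $c$ traverses $(0,+\infty)$, and analogously on the negative side.

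The main obstacle is the modular estimate giving $\lambda_{\star}>0$. Because $\mathscr{L}_{\mathcal{A}K}$ is nonhomogeneous and both $\mathscr{W}$ and the target space $L^{q'(\cdot)\mathfrak{m}_{1}(\cdot)}(\Omega)$ carry variable-exponent norm--modular relations that behave differently above and below the unit sphere, extracting a positive lower bound requires a careful four-case analysis tracking the size of each modular relative to $1$; it is precisely the strict inequality $p^{+}<\underline{\mathfrak{m}}_{1}^{-}$ that prevents the bound from collapsing to zero, and losing that strictness would erase the gap.
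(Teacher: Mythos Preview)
Your plan has the right architecture but two genuine gaps that would prevent it from proving the theorem as stated.

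\textbf{First gap: the Lagrange multiplier is not the paper's $\lambda^{\star\star}$.} The theorem fixes $\lambda^{\star\star}=\inf_{u\in\mathscr{W}^{+}}\Phi(u)/\mathcal{I}_{\mathfrak{a}}(u)$ in \eqref{definin} and asks you to show \emph{that} number is an eigenvalue. Minimizing $\Phi$ on $\mathcal{M}_{+}=\{\mathcal{I}_{\mathfrak{a}}=1\}$ does produce an eigenvalue via Lagrange multipliers, but there is no homogeneity here: $\Phi$ has $p(\cdot,\cdot)$-growth and $\mathcal{I}_{\mathfrak{a}}$ has $\mathfrak{m}_{1}(\cdot)$-growth, so you cannot rescale a minimizer of the quotient onto $\mathcal{M}_{+}$, and the multiplier you obtain need not equal $\inf \Phi/\mathcal{I}_{\mathfrak{a}}$. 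The paper instead minimizes the Rayleigh-type quotient directly over $\mathscr{W}^{+}$, shows the minimizing sequence is bounded (this requires knowing the quotient blows up both as $\|u\|_{\mathscr{W}}\to 0$ and as $\|u\|_{\mathscr{W}}\to\infty$), and passes to a nontrivial weak limit.

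\textbf{Second gap: your modular estimate does not give $\lambda_{\star}>0$.} Your four-case analysis controls $\int_{\Omega}\mathfrak{a}|u|^{\mathfrak{m}_{1}(x)}dx$ by $C\|u\|_{\mathscr{W}}^{\underline{\mathfrak{m}}_{1}^{\pm}}$ via H\"older and the embedding into $L^{q'(\cdot)\mathfrak{m}_{1}(\cdot)}$, while $\langle\Phi'(u),u\rangle\geq c_{\mathcal{A}}b_{0}\rho_{\mathscr{W}}(u)\geq c\|u\|_{\mathscr{W}}^{p^{\mp}}$. In the regime $\|u\|_{\mathscr{W}}\to\infty$ you then get a lower bound of order $\|u\|_{\mathscr{W}}^{p^{-}-\underline{\mathfrak{m}}_{1}^{+}}$, and since $p^{-}<\underline{\mathfrak{m}}_{1}^{+}$ this tends to zero, so the infimum defining $\lambda_{\star}$ is not forced positive. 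The strict inequality $p^{+}<\underline{\mathfrak{m}}_{1}^{-}$ saves you only near $\|u\|_{\mathscr{W}}=0$, not at infinity. The paper resolves this with a genuinely additional idea: choose $\eta$ with $\underline{\mathfrak{m}}_{1}^{+}-\tfrac{1}{2}<\eta<\underline{\mathfrak{m}}_{1}^{-}$, split $|u|^{\mathfrak{m}_{1}(x)}=|u|^{\eta}\cdot|u|^{\mathfrak{m}_{1}(x)-\eta}$, and apply H\"older with a carefully chosen auxiliary exponent $r(x)$ so that the resulting bound on $|\mathcal{I}_{\mathfrak{a}}(u)|$ has exponents $2(\underline{\mathfrak{m}}_{1}^{+}-\eta)<1<p^{-}$. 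This same estimate is what yields coercivity of $\mathcal{J}_{\lambda,0}$ in the paper's Step~3, which is how they obtain every $\lambda>\lambda^{\star\star}$ as an eigenvalue by unconstrained minimization --- a route that avoids your continuity/sweeping argument for $c\mapsto\mu(c)$, which as written is not justified in the nonhomogeneous setting.
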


\noindent\textbf{Contents of the paper:}
 In Section 2, we introduce some notation and preliminary results about Lebesgue and fractional Sobolev spaces with variable exponents. Besides, we define the function space for the problem \eqref{p1},  we will show an important result of compact and continuous embedding and we prove properties of Euler Lagrange functional associated to the problem \eqref{p1}. In Section 3, of form  constructive we prove the  Theorem \ref{con-convex 1} and   Theorem \ref{con-convex 2},  using constrained minimization in Nehari sets.
  Finally, in Section 4 using  Ekeland’s variational principle and the direct method of the calculus of variations  we prove the  Theorems  \ref{peso1} and \ref{peso2}.

 \section{Preliminaries}\label{sectio2}
 \hfill \break
 In this section, we review some notation and auxiliary results  for the Lebesgue and fractional Sobolev spaces with variable exponent which  will be useful throughout this paper to discuss the problem \eqref{p1}.
 
  The basic properties of the  Lebesgue spaces with variable exponent can be found in \cite{alves,dien,edf,fann,fan,radu} and references therein. Let $\Omega \subset \mathbb{R}^{N}$ be an bounded domain. Put
  $$C^{+}(\overline{\Omega})=\big\{h\in C(\overline{\Omega}): h(x)>1\mbox{ for all } x \in \overline{\Omega}\big\}$$
 and for all $h \in C^{+}(\overline{\Omega}) $, 
 we define $ \underline{h}^{-} := \inf_{x\in \overline{\Omega}}\,h(x) \mbox{ and } \underline{h}^{+} :=\sup_{x\in \overline{\Omega}}\,h(x).$
 
 For $h \in C^{+}(\overline{\Omega}) $,  the variable exponent Lebesgue space $L^{h(\cdot)}(\Omega)$  is defined by

 \begin{equation}\label{lp}
L^{h(\cdot)}(\Omega):=\Big\{u:\Omega \to \mathbb{R} \mbox{  measurable }: \exists\hspace{0.1cm} \zeta>0: \int_{\Omega}\Big|\frac{u(x)}{\zeta}\Big|^{h(x)}\, dx <+\infty \Big\}.
\end{equation} 
 We consider this space endowed with the so-called Luxemburg norm
 $$\|u\|_{L^{h(\cdot)}(\Omega)}:= \inf\Big\{\zeta>0:  \int_{\Omega}\Big|\frac{u(x)}{\zeta}\Big|^{h(x)}\, dx\leqslant 1 \Big\}.$$
 When  $p$ is constant, the Luxemburg norm $\|\cdot\|_{L^{h(\cdot)}(\Omega)}$ coincide with the standard norm $\|\cdot\|_{L^{h}(\Omega)}$ of the Lebesgue space $L^{h}(\Omega).$
\begin{proposition}\label{hold3}
\begin{itemize}
\item[$(a)$] The space $(L^{h(\cdot)}(\Omega), \|\cdot\|_{L^{h(\cdot)}(\Omega)})$ is a separable and reflexive Banach space;
\item[$(b)$]  Let $h_i \in C^{+}(\overline{\Omega})$ for $i=1,\ldots m$ with  $\sum_{i=1}^{m}\frac{1}{h_{i}(x)}=1$. If   $u_{i} \in L^{h_i(\cdot)}(\Omega)$, then
 \begin{equation*} 
\int_{\Omega}|u_{1}(x)\cdots u_m(x)|\,dx\leqslant C_{H}\|u_{1}\|_{ L^{h_1(\cdot)}(\Omega)}\cdots\|u_m\|_{ L^{h_m(\cdot)}(\Omega)}
\end{equation*}
where $C_{H}=\frac{1}{\underline{h}^{-}_{1}}+\frac{1}{\underline{h}^{-}_{2}}+\cdots+\frac{1}{\underline{h}^{-}_{m}}$.
\end{itemize}

\end{proposition}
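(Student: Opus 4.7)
The plan is to treat Proposition~\ref{hold3} as a collection of classical facts about variable exponent Lebesgue spaces, organized so that each item reduces to properties of the convex modular $\rho_h(u) := \int_\Omega |u(x)|^{h(x)}\,dx$. Throughout, I would emphasize the interplay between norm and modular given by $\|u\|_{L^{h(\cdot)}(\Omega)} \leqslant 1 \Leftrightarrow \rho_h(u) \leqslant 1$, which is the backbone of the whole theory.

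For part (a), I would first verify that $\|\cdot\|_{L^{h(\cdot)}(\Omega)}$ is a norm: positive homogeneity and definiteness are immediate from the definition, while the triangle inequality follows from the convexity of $t \mapsto t^{h(x)}$ and the fact that the unit ball $\{u : \rho_h(u) \leqslant 1\}$ is convex. Completeness would then follow by a standard Cauchy sequence argument: if $(u_n)$ is Cauchy in the Luxemburg norm, one extracts a subsequence with $\|u_{n_{k+1}} - u_{n_k}\| \leqslant 2^{-k}$, uses Fatou's lemma on the modular to pass to the almost-everywhere limit $u$, and concludes $u \in L^{h(\cdot)}(\Omega)$ with $u_{n_k} \to u$ in norm. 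For separability, the fact that $\underline{h}^+ < \infty$ gives a continuous embedding into $L^{\underline{h}^-}(\Omega)$, and a countable dense set can be produced from simple functions with rational coefficients supported on a countable generating family of measurable sets. For reflexivity, I would prefer the uniform convexity route: establish Clarkson-type inequalities for the modular (exploiting $1 < \underline{h}^- \leqslant \underline{h}^+ < \infty$), deduce uniform convexity of the norm, and invoke Milman--Pettis. All of these facts are classical and can be referenced to \cite{dien,edf,fan,radu}.

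For part (b), I would argue by induction on $m$, with the base case $m=2$ being the standard variable exponent Hölder inequality: given $u \in L^{h_1(\cdot)}(\Omega)$ and $v \in L^{h_2(\cdot)}(\Omega)$ with $\tfrac{1}{h_1(x)} + \tfrac{1}{h_2(x)} = 1$, one applies Young's inequality pointwise,
\begin{equation*}
|u(x)v(x)| \leqslant \frac{|u(x)|^{h_1(x)}}{h_1(x)} + \frac{|v(x)|^{h_2(x)}}{h_2(x)},
\end{equation*}
first on the normalized functions $u/\|u\|$ and $v/\|v\|$, then integrates and uses $\rho_{h_i}(u_i/\|u_i\|) \leqslant 1$ to arrive at the constant $\tfrac{1}{\underline{h}_1^-} + \tfrac{1}{\underline{h}_2^-}$. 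The general case follows by the same Young-inequality-plus-normalization scheme applied directly to $m$ factors: from $\sum_i \tfrac{1}{h_i(x)} = 1$ one gets
\begin{equation*}
\prod_{i=1}^m \frac{|u_i(x)|}{\|u_i\|_{L^{h_i(\cdot)}(\Omega)}} \leqslant \sum_{i=1}^m \frac{1}{h_i(x)} \left(\frac{|u_i(x)|}{\|u_i\|_{L^{h_i(\cdot)}(\Omega)}}\right)^{h_i(x)},
\end{equation*}
and integrating over $\Omega$, together with $\rho_{h_i}(u_i/\|u_i\|) \leqslant 1$ and $\tfrac{1}{h_i(x)} \leqslant \tfrac{1}{\underline{h}_i^-}$, yields the stated constant $C_H$.

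The main obstacle is reflexivity in part (a): proving uniform convexity for the Luxemburg norm is technically heavier than the other items, since the variable exponent version of Clarkson's inequalities requires separate estimates in the ranges $h(x) \geqslant 2$ and $1 < h(x) < 2$ and careful bookkeeping with the modular. The Hölder inequality in part (b) is comparatively elementary once the two-function case is in hand, the only delicate point being to ensure the constant is optimized as $\sum_i 1/\underline{h}_i^-$ rather than a larger quantity, which is precisely why the proof is carried out directly in $m$ variables via Young's inequality rather than by iterating the $m=2$ case.
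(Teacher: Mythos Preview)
Your proposal is correct and follows the standard route to these classical facts. Note, however, that the paper does not actually prove Proposition~\ref{hold3}: it is stated in the preliminaries as a known result, with the surrounding text referring the reader to \cite{alves,dien,edf,fann,fan,radu} for the basic properties of variable exponent Lebesgue spaces. Your sketch is essentially what one finds in those references---Luxemburg norm via the convex modular, completeness by Fatou, uniform convexity through Clarkson-type inequalities, and H\"older via pointwise Young on normalized functions---so there is no divergence in approach, only in level of detail: you supply the argument the paper chooses to cite.
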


 Let $h$ be a function in $C^{+}(\overline{\Omega})$. An important role in manipulating the generalized Lebesgue spaces is played by the $h(\cdot)$-modular of the  space $ L^{h(\cdot)}(\Omega)$, which is the convex
function $\rho_{h(\cdot)}: L^{h(\cdot)}(\Omega)\to \mathbb{R}$  defined by 
$$\rho_{h(\cdot)}(u)=\int_{\Omega}|u(x)|^{h(x)}dx,$$
 along any function  $u $ in  $ L^{h(\cdot)}(\Omega)$.\\
 
 The following result show relations between  the norm $\|\cdot\|_{L^{h(\cdot)}(\Omega)}$  and modular $\rho_{h(\cdot)}(\cdot)$.
 \begin{proposition}
\label{masmenos}
For $u\in L^{h(\cdot)}(\Omega) $ and $(u_{k})_{k\in \mathbb{N}}\subset L^{h(\cdot)}(\Omega),$ we have
\begin{itemize}
  \item[$(a)$]For $u \in L^{h(\cdot)}(\Omega)\setminus \{0 \}$, $\zeta = \|u\|_{L^{h(\cdot)}(\Omega) } $ if and only if $ \rho_{h(\cdot)}\big(\frac{u}{\zeta}\big)=1 $;
    \item[$(b)$]$\|u\|_{L^{h(\cdot)}(\Omega)}\geqslant 1\Rightarrow \|u\|_{L^{h(\cdot)}(\Omega) }^{\underline{h}^{-}}\leqslant \rho_{h(\cdot)}(u)\leqslant \|u\|_{L^{h(\cdot)}(\Omega) }^{\underline{h}^{+}};$
    \item[$(c)$] $\|u\|_{L^{h(\cdot)}(\Omega)}\leqslant 1\Rightarrow \|u\|_{L^{h(\cdot)}(\Omega) }^{\underline{h}^{+}}\leqslant \rho_{h(\cdot)}(u)\leqslant \|u\|_{L^{h(\cdot)}(\Omega) }^{\underline{h}^{-}};$ 
 \item[$(d)$] $\lim\limits_{k\to+\infty} \|u_{k}\|_{L^{h(\cdot)}(\Omega)}=0 \Leftrightarrow \lim\limits_{k\to+\infty} \rho_{h(\cdot)}(u_{k})=0;$
 \item[$(e)$] $\lim\limits_{k\to+\infty} \|u_{k}\|_{L^{h(\cdot)}(\Omega)}=+\infty \Leftrightarrow \lim\limits_{k\to+\infty} \rho_{h(\cdot)}( u_{k})=+\infty.$
\end{itemize}
\end{proposition}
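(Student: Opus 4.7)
\textbf{Proof plan for Proposition \ref{masmenos}.} The plan is to prove (a) first, since parts (b)--(e) then reduce to elementary scaling arguments combined with the definition of the modular.

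For (a), I would fix $u\in L^{h(\cdot)}(\Omega)\setminus\{0\}$ and study the real function $F(\zeta):=\rho_{h(\cdot)}(u/\zeta)=\int_\Omega |u(x)|^{h(x)}\zeta^{-h(x)}\,dx$ on $(0,\infty)$. The key analytic facts are: $F$ is strictly decreasing (since $h(x)>1$ on $\overline\Omega$ and $\zeta^{-h(x)}$ is strictly decreasing in $\zeta$), $F$ is continuous on $(0,\infty)$ by dominated convergence (using the monotone envelope $|u|^{h(x)}\zeta_0^{-h(x)}$ near any $\zeta_0$), $\lim_{\zeta\to 0^+}F(\zeta)=+\infty$ (use Fatou, together with $u\not\equiv 0$ and $h\geqslant\underline h^->1$), and $\lim_{\zeta\to\infty}F(\zeta)=0$ (dominated convergence with the same envelope). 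By the intermediate value theorem there is a unique $\zeta_0>0$ with $F(\zeta_0)=1$, and the set $\{\zeta>0:F(\zeta)\leqslant 1\}$ equals $[\zeta_0,\infty)$. Comparing with the definition of $\|u\|_{L^{h(\cdot)}(\Omega)}$ as the infimum of this set, I conclude $\zeta_0=\|u\|_{L^{h(\cdot)}(\Omega)}$, which is exactly (a).

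For (b) and (c), set $\zeta=\|u\|_{L^{h(\cdot)}(\Omega)}$ and use (a) to write $1=\int_\Omega |u(x)|^{h(x)}\zeta^{-h(x)}\,dx$. When $\zeta\geqslant 1$ the estimate $\zeta^{\underline h^-}\leqslant \zeta^{h(x)}\leqslant \zeta^{\underline h^+}$ holds pointwise, hence
\begin{equation*}
\zeta^{-\underline h^+}\rho_{h(\cdot)}(u)\leqslant 1 \leqslant \zeta^{-\underline h^-}\rho_{h(\cdot)}(u),
\end{equation*}
which rearranges to (b). When $0<\zeta\leqslant 1$ the inequalities $\zeta^{\underline h^+}\leqslant \zeta^{h(x)}\leqslant \zeta^{\underline h^-}$ flip the estimate and yield (c). (The trivial case $u=0$ is immediate.)

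For (d) and (e), I would use (b) and (c) to sandwich $\rho_{h(\cdot)}(u_k)$ between powers of $\|u_k\|_{L^{h(\cdot)}(\Omega)}$. If $\|u_k\|\to 0$, then eventually $\|u_k\|\leqslant 1$ and (c) gives $\rho_{h(\cdot)}(u_k)\leqslant \|u_k\|^{\underline h^-}\to 0$; conversely, if $\rho_{h(\cdot)}(u_k)\to 0$, then eventually $\rho_{h(\cdot)}(u_k)<1$, which by (b) forces $\|u_k\|<1$, and then (c) gives $\|u_k\|^{\underline h^+}\leqslant \rho_{h(\cdot)}(u_k)\to 0$, whence $\|u_k\|\to 0$. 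A symmetric argument handles (e): if $\|u_k\|\to\infty$, eventually $\|u_k\|\geqslant 1$ and $\rho_{h(\cdot)}(u_k)\geqslant \|u_k\|^{\underline h^-}\to\infty$, while if $\rho_{h(\cdot)}(u_k)\to\infty$, eventually $\rho_{h(\cdot)}(u_k)>1$ so $\|u_k\|>1$, and then $\|u_k\|^{\underline h^+}\geqslant \rho_{h(\cdot)}(u_k)\to\infty$. The main technical point throughout is (a); once the continuity and limit behavior of $F$ are in hand, everything else is bookkeeping with the pointwise bounds on $\zeta^{h(x)}$.
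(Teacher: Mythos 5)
Your proposal is correct. Note, however, that the paper itself offers no proof of Proposition \ref{masmenos}: it is quoted as one of the standard norm--modular relations for variable exponent Lebesgue spaces, with the reader referred to the literature (e.g. the books and papers of Diening et al., Fan--Zhao, R\v adulescu--Repov\v s cited in Section 2). Your argument is essentially the classical one from those sources: part (a) via the study of $F(\zeta)=\rho_{h(\cdot)}(u/\zeta)$ (strict monotonicity, continuity, limits $+\infty$ and $0$), then (b)--(c) by the pointwise bounds $\zeta^{\underline{h}^{-}}\leqslant\zeta^{h(x)}\leqslant\zeta^{\underline{h}^{+}}$ applied to the identity $\rho_{h(\cdot)}(u/\|u\|_{L^{h(\cdot)}(\Omega)})=1$, and (d)--(e) by sandwiching. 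The only point worth tightening is the continuity/limit step for $F$: the envelope $|u|^{h(x)}\zeta_{0}^{-h(x)}$ dominates only for $\zeta\geqslant\zeta_{0}$, so near a given $\zeta_{0}$ you should dominate by $|u|^{h(x)}(\zeta_{0}-\delta)^{-h(x)}$ with $0<\delta<\zeta_{0}$; its integrability (equivalently, finiteness of $\rho_{h(\cdot)}(u/\zeta)$ for every $\zeta>0$, not just for some $\zeta$ as in the definition of $L^{h(\cdot)}(\Omega)$) uses that $h$ is bounded, i.e. $\underline{h}^{+}<+\infty$, which holds here since $h\in C^{+}(\overline{\Omega})$ and $\overline{\Omega}$ is compact. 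With that remark your proof is complete and matches the standard treatment.
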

\begin{proposition}\label{binge}
Let $h_1\in L^{\infty}(\Omega)$  such that $1 \leqslant h_1(x)h_{2}(x)\leqslant+\infty$  for a.e. $x$ in $\Omega$. Let  $u \in L^{h_2(\cdot)}(\Omega)$ and $u\neq 0$. Then 
\begin{equation*} 
\begin{split}
\|u\|_{L^{h_1(\cdot)h_2(\cdot)}(\Omega)}&\leqslant 1  \Rightarrow  \|u\|_{L^{h_1(\cdot)h_2(\cdot)}(\Omega)}^{\underline{h}_{1}^{+}}\leqslant \||u|^{h_{1}(x)}\|_{L^{h_2(\cdot)}(\Omega)} \leqslant \|u\|_{L^{h_1(\cdot)h_2(\cdot)}(\Omega)}^{\underline{h}_{1}^{-}},
\\
\|u\|_{L^{h_1(\cdot)h_2(\cdot)}(\Omega)}&\geqslant 1  \Rightarrow  \|u\|_{L^{h_1(\cdot)h_2(\cdot)}(\Omega)}^{\underline{h}_{1}^{-}}\leqslant \||u|^{h_{1}(x)}\|_{L^{h_2(\cdot)}(\Omega)} \leqslant \|u\|_{L^{h_1(\cdot)h_2(\cdot)}(\Omega)}^{\underline{h}_{1}^{+}}.
\end{split}
\end{equation*}
\end{proposition}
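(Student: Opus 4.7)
The plan is to reduce everything to the modular characterization of the Luxemburg norm from Proposition~\ref{masmenos}(a), coupled with the trivial monotonicity of $t\mapsto \zeta^{t}$ in the exponent (increasing when $\zeta\geqslant 1$, decreasing when $\zeta\leqslant 1$). Concretely, set $\zeta:=\|u\|_{L^{h_1(\cdot)h_2(\cdot)}(\Omega)}>0$ (possible since $u\neq 0$); by Proposition~\ref{masmenos}(a) one has
\begin{equation*}
\rho_{h_1(\cdot)h_2(\cdot)}\!\left(\frac{u}{\zeta}\right)=\int_{\Omega}\frac{|u(x)|^{h_1(x)h_2(x)}}{\zeta^{h_1(x)h_2(x)}}\,dx=1. \qquad(\star)
\end{equation*}
Writing $v:=|u|^{h_1(\cdot)}\in L^{h_2(\cdot)}(\Omega)$ (which is the object whose norm we wish to control), I would observe that for every $\eta>0$,
\begin{equation*}
\rho_{h_2(\cdot)}\!\left(\frac{v}{\eta}\right)=\int_{\Omega}\frac{|u(x)|^{h_1(x)h_2(x)}}{\eta^{h_2(x)}}\,dx,
\end{equation*}
so that by choosing $\eta$ to be a power of $\zeta$, the factor $\eta^{h_2(x)}$ can be made to bracket $\zeta^{h_1(x)h_2(x)}$ pointwise, and the identity $(\star)$ transfers into modular inequalities.

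Assume first $\zeta\geqslant 1$, so $t\mapsto \zeta^{t}$ is non-decreasing. Since $\underline{h}_1^{-}\leqslant h_1(x)\leqslant \underline{h}_1^{+}$ a.e.\ in $\Omega$ (finite by $h_1\in L^{\infty}(\Omega)$), this gives $\zeta^{\underline{h}_1^{-} h_2(x)}\leqslant \zeta^{h_1(x)h_2(x)}\leqslant \zeta^{\underline{h}_1^{+} h_2(x)}$. Taking $\eta=\zeta^{\underline{h}_1^{+}}$ we obtain $\eta^{h_2(x)}\geqslant \zeta^{h_1(x)h_2(x)}$, and $(\star)$ yields $\rho_{h_2}(v/\eta)\leqslant 1$; by the definition of the Luxemburg norm this gives $\|v\|_{L^{h_2(\cdot)}}\leqslant \zeta^{\underline{h}_1^{+}}$. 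Taking instead $\eta=\zeta^{\underline{h}_1^{-}}$ gives the reverse pointwise inequality and $\rho_{h_2}(v/\eta)\geqslant 1$; combined with the monotonicity of $\lambda\mapsto \rho_{h_2}(v/\lambda)$ and Proposition~\ref{masmenos}(a), this forces $\|v\|_{L^{h_2(\cdot)}}\geqslant \zeta^{\underline{h}_1^{-}}$, which is the second chain in the statement.

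The case $\zeta\leqslant 1$ is entirely symmetric: now $t\mapsto \zeta^{t}$ is non-increasing, so the roles of $\underline{h}_1^{-}$ and $\underline{h}_1^{+}$ simply swap. Choosing $\eta=\zeta^{\underline{h}_1^{-}}$ produces $\eta^{h_2(x)}\geqslant \zeta^{h_1(x)h_2(x)}$, hence $\|v\|_{L^{h_2(\cdot)}}\leqslant \zeta^{\underline{h}_1^{-}}$; choosing $\eta=\zeta^{\underline{h}_1^{+}}$ produces the opposite comparison and therefore $\|v\|_{L^{h_2(\cdot)}}\geqslant \zeta^{\underline{h}_1^{+}}$. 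Combining these yields the first chain.

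There is no serious obstacle here; the only point requiring slight care is the implication \textquotedblleft$\rho_{h_2}(v/\eta)\geqslant 1\Rightarrow \|v\|_{L^{h_2(\cdot)}}\geqslant \eta$\textquotedblright, which is not literally part of Proposition~\ref{masmenos}(a) but follows from it by monotonicity: since $\lambda\mapsto \rho_{h_2}(v/\lambda)$ is strictly decreasing for $v\not\equiv 0$ and equals $1$ exactly at $\lambda=\|v\|_{L^{h_2(\cdot)}}$, any $\eta$ with $\rho_{h_2}(v/\eta)\geqslant 1$ must satisfy $\eta\leqslant \|v\|_{L^{h_2(\cdot)}}$. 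One should also note that $v\not\equiv 0$ since $u\neq 0$, and that the hypothesis $h_1 h_2\geqslant 1$ a.e.\ ensures $L^{h_1(\cdot)h_2(\cdot)}(\Omega)$ is a bona fide variable-exponent Lebesgue space, so $\zeta$ is well defined.
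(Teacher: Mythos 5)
Your argument is correct: it is the standard unit-ball-property proof — write $\zeta=\|u\|_{L^{h_1(\cdot)h_2(\cdot)}(\Omega)}$, use $\rho_{h_1(\cdot)h_2(\cdot)}(u/\zeta)=1$ from Proposition~\ref{masmenos}(a), compare $\zeta^{h_1(x)h_2(x)}$ pointwise with $\zeta^{\underline{h}_1^{\pm}h_2(x)}$ according to whether $\zeta\lessgtr 1$, and read off the Luxemburg norm of $|u|^{h_1(\cdot)}$ from the resulting modular bounds, including the correct monotonicity argument for the lower bounds. The paper itself offers no proof of Proposition~\ref{binge} (it is quoted from the standard variable-exponent literature), and your route coincides with the usual one there; the only caveats are cosmetic: the paper's hypothesis formally allows $h_1(x)h_2(x)=+\infty$ and writes $u\in L^{h_2(\cdot)}(\Omega)$, whereas your proof (like the paper's applications) tacitly works with a finite, bounded exponent and with $u\in L^{h_1(\cdot)h_2(\cdot)}(\Omega)$ so that $\zeta$ is finite, and $\underline{h}_1^{\pm}$ should be read as essential bounds since $h_1$ is only in $L^{\infty}(\Omega)$.
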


  \subsection{Fractional Sobolev spaces with variable exponents}\label{espacop}
  \hfill \break
 In this subsection, we introduce the fractional Sobolev spaces with variable exponent and some embedding results. The  properties for this space and  the results can be found in \cite{anaour,bahrouni,ky,kaufmann}.
 
 Let $\Omega \subset \mathbb{R}^{N}$ is a smooth bounded domain,  $s \in (0,1)$ and $q:\overline{\Omega}\to \mathbb{R}$,  $p:\overline{\Omega}\times\overline{\Omega}\to \mathbb{R}$  two continuous function. We consider that  
 \begin{equation}  \label{q10}\tag{$\mathit{q}_{1}$}
 \begin{split}
    &p \mbox{ is symmetric, }  \mbox{ this is, } p(x,y)=p(y,x),
   \\ &1<\underline{p}^{-}:=\inf_{(x,y)\in \overline{\Omega}\times\overline{\Omega} }\,p(x,y)\leqslant p(x,y)\leqslant\sup_{(x,y)\in \overline{\Omega}\times\overline{\Omega}}\,p(x,y):=\underline{p}^{+}<+\infty,
   \end{split}
\end{equation}
 and
 \begin{equation}\label{qq1} \tag{$\mathit{q}_{2}$}
    1<\underline{q}^{-}:=\inf_{x\in \overline{\Omega} }\,q(x)\leqslant q(x)\leqslant\sup_{x\in \overline{\Omega}}\,q(x):=\underline{q}^{+}<+\infty.
\end{equation}
We introduce the fractional Sobolev space with variable
exponents as follows:
\begin{equation*}
\begin{split}
    W^{s,q(\cdot),p(\cdot, \cdot)}(\Omega) 
    &:=\Big\{u\in L^{q(\cdot)}(\Omega): \int_{\Omega \times \Omega}\frac{|u(x)-u(y)|^{p(x,y)}}{\zeta^{p(x,y)}|x-y|^{N+sp(x,y)}}\, dx\,dy< +\infty, \mbox{ for some } \zeta>0 \Big\} 
\end{split}
\end{equation*}
 and we set
$$[u]^{s,p(\cdot,\cdot)}_{ \Omega}= \inf\Big\{ \zeta>0 : \int_{\Omega\times \Omega}\frac{|u(x)-u(y)|^{p(x,y)}}{\zeta^{p(x,y)}|x-y|^{N+sp(x,y)}} \,dx\,dy \leqslant 1 \Big\}$$
the  variable exponent Gagliardo seminorm.

 It is already known    that
$W^{s,q(\cdot),p(\cdot, \cdot)}(\Omega)$ is a separable and reflexive Banach space with the norm
 $$ \|u\|_{W^{s,q(\cdot), p(\cdot,\cdot)}(\Omega)}:=\|u\|_{L^{q(\cdot)}(\Omega)}+[u]_{\Omega}^{s,p(\cdot,\cdot)},$$
 see     \cite{azr,bahrouni,kaufmann}.
 \begin{remark}
 For brevity, when $q(x)=p(x,x)$ we denote $p(x)$ instead of $p(x, x)$ and  we will write $W^{s, p(\cdot, \cdot)}(\Omega)$ instead of $W^{s,p(\cdot), p(\cdot, \cdot)}(\Omega).$
 \end{remark}
 The next  result is an consequence of  \cite[Theorem 3.2]{ky}. 
\begin{corollary}\label{3.4a}
Let $\Omega\subset\mathbb{R}^{N}$  a smooth bounded  domain, $s\in(0,1)$, $p(x,y)$ and  $p(x)$ be continuous variable exponents such that \eqref{q10}-\eqref{qq1} be satisfied and that $ s\underline{p}^{+} < N$. Then, for all $r :\overline{\Omega}\rightarrow(1,+\infty)$  a continuous function such that
$p^{\star}_{s}(x)>r(x)$  for all $ x \in \overline{\Omega}$, 
 the space $W^{s, p(\cdot,\cdot)}(\Omega)$ is continuously and compactly embedding  in $L^{r(\cdot)}(\Omega)$.
\end{corollary}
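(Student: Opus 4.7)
The plan is to reduce the statement directly to Theorem~3.2 of~\cite{ky}, which already provides exactly this type of continuous and compact embedding of $W^{s,p(\cdot,\cdot)}(\Omega)$ into $L^{r(\cdot)}(\Omega)$, but under a \emph{uniform} subcriticality hypothesis on $r$. All the substantive variable-exponent machinery — the fractional Sobolev-type inequality and the Riesz--Fréchet--Kolmogorov-type compactness argument in $L^{r(\cdot)}$ — is already packaged in that reference, so my task is only to check that the hypotheses of the corollary put us inside the scope of that theorem.

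First I would verify that $(q_1)$--$(q_2)$ together with $s\underline{p}^{+}<N$ match the setting of~\cite[Thm.~3.2]{ky}: the exponent $p$ is continuous and symmetric on $\overline{\Omega}\times\overline{\Omega}$ and bounded into $(1,N/s)$ thanks to $s\underline{p}^{+}<N$; the diagonal $p(x)=p(x,x)$ is a well-defined element of $C^{+}(\overline{\Omega})$; and the fractional critical exponent $p^{\star}_{s}(x)=Np(x)/(N-sp(x))$ is then a positive continuous function on $\overline{\Omega}$. This is the exact framework in which~\cite{ky} works.

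The only real step is to upgrade the pointwise strict inequality $p^{\star}_{s}(x)>r(x)$ into a uniform one. Since $\overline{\Omega}$ is compact and both $p^{\star}_{s}$ and $r$ are continuous on it, the difference attains its minimum and
\[
\varepsilon \;:=\; \min_{x\in\overline{\Omega}}\bigl(p^{\star}_{s}(x)-r(x)\bigr) \;>\; 0.
\]
This uniform strict subcriticality is precisely the assumption under which Theorem~3.2 of~\cite{ky} is stated, so that theorem applies and yields both the continuity and the compactness of the embedding $W^{s,p(\cdot,\cdot)}(\Omega)\hookrightarrow L^{r(\cdot)}(\Omega)$, as claimed.

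The main (and in fact essentially only) potential obstacle is this pointwise-to-uniform passage. It is a routine compactness argument, but it is genuinely needed because the cited theorem is typically formulated under a uniform gap hypothesis rather than a merely pointwise one. No further work is required, since the analytic heart of the embedding is imported wholesale from~\cite{ky}; everything else here is bookkeeping on the hypotheses.
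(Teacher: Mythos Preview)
Your proposal is correct and matches the paper's approach exactly: the paper simply states that the corollary ``is a consequence of \cite[Theorem 3.2]{ky}'' with no further argument, and you have carried out precisely that reduction. Your added compactness step passing from the pointwise inequality $p^{\star}_{s}(x)>r(x)$ to a uniform gap is a reasonable piece of bookkeeping that the paper leaves implicit.
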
 
Now, we consider the space
\begin{equation*}
\begin{split}
  W^{s, p(\cdot,\cdot)}(\mathbb{R}^{N}):= \Big\{u\in L^{p(\cdot)}(\mathbb{R}^{N}): \int_{\mathbb{R}^{N} \times \mathbb{R}^{N}}\frac{|u(x)-u(y)|^{p(x,y)}}{\zeta^{p(x,y)}|x-y|^{N+sp(x,y)}} \,dx\,dy < +\infty, \mbox{ for some } \zeta>0 \Big\} 
\end{split}
\end{equation*}
where the space ${L^{p(\cdot)}(\mathbb{R}^{N})}$ is defined analogous the space ${L^{p(\cdot)}(\Omega)}$. The corresponding norm for this space  is
$$ \| u \|:= \| u \|_{L^{p(\cdot)}(\mathbb{R}^{N})} + [u]^{s, p(\cdot, \cdot)}_{\mathbb{R}^{N}}.$$ The space $ (W^{s, p(\cdot,\cdot)}(\mathbb{R}^{N}), \|\cdot\|)$  has the same properties that  $( W^{s, p(\cdot,\cdot)}(\Omega), \|\cdot\|_{W^{s, p(\cdot,\cdot)}(\Omega)})$, this is, it is  a  reflexive and separable Banach space.

 Now we define the space were will study the problem \eqref{p1}. Let we will consider   the variable exponents  $p(x):=p(x,x)$ for all $x \in \mathbb{R}^{N}$ with   $p \in C(\mathbb{R}^{N}\times \mathbb{R}^{N})$ satisfying \eqref{a23}  and we denote by  
 \begin{equation*}\label{space}
  \mathscr{W} = W^{s,p(\cdot, \cdot)}_{0}:= \{u \in W^{s, p(\cdot, \cdot)}(\mathbb{R}^{N}): u=0 \mbox{ a.e. in }   \mathbb{R}^{N}\setminus \Omega \}. 
 \end{equation*}
Note that $\mathscr{W}$ is a   closed subspace of $W^{s, p(\cdot, \cdot)}(\mathbb{R}^{N})$. Then,  $ \mathscr{W}$ is a reflexive and  separable Banach space with the norm
$$ \| u \|:= \| u \|_{L^{p(\cdot)}(\Omega)} + [u]^{s, p(\cdot, \cdot)}_{\mathbb{R}^{N}},$$ once the norms $\|\cdot\|_{L^{p(\cdot)}(\mathbb{R}^{N})}$  and $\|\cdot\|_{L^{p(\cdot)}(\Omega)}$  coincide in $ \mathscr{W} $. 

 
 The proofs for the next lemmas  under    will be referred to in Appendix \ref{apendice}.
 
\begin{lemma}\label{lw1}
Assume $\Omega$ be a smooth bounded domain in $\mathbb{R}^{N}$. Let  $p(x):=p(x,x)$ for all $x \in \mathbb{R}^{N}$ with $p \in C(\mathbb{R}^{N}\times \mathbb{R}^{N})$ satisfying \eqref{a23}  and $p^{\star}_{s}(x) > p(x)$ for $x \in \mathbb{R}^{N}$.   Then there exists $ \zeta_1>0$ such that
\begin{equation*}\label{w1}
\|u\|_{L^{p(\cdot)}(\Omega)}\leqslant \frac{1}{\zeta_1}[u]^{s, p(\cdot, \cdot)}_{\mathbb{R}^{N}}\mbox{ for all } \hspace{0.1cm} u \in \mathscr{W}.
\end{equation*}
\end{lemma}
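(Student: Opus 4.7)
The plan is to establish this Poincar\'e-type inequality by a contradiction-and-compactness argument. Assume no such $\zeta_1 > 0$ exists. Then for each $k \in \mathbb{N}$ there is $u_k \in \mathscr{W}\setminus\{0\}$ with $\|u_k\|_{L^{p(\cdot)}(\Omega)} > k\,[u_k]^{s,p(\cdot,\cdot)}_{\mathbb{R}^N}$. Setting $v_k := u_k/\|u_k\|_{L^{p(\cdot)}(\Omega)}$ produces a sequence in $\mathscr{W}$ satisfying $\|v_k\|_{L^{p(\cdot)}(\Omega)} = 1$ and $[v_k]^{s,p(\cdot,\cdot)}_{\mathbb{R}^N} < 1/k$. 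In particular $(v_k)$ is bounded in the reflexive Banach space $\mathscr{W}$.

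Since the Gagliardo modular over $\Omega\times\Omega$ is dominated by that over $\mathbb{R}^N\times\mathbb{R}^N$, one has $[v_k]^{s,p(\cdot,\cdot)}_{\Omega} \leqslant [v_k]^{s,p(\cdot,\cdot)}_{\mathbb{R}^N}$, so $(v_k)$ is also bounded in $W^{s,p(\cdot,\cdot)}(\Omega)$. Applying Corollary \ref{3.4a} with $r(x) = p(x)$ (legitimate because $p(x) < p^{\star}_s(x)$) yields a subsequence, still labelled $(v_k)$, with $v_k \to v$ strongly in $L^{p(\cdot)}(\Omega)$. By reflexivity one may also assume $v_k \rightharpoonup v$ weakly in $\mathscr{W}$; since $\mathscr{W}$ is a closed --- hence weakly closed --- subspace of $W^{s,p(\cdot,\cdot)}(\mathbb{R}^N)$, the limit belongs to $\mathscr{W}$, i.e.\ $v=0$ a.e.\ on $\mathbb{R}^N\setminus\Omega$, and the strong convergence gives $\|v\|_{L^{p(\cdot)}(\Omega)} = 1$.

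Because $u \mapsto [u]^{s,p(\cdot,\cdot)}_{\mathbb{R}^N}$ is a convex, continuous seminorm on $\mathscr{W}$, it is weakly lower semicontinuous, whence $[v]^{s,p(\cdot,\cdot)}_{\mathbb{R}^N} \leqslant \liminf_k [v_k]^{s,p(\cdot,\cdot)}_{\mathbb{R}^N} = 0$. Letting $\zeta\to 0^+$ in the definition of the Luxemburg seminorm and applying Fatou's lemma to the modular then forces $|v(x)-v(y)|^{p(x,y)} = 0$ for a.e.\ $(x,y)\in \mathbb{R}^N\times \mathbb{R}^N$, so $v$ is constant a.e.; combined with $v=0$ outside $\Omega$ this gives $v \equiv 0$, contradicting $\|v\|_{L^{p(\cdot)}(\Omega)} = 1$.

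The main obstacle is the last step --- showing that $[v]^{s,p(\cdot,\cdot)}_{\mathbb{R}^N} = 0$ forces $v$ to be constant a.e.\ in the variable exponent framework --- which requires a Fatou-type passage to the limit inside the nonhomogeneous modular, with care taken because $\zeta^{p(x,y)}$ depends on $(x,y)$. A secondary technical point is the weak lower semicontinuity of the variable exponent Gagliardo seminorm, which follows once the modular is known to be convex and continuous in the variable exponent Luxemburg topology.
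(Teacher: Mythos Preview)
Your proof is correct and follows essentially the same strategy as the paper's: normalize so that $\|v_k\|_{L^{p(\cdot)}(\Omega)}=1$, use the compact embedding of Corollary~\ref{3.4a} together with lower semicontinuity of the Gagliardo seminorm to pass to a limit $v$ with $\|v\|_{L^{p(\cdot)}(\Omega)}=1$ and $[v]^{s,p(\cdot,\cdot)}_{\mathbb{R}^N}=\zeta_1$, and then observe that $v\neq 0$ forces $\zeta_1>0$. The only cosmetic difference is that the paper phrases this as showing the infimum over $\{\|u\|_{L^{p(\cdot)}(\Omega)}=1\}$ is attained (using Fatou on the modular with a.e.\ convergence), whereas you argue by contradiction and invoke weak lower semicontinuity of the seminorm---but the mechanics are identical.
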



   \begin{lemma}\label{2.11}
Assume $\Omega$ be a smooth bounded domain in $\mathbb{R}^{N}$. Let $p(x):=p(x,x)$ for all $x \in \mathbb{R}^{N}$ with  $p \in C(\mathbb{R}^{N}\times \mathbb{R}^{N})$ satisfying \eqref{a23}  and $p^{\star}_{s}(x) > p(x)$ for $x \in \mathbb{R}^{N}$. 
Assume that $r:\overline{\Omega}\rightarrow (1, +\infty)$ is a continuous function. Thus, the space $(\mathscr{W}, \|\cdot\|_{\mathscr{W}})$  is continuously and compactly embedding   in $L^{r(\cdot)}(\Omega)$ for all $r(x) \in (1, p^{\star}_{s}(x))$ for all $x \in \overline{\Omega}$.
\end{lemma}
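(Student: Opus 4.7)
The plan is to reduce Lemma \ref{2.11} to Corollary \ref{3.4a} by composing the restriction map $\mathscr{W}\to W^{s,p(\cdot,\cdot)}(\Omega)$ with the embedding provided there. First I would show that the restriction operator $R:\mathscr{W}\to W^{s,p(\cdot,\cdot)}(\Omega)$, $Ru=u|_{\Omega}$, is a bounded linear map. Since any $u\in\mathscr{W}$ vanishes a.e.\ on $\mathbb{R}^{N}\setminus\Omega$, the modular identity
\begin{equation*}
\int_{\Omega}\Big|\frac{u(x)}{\zeta}\Big|^{p(x)}dx=\int_{\mathbb{R}^{N}}\Big|\frac{u(x)}{\zeta}\Big|^{p(x)}dx
\end{equation*}
gives $\|u\|_{L^{p(\cdot)}(\Omega)}=\|u\|_{L^{p(\cdot)}(\mathbb{R}^{N})}$ via the definition of the Luxemburg norm. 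Likewise, the inclusion $\Omega\times\Omega\subset\mathbb{R}^{N}\times\mathbb{R}^{N}$ and positivity of the integrand yield $[u]^{s,p(\cdot,\cdot)}_{\Omega}\leqslant [u]^{s,p(\cdot,\cdot)}_{\mathbb{R}^{N}}$, so that
\begin{equation*}
\|Ru\|_{W^{s,p(\cdot,\cdot)}(\Omega)}\leqslant \|u\|_{\mathscr{W}}.
\end{equation*}

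Next, I would verify that the hypotheses of Corollary \ref{3.4a} are met by the pair $(p(\cdot,\cdot),p(\cdot))$: condition \eqref{a23} implies both the symmetry and bounds required in \eqref{q10}, and \eqref{qq1} holds for $q(x)=p(x)$ with the same bounds; the condition $s\underline{p}^{+}<N$ is guaranteed by $p^{+}<N/s$ in \eqref{a23}, and the assumption $r(x)<p^{\star}_{s}(x)$ for all $x\in\overline{\Omega}$ is exactly what Corollary \ref{3.4a} requires. Consequently, the inclusion $J:W^{s,p(\cdot,\cdot)}(\Omega)\hookrightarrow L^{r(\cdot)}(\Omega)$ is continuous and compact.

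Finally, I would take the composition $J\circ R:\mathscr{W}\to L^{r(\cdot)}(\Omega)$. Continuity follows immediately from boundedness of both factors. For compactness, given a bounded sequence $(u_{k})\subset\mathscr{W}$, the bound $\|Ru_{k}\|_{W^{s,p(\cdot,\cdot)}(\Omega)}\leqslant \|u_{k}\|_{\mathscr{W}}$ says $(Ru_{k})$ is bounded in $W^{s,p(\cdot,\cdot)}(\Omega)$, so the compactness of $J$ yields a subsequence convergent in $L^{r(\cdot)}(\Omega)$. This delivers the compact embedding $\mathscr{W}\hookrightarrow L^{r(\cdot)}(\Omega)$.

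The only delicate point is verifying the equality $\|u\|_{L^{p(\cdot)}(\Omega)}=\|u\|_{L^{p(\cdot)}(\mathbb{R}^{N})}$ for $u\in\mathscr{W}$ (which the paper already records after defining $\mathscr{W}$) together with the non-trivial input that Corollary \ref{3.4a} is available—everything else is routine bookkeeping. Thus I expect no genuine obstacle: the lemma is a transplantation of \cite{ky}'s embedding from $W^{s,p(\cdot,\cdot)}(\Omega)$ to the zero-extension subspace $\mathscr{W}$.
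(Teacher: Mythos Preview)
Your approach coincides with the paper's: both factor the embedding as $\mathscr{W}\to W^{s,p(\cdot,\cdot)}(\Omega)\hookrightarrow L^{r(\cdot)}(\Omega)$, the second arrow being Corollary~\ref{3.4a}, and then pass a bounded sequence through this composition to extract a convergent subsequence. The one point to adjust is that in the paper $\|\cdot\|_{\mathscr{W}}$ denotes the Gagliardo seminorm $[u]^{s,p(\cdot,\cdot)}_{\mathbb{R}^{N}}$ alone (see the Remark following Lemma~\ref{2.11}), so your displayed bound $\|Ru\|_{W^{s,p(\cdot,\cdot)}(\Omega)}\leqslant \|u\|_{\mathscr{W}}$ does not follow from the two ingredients you list; the paper closes this by invoking Lemma~\ref{lw1} to control the $L^{p(\cdot)}(\Omega)$ piece, obtaining the constant $(1/\zeta_{1}+1)$ as in~\eqref{liim}.
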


\begin{remark}
From Lemma \ref{lw1} we will consider the space  $\mathscr{W}$ with norm $\|u\|_{\mathscr{W}}= [u]^{s, p(\cdot, \cdot)}_{\mathbb{R}^{N}}$.         Therefore, $(\mathscr{W}, \|\cdot\|_{\mathscr{W}}) $ is a reflexive and separable Banach space.
\end{remark}
  

 \noindent For all $u \in \mathscr{W} $ denoting 
 the  convex modular function $\rho_{\mathscr{W}}: \mathscr{W} \to \mathbb{R}$ defined by
  $$\displaystyle{\rho_{\mathscr{W}}(u)= \int_{\mathbb{R}^{N}\times \mathbb{R}^{N}}\frac{|u(x)-u(y)|^{p(x,y)}}{|x-y|^{N+sp(x,y)}} \,dx\,dy}.$$
The same way that the Proposition \ref{masmenos}, the following proposition has an important role in manipulating results regarding the relationship between the norm $\|\cdot\|_{\mathscr{W}}$   and the $\rho_{\mathscr{W}}$ convex modular function.
\begin{proposition}\label{lw0}
For $u\in \mathscr{W}$ and $(u_{k})_{k\in \mathbb{N}}\subset\mathscr{W}$, we have
\begin{itemize}
\item[$(a)$]For $u \in \mathscr{W}\setminus \{0 \}$, $\zeta = \|u\|_{\mathscr{W} } $ if and only if $ \rho_{\mathscr{W}}\big(\frac{u}{\zeta}\big)=1 $;
    \item[$(b)$]$\|u\|_{  \mathscr{W}}\geqslant 1\Rightarrow \|u\|_{ \mathscr{W}}^{p^{-}}\leqslant \rho_{ \mathscr{W}}(u)\leqslant \|u\|_{  \mathscr{W} }^{p^{+}};$
    \item[$(c)$] $\|u\|_{  \mathscr{W}}\leqslant 1\Rightarrow \|u\|_{  \mathscr{W}}^{p^{+}}\leqslant \rho_{ \mathscr{W}}(u)\leqslant \|u\|_{  \mathscr{W} }^{p^{-}};$ 
    \item[$(d)$] $\lim\limits_{k\to+\infty} \|u_{k}-u\|_{  \mathscr{W}}=0 \Leftrightarrow \lim\limits_{k\to+\infty} \rho_{ \mathscr{W}}(u_{k}-u)=0;$
 \item[$(e)$] $\lim\limits_{k\to+\infty} \|u_{k}\|_{  \mathscr{W}}=+\infty \Leftrightarrow \lim\limits_{k\to+\infty} \rho_{ \mathscr{W}}(u_{k})=+\infty.$
\end{itemize}
\end{proposition}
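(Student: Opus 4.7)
The proposition is the standard modular--norm dictionary for a Luxemburg-type norm built from a convex semi-modular, entirely analogous to Proposition \ref{masmenos}; the only novelty is that the modular $\rho_{\mathscr{W}}$ lives on the product space $\mathbb{R}^{N}\times\mathbb{R}^{N}$ and carries the variable exponent $p(x,y)$ rather than $p(x)$. My plan is the usual four-step cascade: first prove (a), deduce the two-sided modular estimates (b) and (c) from it, and then obtain the convergence characterizations (d) and (e) as sandwiching corollaries.

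For part (a), fix $u\in\mathscr{W}\setminus\{0\}$ and consider the auxiliary function $F(\zeta):=\rho_{\mathscr{W}}(u/\zeta)$ for $\zeta>0$. I would observe that the integrand
$$\frac{|u(x)-u(y)|^{p(x,y)}}{\zeta^{p(x,y)}|x-y|^{N+sp(x,y)}}$$
is pointwise strictly decreasing in $\zeta$, so $F$ is strictly decreasing; it is continuous by dominated convergence (for $\zeta$ ranging over any compact subset of $(0,\infty)$ the integrand is dominated by the integrable function corresponding to the smaller endpoint); and $F(\zeta)\to 0$ as $\zeta\to\infty$ while $F(\zeta)\to +\infty$ as $\zeta\to 0^{+}$ (the latter uses $u\not\equiv0$ and Fatou together with $p^{-}>1$). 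Hence the set $\{\zeta>0:F(\zeta)\leqslant 1\}$ is a closed half-line $[\zeta_{0},\infty)$ with $F(\zeta_{0})=1$, and by the very definition of the Luxemburg norm $\zeta_{0}=\|u\|_{\mathscr{W}}$. This is exactly (a).

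For parts (b) and (c), set $\zeta_{0}=\|u\|_{\mathscr{W}}$ so that by (a),
$$\int_{\mathbb{R}^{N}\times\mathbb{R}^{N}}\frac{|u(x)-u(y)|^{p(x,y)}}{\zeta_{0}^{p(x,y)}|x-y|^{N+sp(x,y)}}\,dx\,dy=1.$$
The key elementary inequality, driven by $p(x,y)\in[p^{-},p^{+}]$, is that if $\zeta_{0}\geqslant 1$ then $\zeta_{0}^{-p^{+}}\leqslant \zeta_{0}^{-p(x,y)}\leqslant \zeta_{0}^{-p^{-}}$, and if $\zeta_{0}\leqslant 1$ the two bounds reverse. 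Pulling the extreme power of $\zeta_{0}$ out of the integral and comparing with $1$ yields, in the first case, $\zeta_{0}^{p^{-}}\leqslant \rho_{\mathscr{W}}(u)\leqslant \zeta_{0}^{p^{+}}$, and in the second case $\zeta_{0}^{p^{+}}\leqslant \rho_{\mathscr{W}}(u)\leqslant \zeta_{0}^{p^{-}}$. This is (b) and (c).

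Parts (d) and (e) are then purely mechanical. For (d), if $\|u_{k}-u\|_{\mathscr{W}}\to 0$ then eventually $\|u_{k}-u\|_{\mathscr{W}}\leqslant 1$ and (c) gives $\rho_{\mathscr{W}}(u_{k}-u)\leqslant \|u_{k}-u\|_{\mathscr{W}}^{p^{-}}\to 0$; conversely, if $\rho_{\mathscr{W}}(u_{k}-u)\to 0$, then eventually $\rho_{\mathscr{W}}(u_{k}-u)<1$, which by (b) forces $\|u_{k}-u\|_{\mathscr{W}}<1$ (otherwise $\rho_{\mathscr{W}}(u_{k}-u)\geqslant \|u_{k}-u\|_{\mathscr{W}}^{p^{-}}\geqslant 1$), and then (c) gives $\|u_{k}-u\|_{\mathscr{W}}^{p^{+}}\leqslant\rho_{\mathscr{W}}(u_{k}-u)\to 0$. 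Statement (e) is handled symmetrically in the large-norm regime using (b). The only subtle ingredient in the whole argument is the continuity/monotonicity analysis of $F(\zeta)$ used in (a); everything else is bookkeeping.
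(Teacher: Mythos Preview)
Your proof is correct and is the standard argument for the norm--modular relationship in Luxemburg-type spaces. The paper does not actually supply a proof of Proposition~\ref{lw0}: it simply states the result as the counterpart of Proposition~\ref{masmenos} for the Gagliardo seminorm on $\mathbb{R}^{N}\times\mathbb{R}^{N}$, tacitly deferring to the well-known arguments in the references on variable-exponent Lebesgue spaces. Your write-up is exactly the proof one would give in that setting, transported to the product-space modular; there is nothing to add or correct.
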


 \begin{lemma} \label{ll1}
Assume that  $(a_{1})$-$(a_{3})$, and $(\mathcal{K})$ is  hold. We consider the functional $\Phi:\mathscr{W}\to \mathbb{R}$ defined by
\begin{equation*}
\Phi(u)= \int_{\mathbb{R}^{N}\times \mathbb{R}^{N}}\mathscr{A}(u(x)-u(y))K(x,y)\,dx\,dy  \mbox{ for all }\hspace{0.1cm} u \in \mathscr{W},
\end{equation*}
has the following  properties:
\begin{itemize}
\item[$(i)$] The functional $\Phi$ is well defined on $\mathscr{W}$, is of   class  $C^{1}(\mathscr{W}, \mathbb{R})$, and its G\^ateaux derivative is given by
\begin{equation*}\label{phi'}
\langle \Phi'(u), v \rangle = \int_{\mathbb{R}^{N}\times \mathbb{R}^{N}}\mathcal{A}(u(x)-u(y))(v(x)-v(y))K(x,y)\,dx\,dy \mbox{ for all } \hspace{0.1cm} u, v \in \mathscr{W}.
\end{equation*} 
\item[$(ii)$]  The functional $\Phi$ is weakly lower semicontinuous, that is, $u_k \rightharpoonup u$ in $\mathscr{W}$ as $ k \to +\infty$ implies that $\Phi(u) \leqslant\displaystyle{\liminf_{k\to +\infty} \Phi(u_k)}$. 
\item[$(iii)$] The  functional $\Phi' : \mathscr{W}\to \mathscr{W}'$ is an operator of type $(S_{+})$ on $\mathscr{W}$, that is, if 
  \begin{equation} \label{inffo}
  u_k \rightharpoonup u  \mbox{ in }\mathscr{W} \mbox{ and } \limsup_{k \to +\infty}\,\langle \,\Phi'(u_k), u_k-u \rangle\leqslant 0,
  \end{equation}
  then $u_k\to u$ in $\mathscr{W}$ as $k\to +\infty$.
\end{itemize}
\end{lemma}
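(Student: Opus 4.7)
The plan is to establish the three items in order, with increasing difficulty.

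\emph{For (i),} first I would use the bound $|\mathcal{A}(\tau)|\leqslant C_{\mathcal{A}}|\tau|^{p(x,y)-1}$ from $(a_2)$ and integrate to obtain $0\leqslant\mathscr{A}(t)\leqslant \frac{C_{\mathcal{A}}}{p(x,y)}|t|^{p(x,y)}$; combined with $K(x,y)|x-y|^{N+sp(x,y)}\leqslant b_1$ from $(\mathcal{K})$, this yields $\Phi(u)\leqslant \frac{b_1 C_{\mathcal{A}}}{p^{-}}\rho_{\mathscr{W}}(u)<+\infty$ for every $u\in\mathscr{W}$. Since $\mathcal{A}$ is odd by $(a_1)$ one checks $\mathscr{A}'(t)=\mathcal{A}(t)$ on all of $\mathbb{R}$, and the G\^ateaux derivative formula follows by differentiating under the integral sign, justified by dominated convergence with a dominant of the form $C(|U|+|V|)^{p(x,y)-1}|V|K(x,y)$ controlled by Young's inequality in the variable-exponent modular. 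For $\Phi\in C^{1}(\mathscr{W},\mathbb{R})$ I would show $\Phi':\mathscr{W}\to\mathscr{W}'$ is continuous by estimating $\|\Phi'(u_k)-\Phi'(u)\|_{\mathscr{W}'}$ through the variable-exponent H\"older inequality (Proposition \ref{hold3}(b)), reducing the problem to convergence of $\mathcal{A}(U_k)\to\mathcal{A}(U)$ in a weighted $L^{p'(\cdot,\cdot)}$-modular sense, which follows from a Vitali-type argument using a.e. convergence along a subsequence and equi-integrability provided by the growth bound.

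\emph{For (ii),} since $\mathscr{A}$ is convex by $(a_1)$ and $u\mapsto u(x)-u(y)$ is linear, the integrand $\mathscr{A}(u(x)-u(y))$ is a convex function of $u$ pointwise in $(x,y)$; as $K\geqslant 0$, integration preserves convexity, hence $\Phi$ is convex on $\mathscr{W}$. Combined with the continuity from (i), a standard Mazur-type argument gives weak lower semicontinuity.

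\emph{For (iii),} suppose $u_k\rightharpoonup u$ in $\mathscr{W}$ with $\limsup_k \langle \Phi'(u_k),u_k-u\rangle\leqslant 0$. Since $\Phi'(u)\in\mathscr{W}'$, weak convergence forces $\langle \Phi'(u),u_k-u\rangle\to 0$, so $\limsup_k \langle \Phi'(u_k)-\Phi'(u),u_k-u\rangle\leqslant 0$. Writing $U_k(x,y):=u_k(x)-u_k(y)$ and $U(x,y):=u(x)-u(y)$, strict convexity of $\mathscr{A}$ makes $\mathcal{A}$ strictly monotone, so the integrand $G_k(x,y):=[\mathcal{A}(U_k)-\mathcal{A}(U)](U_k-U)K(x,y)$ is nonnegative. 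Hence $G_k\to 0$ in $L^{1}(\mathbb{R}^{N}\times\mathbb{R}^{N})$ and, along a subsequence, a.e., which forces $U_k\to U$ a.e.

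To upgrade this to strong convergence I would first extract $\Phi(u_k)\to\Phi(u)$ from the convexity inequality $\Phi(u)\geqslant \Phi(u_k)+\langle \Phi'(u_k),u-u_k\rangle$ together with (ii). Using the two-sided comparison $\frac{c_{\mathcal{A}}}{p^{+}}|t|^{p(x,y)}\leqslant \mathscr{A}(t)\leqslant \frac{C_{\mathcal{A}}}{p^{-}}|t|^{p(x,y)}$ produced by $(a_2)$ and $(a_3)$, deduce $\rho_{\mathscr{W}}(u_k)\to \rho_{\mathscr{W}}(u)$. A Brezis-Lieb-type lemma for the weighted variable-exponent modular, together with the a.e. convergence $U_k\to U$, then gives $\rho_{\mathscr{W}}(u_k-u)\to 0$, and Proposition \ref{lw0}(d) yields $\|u_k-u\|_{\mathscr{W}}\to 0$. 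The main obstacle I anticipate is precisely this last step: the Brezis-Lieb decomposition must be carried out against the singular kernel $|x-y|^{-(N+sp(x,y))}$ with $(x,y)$-dependent exponent $p(x,y)$ and without the homogeneity available for the pure fractional $p(\cdot)$-Laplacian, so care is needed to rule out modular concentration along the diagonal and to handle the lack of scaling of $\mathscr{A}$.
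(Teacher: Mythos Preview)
Your treatment of (i) and (ii) is correct and essentially matches the paper's (the paper likewise derives weak lower semicontinuity from convexity together with $C^{1}$-regularity). For (iii) you follow the paper's route exactly up through $\lim_k\langle\Phi'(u_k)-\Phi'(u),u_k-u\rangle=0$, the pointwise a.e.\ convergence $U_k\to U$ of the difference quotients, and the identity $\Phi(u_k)\to\Phi(u)$ obtained from the convexity inequality and (ii). The divergence is at the final upgrade to strong convergence, and there your argument has a genuine gap: the two-sided comparison $\tfrac{c_{\mathcal{A}}}{p^{+}}|t|^{p(x,y)}\leqslant\mathscr{A}(t)\leqslant\tfrac{C_{\mathcal{A}}}{p^{-}}|t|^{p(x,y)}$ does \emph{not} allow you to deduce $\rho_{\mathscr{W}}(u_k)\to\rho_{\mathscr{W}}(u)$ from $\Phi(u_k)\to\Phi(u)$. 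Because the constants on the two sides differ (and $K$ is only comparable, not equal, to $|x-y|^{-N-sp(x,y)}$ by $(\mathcal{K})$), convergence of the $\Phi$-integral yields only two-sided bounds on $\rho_{\mathscr{W}}(u_k)$, not convergence; without $\rho_{\mathscr{W}}(u_k)\to\rho_{\mathscr{W}}(u)$ your Brezis--Lieb step for the modular has nothing to feed on.

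The paper bypasses this entirely with a midpoint-convexity trick. Setting
\[
g_k(x,y):=\Big[\tfrac{1}{2}\big(\mathscr{A}(U_k)+\mathscr{A}(U)\big)-\mathscr{A}\Big(\tfrac{U_k-U}{2}\Big)\Big]K(x,y)\geqslant 0,
\]
one has $g_k\to\mathscr{A}(U)K$ a.e., and Fatou's lemma combined with $\Phi(u_k)\to\Phi(u)$ gives
\[
\Phi(u)\leqslant\liminf_k\int g_k=\Phi(u)-\limsup_k\int\mathscr{A}\Big(\tfrac{U_k-U}{2}\Big)K\,dx\,dy,
\]
hence $\limsup_k\int\mathscr{A}\big(\tfrac{U_k-U}{2}\big)K\leqslant 0$; the lower growth bound from $(a_2)$, $(a_3)$, $(\mathcal{K})$ then forces $\rho_{\mathscr{W}}(u_k-u)\to 0$ directly, never passing through $\rho_{\mathscr{W}}(u_k)$. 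If you wish to salvage your own route, apply a Brezis--Lieb decomposition to $\Phi$ itself (using the a.e.\ convergence $U_k\to U$ and the growth bounds on $\mathscr{A}$ to obtain $\Phi(u_k)-\Phi(u_k-u)\to\Phi(u)$, hence $\Phi(u_k-u)\to 0$), and only then invoke the lower bound to reach the modular.
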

The proof of the  result above    will be referred to in Appendix \ref{apendice}.



  


\section{Proof of Theorems  \ref{con-convex 1} and  \ref{con-convex 2}}
   \hfill \break
 Now will show the    existence  of solution   to  problem \eqref{p1}   for constants exponents   with concave-convex nonlinearities and weight functions   $\mathfrak{a}, \mathfrak{b}:\overline{\Omega} \to \mathbb{R}$ that  are  sign-changing  in $\Omega$. In this case, the space $\mathscr{W}$  coincide with  the space  $W^{s,p}_{0}(\Omega):=\lbrace u \in W^{s,p}(\mathbb{R}^{N}): u=0 \mbox{  a.e. in }\mathbb{R}^{N} \setminus \Omega \}$ defined in \cite{ian1}, then  $\mathscr{W}=W^{s,p}_{0}(\Omega)$. We consider  $\mathcal{J}_{\lambda,1} $ the Euler Lagrange functional associated to  problem  \eqref{p1}. To proof  Theorem \ref{con-convex 1} and  Theorem \ref{con-convex 2} we will consider the Nehari manifold $\mathcal{N}_{\lambda,1}$ introduced in \cite{nehari},  the fibering map and the different “sign-subsets” of the Nehari set that will be used to find critical points of the  Euler Lagrange functional $\mathcal{J}_{\lambda,1} $. 


 \subsection{The Nehari Manifold}\label{nehari}
\hfill \break
The Nehari manifold associated to the functional $\mathcal{J}_{\lambda,1}$ is given by

\begin{equation}\label{3.1}
\begin{split}
\mathcal{N}_{\lambda,1} = & \Bigg\{ u \in W_{0}^{s,p}(\Omega)\setminus\{0\}: \langle\mathcal{J}'_{\lambda,1}(u),u\rangle=0\Bigg\}\\ =& \Bigg\{ u \in W_{0}^{s,p}(\Omega)\setminus\{0\}: \lambda\int_{\Omega}\mathfrak{a}(x)|u|^{\mathfrak{m}_1}\,dx+ \int_{\Omega}\mathfrak{b}(x)|u|^{\mathfrak{m}_{2}}\,dx  \\ &= \int_{\mathbb{R}^{N}\times \mathbb{R}^{N}}\mathcal{A}(u(x)-u(y))(u(x)-u(y))K(x,y)\,dx\,dy \Bigg\} .
\end{split}
\end{equation}
Note that when $u \in \mathcal{N}_{\lambda,1}$, by \eqref{3.1} we obtain
\begin{equation}\label{not1}
\begin{split}
\mathcal{J}_{\lambda,1}(u)=& \Phi(u) - \frac{1}{\mathfrak{m}_{1}}\int_{\mathbb{R}^{N}\times\mathbb{R}^{N}}\mathcal{A}(u(x)-u(y))(u(x)-u(y))K(x,y)\,dx\,dy \\&+\Bigg(\frac{1}{\mathfrak{m}_{1}}- \frac{1}{\mathfrak{m}_{2}}\Bigg)\int_{\Omega}\mathfrak{b}(x)|u|^{\mathfrak{m}_{2}}\,dx,
\end{split}
\end{equation} 
or it can be rewritten as
\begin{equation}\label{not2}
\begin{split}
\mathcal{J}_{\lambda,1}(u)=& \Phi(u) - \frac{1}{\mathfrak{m}_{2}}\int_{\mathbb{R}^{N}\times\mathbb{R}^{N}}\mathcal{A}(u(x)-u(y))(u(x)-u(y))K(x,y)\,dx\,dy\\&+\lambda\Bigg(\frac{1}{\mathfrak{m}_{2}}- \frac{1}{\mathfrak{m}_{1}}\Bigg)\int_{\Omega}\mathfrak{a}(x)|u|^{\mathfrak{m}_{1}}\,dx.
\end{split}
\end{equation}
The characterization above for the functional $\mathcal{J}_{\lambda,1}$   is relevant to results we will study the following.

As first step, we shall prove that $\mathcal{J}_{\lambda,1}$ is coercive and bounded below on $\mathcal{N}_{\lambda,1}\subset W_{0}^{s,p}(\Omega)$ which allows us to find a ground state solution that is a critical point for $\mathcal{J}_{\lambda,1}$.

\begin{proposition}\label{propo3.1}
The functional $\mathcal{J}_{\lambda,1}$  is coercive and bounded below on $\mathcal{N}_{\lambda,1}$.
\end{proposition}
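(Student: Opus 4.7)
The key idea is to exploit the alternative rewriting \eqref{not2} of $\mathcal{J}_{\lambda,1}$ on $\mathcal{N}_{\lambda,1}$, which cancels the troublesome $\mathfrak{b}$-term (whose exponent $\mathfrak{m}_{2}>p$ is supercritical for coercivity) and leaves only the subcritical weight $\mathfrak{a}$-term with exponent $\mathfrak{m}_{1}<p$. The ingredients are the structural hypotheses $(a_{3}')(i)$ on $\mathcal{A}$, the lower bound from $(a_{2})$ together with the kernel condition $(\mathcal{K})$, and the compact embedding from Lemma~\ref{2.11}.

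\textbf{Step 1: rewrite via \eqref{not2}.} For $u\in\mathcal{N}_{\lambda,1}$, I start from
\begin{equation*}
\mathcal{J}_{\lambda,1}(u)=\Phi(u)-\frac{1}{\mathfrak{m}_{2}}\langle\Phi'(u),u\rangle+\lambda\Bigl(\frac{1}{\mathfrak{m}_{2}}-\frac{1}{\mathfrak{m}_{1}}\Bigr)\int_{\Omega}\mathfrak{a}(x)|u|^{\mathfrak{m}_{1}}\,dx.
\end{equation*}

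\textbf{Step 2: lower bound on the $\Phi$-part.} By $(a_{3}')(i)$ one has $\mathcal{A}(t)t\leqslant p\,\mathscr{A}(t)$ for every $t\in\mathbb{R}$; integrating against $K(x,y)$ yields $\langle\Phi'(u),u\rangle\leqslant p\,\Phi(u)$. Since $p<\mathfrak{m}_{2}$ by \eqref{const}, this gives
\begin{equation*}
\Phi(u)-\tfrac{1}{\mathfrak{m}_{2}}\langle\Phi'(u),u\rangle \;\geqslant\; \Bigl(1-\tfrac{p}{\mathfrak{m}_{2}}\Bigr)\Phi(u) \;>\;0.
\end{equation*}
Next, combining $(a_{3}')(i)$ with $(a_{2})$ gives $\mathscr{A}(t)\geqslant\tfrac{1}{p}\mathcal{A}(t)t\geqslant\tfrac{c_{\mathcal{A}}}{p}|t|^{p}$, and then the kernel bound $(\mathcal{K})$ produces
\begin{equation*}
\Phi(u)\;\geqslant\;\frac{c_{\mathcal{A}}}{p}\int_{\mathbb{R}^{N}\times\mathbb{R}^{N}}|u(x)-u(y)|^{p}K(x,y)\,dx\,dy\;\geqslant\;\frac{c_{\mathcal{A}}\,b_{0}}{p}\,\|u\|_{\mathscr{W}}^{p},
\end{equation*}
using that in the constant-exponent case $\rho_{\mathscr{W}}(u)=\|u\|_{\mathscr{W}}^{p}$ by Proposition~\ref{lw0}.

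\textbf{Step 3: control of the $\mathfrak{a}$-term.} Since $\mathfrak{a}\in L^{\infty}(\Omega)$ and $\mathfrak{m}_{1}<p<p_{s}^{\star}$, Lemma~\ref{2.11} gives a constant $C_{1}>0$ with $\|u\|_{L^{\mathfrak{m}_{1}}(\Omega)}\leqslant C_{1}\|u\|_{\mathscr{W}}$, whence
\begin{equation*}
\Bigl|\int_{\Omega}\mathfrak{a}(x)|u|^{\mathfrak{m}_{1}}\,dx\Bigr|\;\leqslant\;\|\mathfrak{a}\|_{\infty}\,C_{1}^{\mathfrak{m}_{1}}\,\|u\|_{\mathscr{W}}^{\mathfrak{m}_{1}}.
\end{equation*}
Note that sign-changing of $\mathfrak{a}$ causes no trouble here: I only need an upper bound on the absolute value.

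\textbf{Step 4: conclusion.} Combining Steps 2 and 3, and writing $\kappa:=\lambda\bigl(\tfrac{1}{\mathfrak{m}_{1}}-\tfrac{1}{\mathfrak{m}_{2}}\bigr)>0$, I obtain constants $C_{2},C_{3}>0$ such that
\begin{equation*}
\mathcal{J}_{\lambda,1}(u)\;\geqslant\;C_{2}\,\|u\|_{\mathscr{W}}^{p}\;-\;C_{3}\,\lambda\,\|u\|_{\mathscr{W}}^{\mathfrak{m}_{1}}\qquad\text{for every }u\in\mathcal{N}_{\lambda,1}.
\end{equation*}
Since $\mathfrak{m}_{1}<p$, the right-hand side tends to $+\infty$ as $\|u\|_{\mathscr{W}}\to+\infty$ (coercivity) and attains a finite minimum over $[0,+\infty)$ (bounded below).

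\textbf{Main obstacle.} The only delicate point is choosing \emph{which} of the two representations, \eqref{not1} or \eqref{not2}, to feed into the estimate. Using \eqref{not1} would leave a term involving $\int\mathfrak{b}|u|^{\mathfrak{m}_{2}}$ with a \emph{supercritical} exponent $\mathfrak{m}_{2}>p$ and a sign-changing weight, which cannot be absorbed into $\Phi(u)$; the representation \eqref{not2} kills this term and keeps only the subcritical $\mathfrak{m}_{1}$-term. Everything else is a routine application of $(a_{2})$, $(a_{3}')$, $(\mathcal{K})$ and the embedding from Lemma~\ref{2.11}.
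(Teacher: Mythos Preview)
Your proof is correct and follows essentially the same route as the paper: start from the representation \eqref{not2} to eliminate the $\mathfrak{b}$-term, use $(a_{3}')(i)$, $(a_{2})$ and $(\mathcal{K})$ to bound $\Phi(u)-\tfrac{1}{\mathfrak{m}_{2}}\langle\Phi'(u),u\rangle$ below by a constant times $\|u\|_{\mathscr{W}}^{p}$, and control the $\mathfrak{a}$-integral via the continuous embedding $W^{s,p}_{0}(\Omega)\hookrightarrow L^{\mathfrak{m}_{1}}(\Omega)$. The only cosmetic difference is the order in which you apply the two halves of $(a_{3}')(i)$ (you bound $\langle\Phi'(u),u\rangle\leqslant p\,\Phi(u)$ first and then $\Phi(u)\geqslant\tfrac{c_{\mathcal{A}}b_{0}}{p}\|u\|_{\mathscr{W}}^{p}$, whereas the paper bounds $\Phi(u)\geqslant\tfrac{1}{p}\langle\Phi'(u),u\rangle$ first and then $\langle\Phi'(u),u\rangle\geqslant c_{\mathcal{A}}b_{0}\|u\|_{\mathscr{W}}^{p}$), but the resulting inequality and constants are identical.
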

\begin{proof}
For $u \in \mathcal{N}_{\lambda,1} $ using  \eqref{not2}, $(a_{2})$, $(a'_{3})$-$(i)$,  $(\mathcal{K})$, and \eqref{const}, we obtain
\begin{equation}\label{3.2}
\begin{split}
\mathcal{J}_{\lambda,1}(u) \geqslant & \Bigg( \frac{1}{p}-\frac{1}{\mathfrak{m}_{2}} \Bigg )  c_{\mathcal{A}}b_{0}\int_{\mathbb{R}^{N}\times\mathbb{R}^{N}}\frac{|u(x)-u(y)|^{p}}{|x-y|^{N+sp}}\,dx\,dy  +\lambda\Bigg(\frac{1}{\mathfrak{m}_{2}}- \frac{1}{\mathfrak{m}_{1}}\Bigg)\int_{\Omega}\mathfrak{a}(x)|u|^{\mathfrak{m}_{1}}\,dx.
\end{split}
\end{equation}
Now, from continuous embedding  $W^{s,p}_{0}(\Omega)\hookrightarrow L^{\mathfrak{m}_{1}}(\Omega)$,  $\mathfrak{a} \in L^{\infty}(\Omega)$, $\mathfrak{a}^{+}\not\equiv 0$, and  \eqref{const}, it follows that 
\begin{equation}\label{3.3}
\int_{\Omega}\mathfrak{a}(x)|u|^{\mathfrak{m}_{1}}\,dx \leqslant \|\mathfrak{a}^{+}\|_{\infty}\|u\|_{L^{\mathfrak{m}_{1}}}^{\mathfrak{m}_{1}} \leqslant \|\mathfrak{a}^{+}\|_{\infty}C_{\mathfrak{m}_1} ^{\mathfrak{m}_{1}(\Omega)}\|u\|_{W^{s,p}_{0}(\Omega)}^{\mathfrak{m}_{1}}.
\end{equation}
Then by \eqref{3.2} and \eqref{3.3}, we infer that\begin{equation*}
\begin{split}
\mathcal{J}_{\lambda,1}(u) \geqslant & \Bigg( \frac{1}{p}-\frac{1}{\mathfrak{m}_{2}} \Bigg )  c_{\mathcal{A}}b_{0}\|u\|_{W^{s,p}_{0}(\Omega)}^{p}  +\lambda\Bigg(\frac{1}{\mathfrak{m}_{2}}- \frac{1}{\mathfrak{m}_{1}}\Bigg)\|\mathfrak{a}^{+}\|_{\infty}C_{\mathfrak{m}_1} ^{\mathfrak{m}_{1}}\|u\|_{W^{s,p}_{0}(\Omega)}^{\mathfrak{m}_{1}}.
\end{split}
\end{equation*}
Therefore, since $p> \mathfrak{m}_{1}$ $\mathcal{J}_{\lambda,1}$ is coercive and consequently  $\mathcal{J}_{\lambda,1}$ is bounded below on $\mathcal{N}_{\lambda,1}$.

\end{proof}

Let us introduce the fibering maps associated to the functional $\mathcal{J}_{\lambda,1}$.  For every fixed $u \in W^{s,p}_{0}(\Omega)\setminus\{0\}$,  we will define the fibering map $\wp_{u}: (0, +\infty) \to \mathbb{R}$  by
\begin{equation*}
\wp_{u}(t):=\mathcal{J}_{\lambda,1}(tu)= \Phi(tu) - \frac{\lambda t^{\mathfrak{m}_{1}}}{\mathfrak{m}_1}\int_{\Omega}\mathfrak{a}(x)|u|^{\mathfrak{m}_1}\,dx- \frac{t^{\mathfrak{m}_{2}}}{\mathfrak{m}_{2}}\int_{\Omega}\mathfrak{b}(x)|u|^{\mathfrak{m}_{2}}\,dx \mbox{ for all } t \in (0, +\infty).
\end{equation*}

Our objective is we will analyze the behavior the fibering maps and show its relation with the Nehari manifold. More specifically as fibering maps are considered together with the Nehari manifold in order to ensure the existence of critical points for $\mathcal{J}_{\lambda,1}$.  In particular, for concave-convex nonlinearities,  knowledge  the geometry for $\wp_{u}$ is important, see for instance \cite{brown}.

 Furthermore,  using again arguing  as in the  Lemma \ref{ll1} and   standard arguments, we conclude that $\wp_{u}$ is of class $C^{1}(\mathbb{R}^{+}, \mathbb{R})$.  Then differentiating  $\wp_{u}(t)$ with respect to $t$, we obtain
\begin{equation} \label{3.4}
\begin{split}
\wp'_{u}(t)=  & \int_{\mathbb{R}^{N}\times \mathbb{R}^{N}}\mathcal{A}(tu(x)-tu(y))(u(x)-u(y))K(x,y)\,dx\,dy \\ &-\lambda^{\mathfrak{m}_{1}-1}\int_{\Omega}\mathfrak{a}(x)|u|^{\mathfrak{m}_{1}}\,dx  -\lambda^{\mathfrak{m}_{2}-1}\int_{\Omega}\mathfrak{b}(x)|u|^{\mathfrak{m}_{2}}\,dx.
\end{split}
\end{equation}

 Therefore, $ tu\in \mathcal{N}_{\lambda,1} $ if and only if $\wp_{u}'(t)=0$. In particular, $ u\in \mathcal{N}_{\lambda,1} $ if and only if $\wp_{u}'(1)=0$. In other words, it is sufficient to find stationary
points of fibering maps in order to get critical points for $\mathcal{J}_{\lambda,1}$ on $ \mathcal{N}_{\lambda,1} $.

Furthermore, again arguing  as in the  Lemma \ref{ll1} and   standard arguments, we  have that $\wp_{u}$ is of class $C^{2}(\mathbb{R}^{+}, \mathbb{R})$ with
second derivative given by
\begin{equation} \label{3.5}
\begin{split}
\wp''_{u}(t)=  & \int_{\mathbb{R}^{N}\times \mathbb{R}^{N}}\mathcal{A}'(tu(x)-tu(y))(u(x)-u(y))^{2}K(x,y)\,dx\,dy \\ &-\lambda t^{\mathfrak{m}_{1}-2}(\mathfrak{m}_{1}-1)\int_{\Omega}\mathfrak{a}(x)|u|^{\mathfrak{m}_{1}}\,dx  -t^{\mathfrak{m}_{2}-2}(\mathfrak{m}_{2}-1)\int_{\Omega}\mathfrak{b}(x)|u|^{\mathfrak{m}_{2}}\,dx.
\end{split}
\end{equation}
Thus, as $\wp''_{u} \in C^{2}(\mathbb{R}^{+}, \mathbb{R})$  it is natural to divide $ \mathcal{N}_{\lambda,1} $ into three sets as was pointed by \cite{brown1,brown}:
\begin{equation}\label{neharispace}
\begin{split}
 &\mathcal{N}_{\lambda,1}^{+}= \{ u \in \mathcal{N}_{\lambda,1}; \wp''_{u}(1) >0 \};
 \\ &\mathcal{N}_{\lambda,1}^{-}= \{ u \in \mathcal{N}_{\lambda,1}; \wp''_{u}(1) <0 \}; \\  &\mathcal{N}_{\lambda,1}^{0}= \{ u \in \mathcal{N}_{\lambda,1}; \wp''_{u}(1)=0 \}.
\end{split}
\end{equation}
Here we mention that $\mathcal{N}_{\lambda,1}^{+}, \mathcal{N}_{\lambda,1}^{-},$ and $\mathcal{N}_{\lambda,1}^{0}$ correspond to critical points of minimum,
maximum and inflexions points, respectively of $\wp_{u}.$
\begin{remark}\label{remark2}
Note that if $u \in \mathcal{N}_{\lambda,1}$, then  by \eqref{3.4} and \eqref{3.5}, we obtain
\begin{equation*}
\begin{split}
\wp''_{u}(1)= & (\mathfrak{m}_{1}-\mathfrak{m}_{2})\int_{\Omega}\mathfrak{b}(x)|u|^{\mathfrak{m}_{2}}\,dx +  \int_{\mathbb{R}^{N}\times \mathbb{R}^{N}}\mathcal{A}'(u(x)-u(y))(u(x)-u(y))^{2}K(x,y)\,dx\,dy \\ & +(1-\mathfrak{m}_{1})\int_{\mathbb{R}^{N}\times \mathbb{R}^{N}}\mathcal{A}(u(x)-u(y))(u(x)-u(y))K(x,y)\,dx\,dy \\ =& \lambda(\mathfrak{m}_{2}-\mathfrak{m}_{1})\int_{\Omega}\mathfrak{a}(x)|u|^{\mathfrak{m}_{1}}\,dx + \int_{\mathbb{R}^{N}\times \mathbb{R}^{N}}\mathcal{A}'(u(x)-u(y))(u(x)-u(y))^{2}K(x,y)\,dx\,dy \\ & +(1-\mathfrak{m}_{2})\int_{\mathbb{R}^{N}\times \mathbb{R}^{N}}\mathcal{A}(u(x)-u(y))(u(x)-u(y))K(x,y)\,dx\,dy. \\ &  
\end{split}
\end{equation*}
\end{remark}
\begin{lemma}\label{lem3.1}
For each $\lambda > 0  $ sufficiently small, we have that
\begin{itemize}
\item[$(1)$] $\mathcal{N}_{\lambda,1}^{0}= \emptyset$;
\item[$(2)$]$\mathcal{N}_{\lambda,1}=\mathcal{N}_{\lambda,1}^{+}\cup \mathcal{N}_{\lambda,1}^{-}$ is a $C^{1}$-manifold.
\end{itemize}
\end{lemma}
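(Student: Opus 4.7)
The plan is to prove part $(1)$ by contradiction, combining the defining equations $\wp'_u(1)=0$ and $\wp''_u(1)=0$ with the two-sided control of $\mathcal{A}'(t)t^{2}$ by $\mathcal{A}(t)t$ afforded by $(a'_{3})$-$(ii)$. Writing $A(u)$ and $A'(u)$ for the two kernel integrals appearing in \eqref{3.4} and \eqref{3.5}, a point $u\in\mathcal{N}^{0}_{\lambda,1}$ satisfies
\begin{equation*}
A(u)=\lambda\!\int_{\Omega}\!\mathfrak{a}(x)|u|^{\mathfrak{m}_{1}}\,dx+\!\int_{\Omega}\!\mathfrak{b}(x)|u|^{\mathfrak{m}_{2}}\,dx,\qquad A'(u)=\lambda(\mathfrak{m}_{1}-1)\!\int_{\Omega}\!\mathfrak{a}(x)|u|^{\mathfrak{m}_{1}}\,dx+(\mathfrak{m}_{2}-1)\!\int_{\Omega}\!\mathfrak{b}(x)|u|^{\mathfrak{m}_{2}}\,dx.
\end{equation*}
Eliminating $\int\mathfrak{b}|u|^{\mathfrak{m}_{2}}$ and $\int\mathfrak{a}|u|^{\mathfrak{m}_{1}}$ respectively from these two identities and inserting the sandwich $(l-1)A(u)\le A'(u)\le (m-1)A(u)$ yields
\begin{equation*}
\frac{l-\mathfrak{m}_{1}}{\mathfrak{m}_{2}-\mathfrak{m}_{1}}A(u)\ \le\ \int_{\Omega}\mathfrak{b}(x)|u|^{\mathfrak{m}_{2}}\,dx\ \le\ \frac{m-\mathfrak{m}_{1}}{\mathfrak{m}_{2}-\mathfrak{m}_{1}}A(u),\qquad \frac{\mathfrak{m}_{2}-m}{\mathfrak{m}_{2}-\mathfrak{m}_{1}}A(u)\ \le\ \lambda\int_{\Omega}\mathfrak{a}(x)|u|^{\mathfrak{m}_{1}}\,dx\ \le\ \frac{\mathfrak{m}_{2}-l}{\mathfrak{m}_{2}-\mathfrak{m}_{1}}A(u).
\end{equation*}
Because \eqref{const} guarantees $l>\mathfrak{m}_{1}$ and $\mathfrak{m}_{2}>m$, every constant above is strictly positive, so in particular $\int\mathfrak{a}|u|^{\mathfrak{m}_{1}}>0$ and $\int\mathfrak{b}|u|^{\mathfrak{m}_{2}}>0$ automatically (this is where the sign-changing weights are tamed).

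Next I bring in the quantitative estimates from $(a_{2})$, $(\mathcal{K})$, and the continuous embeddings $W^{s,p}_{0}(\Omega)\hookrightarrow L^{\mathfrak{m}_{1}}(\Omega),L^{\mathfrak{m}_{2}}(\Omega)$ supplied by Corollary \ref{3.4a}/Lemma \ref{2.11}: namely $c_{\mathcal{A}}b_{0}\|u\|^{p}\le A(u)$, $\int\mathfrak{a}|u|^{\mathfrak{m}_{1}}\le C_{a}\|u\|^{\mathfrak{m}_{1}}$, and $\int\mathfrak{b}|u|^{\mathfrak{m}_{2}}\le C_{b}\|u\|^{\mathfrak{m}_{2}}$, where the norm is the fractional Gagliardo seminorm on $W^{s,p}_{0}(\Omega)$. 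Plugging these into the lower bound for $\int\mathfrak{b}|u|^{\mathfrak{m}_{2}}$ gives the $\lambda$-independent lower bound
\begin{equation*}
\|u\|\ \ge\ c_{0}:=\left(\frac{c_{\mathcal{A}}b_{0}(l-\mathfrak{m}_{1})}{C_{b}(\mathfrak{m}_{2}-\mathfrak{m}_{1})}\right)^{\!1/(\mathfrak{m}_{2}-p)}>0,
\end{equation*}
while the lower bound for $\lambda\int\mathfrak{a}|u|^{\mathfrak{m}_{1}}$ produces the $\lambda$-dependent upper bound
\begin{equation*}
\|u\|\ \le\ C_{1}\,\lambda^{1/(p-\mathfrak{m}_{1})}.
\end{equation*}
Since $p>\mathfrak{m}_{1}$ and $\mathfrak{m}_{2}>p$, these two inequalities are incompatible for all sufficiently small $\lambda>0$, say $\lambda<\tilde{\lambda}:=(c_{0}/C_{1})^{p-\mathfrak{m}_{1}}$. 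This contradicts $u\in\mathcal{N}^{0}_{\lambda,1}$ and establishes $(1)$.

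For $(2)$, part $(1)$ immediately gives the decomposition $\mathcal{N}_{\lambda,1}=\mathcal{N}^{+}_{\lambda,1}\cup\mathcal{N}^{-}_{\lambda,1}$. To see that $\mathcal{N}_{\lambda,1}$ is a $C^{1}$-manifold I set $F(u):=\langle \mathcal{J}'_{\lambda,1}(u),u\rangle$ on $\mathscr{W}\setminus\{0\}$; the hypothesis $(a'_{3})$ makes $\mathcal{J}_{\lambda,1}\in C^{2}$ and hence $F\in C^{1}$. A direct computation shows $\langle F'(u),u\rangle=\langle \mathcal{J}''_{\lambda,1}(u)u,u\rangle + \langle \mathcal{J}'_{\lambda,1}(u),u\rangle=\wp''_{u}(1)$ on $\mathcal{N}_{\lambda,1}$, which is nonzero by part $(1)$. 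Therefore $0$ is a regular value of $F$, and the implicit function theorem yields the $C^{1}$-submanifold structure of $\mathcal{N}_{\lambda,1}=F^{-1}(0)\setminus\{0\}$ in $W^{s,p}_{0}(\Omega)$. The main obstacle is the bookkeeping in the first paragraph: one must carefully exploit both sides of $(a'_{3})$-$(ii)$ so that sign-changing contributions from $\mathfrak{a}$ and $\mathfrak{b}$ are absorbed by the strictly positive spectral gaps $l-\mathfrak{m}_{1}$ and $\mathfrak{m}_{2}-m$ furnished by \eqref{const}.
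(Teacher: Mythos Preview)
Your proof is correct and follows essentially the same approach as the paper. For part~$(1)$, the paper likewise eliminates the weighted integrals using Remark~\ref{remark2} together with $(a'_{3})$-$(ii)$ to obtain $\|u\|^{\mathfrak{m}_{2}-p}\geqslant C_{1}$ from the $\mathfrak{b}$-inequality and then a uniform lower bound on $\lambda$ from the $\mathfrak{a}$-inequality (you phrase this last step equivalently as an upper bound $\|u\|\leqslant C_{1}\lambda^{1/(p-\mathfrak{m}_{1})}$); for part~$(2)$, the paper introduces the same functional $\mathsf{G}_{\lambda}(u)=\langle\mathcal{J}'_{\lambda,1}(u),u\rangle$ and computes $\langle\mathsf{G}'_{\lambda}(u),u\rangle=\wp''_{u}(1)\neq 0$ on $\mathcal{N}_{\lambda,1}$.
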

\begin{proof}
 $(1)$ We suppose that $\mathcal{N}_{\lambda,1}^{0}\neq \emptyset$. Let $u \in \mathcal{N}_{\lambda,1}^{0}$ be a fixed function. Thus, $\wp''_{u}(1)=\wp'_{u}(1)=0$. Using Remark \ref{remark2}, $(a_2)$, $(a'_{3})$-$(ii)$, $(\mathcal{K})$, and \eqref{const}, we obtain
\begin{equation}\label{3.44}
\begin{split}
 (\mathfrak{m}_{2}-\mathfrak{m}_{1})\int_{\Omega}\mathfrak{b}(x)|u|^{\mathfrak{m}_{2}}\,dx = & \int_{\mathbb{R}^{N}\times \mathbb{R}^{N}}\mathcal{A}'(u(x)-u(y))(u(x)-u(y))^{2}K(x,y)\,dx\,dy  \\ & +(1-\mathfrak{m}_{1})\int_{\mathbb{R}^{N}\times \mathbb{R}^{N}}\mathcal{A}(u(x)-u(y))(u(x)-u(y))K(x,y)\,dx\,dy \\ \geqslant  & (l-\mathfrak{m}_{1})c_{\mathcal{A}}b_0 \|u\|^{p}_{W^{s,p}_{0}(\Omega)}.
 \end{split}
\end{equation}
Now, from continuous embedding  $W^{s,p}_{0}(\Omega)\hookrightarrow L^{\mathfrak{m}_{2}}(\Omega)$, $\mathfrak{b} \in L^{\infty}(\Omega)$,  $\mathfrak{b}^{+}\not\equiv 0$, and  \eqref{const},  it follows that 
\begin{equation}\label{3.31}
\int_{\Omega}\mathfrak{b}(x)|u|^{\mathfrak{m}_{2}}\,dx \leqslant \|\mathfrak{b}^{+}\|_{\infty}\|u\|_{L^{\mathfrak{m}_{2}}}^{\mathfrak{m}_{2}} \leqslant \|\mathfrak{b}^{+}\|_{\infty}C_{\mathfrak{m}_2} ^{\mathfrak{m}_{2}(\Omega)}\|u\|_{W^{s,p}_{0}(\Omega)}^{\mathfrak{m}_{2}}.
\end{equation}
Thus by  \eqref{3.44}, \eqref{3.31}, and \eqref{const}, we achieve
\begin{equation}\label{3.6}
\|u\|^{\mathfrak{m}_{2}- p}_{W^{s,p}_{0}(\Omega)} \geqslant \Bigg( \frac{l-\mathfrak{m}_{1}}{\mathfrak{m}_{2}-\mathfrak{m}_{1}}\Bigg)\frac{c_{\mathcal{A}}b_0}{\|\mathfrak{b}^{+}\|_{\infty}C^{\mathfrak{m}_{2}}_{\mathfrak{m}_{2}}}:= C_1.
\end{equation}
Now, using again Remark \ref{remark2}, $(a_2)$,  $(a'_{3})$-$(ii)$, and \eqref{const},  we infer  that
\begin{equation}\label{3.66}
\begin{split}
\lambda (\mathfrak{m}_{2}-\mathfrak{m}_{1})\int_{\Omega}\mathfrak{a}(x)|u|^{\mathfrak{m}_{1}}\,dx = & \int_{\mathbb{R}^{N}\times \mathbb{R}^{N}}\mathcal{A}'(u(x)-u(y))(u(x)-u(y))^{2}K(x,y)\,dx\,dy  \\ & +\int_{\mathbb{R}^{N}\times \mathbb{R}^{N}}\mathcal{A}(u(x)-u(y))(u(x)-u(y))K(x,y)\,dx\,dy \\ & -\mathfrak{m}_{2}\int_{\mathbb{R}^{N}\times \mathbb{R}^{N}}\mathcal{A}(u(x)-u(y))(u(x)-u(y))K(x,y)\,dx\,dy\\ \geqslant & (\mathfrak{m}_{2}-m)c_{\mathcal{A}}b_0 \|u\|^{p}_{W^{s,p}_{0}(\Omega)}.
 \end{split}
\end{equation}
Therefore, by \eqref{3.3},  \eqref{3.6}, \eqref{3.66}, and \eqref{const}, we obtain that
\begin{equation*}
\lambda \geqslant \Bigg( \frac{\mathfrak{m}_{2}-m}{\mathfrak{m}_2-\mathfrak{m}_1}\Bigg)\frac{c_{\mathcal{A}}b_0}{C^{\mathfrak{m_1}}_{\mathfrak{m_1}}\|\mathfrak{a}^{+}\|_{\infty}}C_{1}^{\frac{p-\mathfrak{m}_{1}}{\mathfrak{m}_{2}-p}}>0.
\end{equation*}
Which is  a contradiction for each $\lambda>0$ small enough. Hence, the  proof of item $(1)$ it is
complete.
\\
\noindent $(2)$ Without loss of generality suppose that $u \in \mathcal{N}_{\lambda,1}^{+}$. Define the function $\mathsf{G}_{\lambda}: \mathcal{N}_{\lambda,1}^{+} \to \mathbb{R}$ 
\begin{equation*}
\begin{split}
\mathsf{G}_{\lambda}(u)=  &\langle\mathcal{J}'_{\lambda, 1}(u),u\rangle \\ =&   \int_{\mathbb{R}^{N}\times \mathbb{R}^{N}}\mathcal{A}(u(x)-u(y))(u(x)-u(y))K(x,y)\,dx\,dy - \lambda\int_{\Omega}\mathfrak{a}(x)|u|^{\mathfrak{m}_{1}}\,dx \\ & - \int_{\Omega}\mathfrak{b}(x)|u|^{\mathfrak{m}_{2}}\,dx \mbox{ for all } u \in \mathcal{N}_{\lambda,1}^{+}.
\end{split}
\end{equation*}
Note that  
\begin{equation} \label{3.7}
\langle\mathsf{G}'_{\lambda}(u),u\rangle =   \langle\mathcal{J}''_{\lambda, 1}(u)(u,u),u\rangle +  \langle\mathcal{J}'_{\lambda, 1}(u),u\rangle =  \wp''(1) \mbox{ for all } u \in \mathcal{N}_{\lambda,1}^{+}.
\end{equation}
Hence  $ \mathcal{N}_{\lambda,1}^{+}= \mathsf{G}^{-1}_{\lambda}(\{0\})$ is a $ C^{1}$-manifold. Indeed,   for $u \in  \mathcal{N}_{\lambda,1}^{+}$, using \eqref{3.7}, we get $\langle \mathsf{G}'_{\lambda}(u), u \rangle > 0$. Therefore, $\langle \mathsf{G}'_{\lambda}(u), u \rangle\neq 0$. As $\mathsf{G}_{\lambda}(u)$ is class $C^{1}(W^{s,p}_{0}(\Omega), \mathbb{R})$  it follows that  $\mathcal{N}_{\lambda,1}^{+}= \mathsf{G}^{-1}_{\lambda}(\{0\})$ is a $C^{1}$-manifold.  Similarly, we may show that $\mathcal{N}_{\lambda,1}^{-}$
is a $C^{1}$-manifold. Consequently, as $\mathcal{N}_{\lambda,1}^{0}= \emptyset$   for all $\lambda > 0$ small enough, it follows  that  $\mathcal{N}_{\lambda,1}=\mathcal{N}_{\lambda,1}^{+}\cup\mathcal{N}_{\lambda,1}^{-}$ is a $C^{1}$-manifold.
\end{proof}
\begin{lemma}
 Let $u_0$ be a local minimum (or local maximum) of $ \mathcal{J}_{\lambda,1}$ in such a way that $u_0 \notin \mathcal{N}_{\lambda,1}^{0} $. Then $u_0$ is a  critical point for $ \mathcal{J}_{\lambda,1}$.
\end{lemma}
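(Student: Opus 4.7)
The plan is to invoke the Lagrange multiplier rule on the constraint manifold $\mathcal{N}_{\lambda,1}$ and use the characterization $\langle \mathsf{G}'_\lambda(u), u\rangle = \wp''_u(1)$ established in \eqref{3.7} together with the hypothesis $u_0 \notin \mathcal{N}_{\lambda,1}^0$ to force the multiplier to vanish.

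More precisely, first I would recall that $\mathcal{N}_{\lambda,1}$ is locally the zero set of the $C^1$ functional $\mathsf{G}_\lambda(u) = \langle \mathcal{J}'_{\lambda,1}(u), u\rangle$, as was already established in Lemma \ref{lem3.1}. Since $u_0 \notin \mathcal{N}_{\lambda,1}^0$ we have $\wp''_{u_0}(1) \neq 0$, and by \eqref{3.7} this means $\langle \mathsf{G}'_\lambda(u_0), u_0\rangle \neq 0$; in particular $\mathsf{G}'_\lambda(u_0) \neq 0$ in $\mathscr{W}'$. Hence $\mathsf{G}_\lambda$ is a $C^1$ submersion at $u_0$ and, by the implicit function theorem, there is a neighborhood of $u_0$ in $\mathcal{N}_{\lambda,1}$ that is a genuine $C^1$ Banach manifold with tangent space $\ker \mathsf{G}'_\lambda(u_0)$.

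Since $u_0$ is a local extremum of $\mathcal{J}_{\lambda,1}$ restricted to this manifold, the standard Lagrange multiplier theorem in Banach spaces yields a scalar $\mu \in \mathbb{R}$ such that
\begin{equation*}
\mathcal{J}'_{\lambda,1}(u_0) \;=\; \mu\, \mathsf{G}'_\lambda(u_0) \quad \text{in } \mathscr{W}'.
\end{equation*}
Testing this identity against $u_0$ itself gives
\begin{equation*}
\langle \mathcal{J}'_{\lambda,1}(u_0), u_0\rangle \;=\; \mu\, \langle \mathsf{G}'_\lambda(u_0), u_0\rangle.
\end{equation*}
The left-hand side equals $\mathsf{G}_\lambda(u_0) = 0$ because $u_0 \in \mathcal{N}_{\lambda,1}$, while by \eqref{3.7} the right-hand side equals $\mu\, \wp''_{u_0}(1)$.

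Since $\wp''_{u_0}(1) \neq 0$ by the hypothesis $u_0 \notin \mathcal{N}_{\lambda,1}^0$, we conclude $\mu = 0$, and therefore $\mathcal{J}'_{\lambda,1}(u_0) = 0$ in $\mathscr{W}'$, i.e.\ $u_0$ is a free critical point of $\mathcal{J}_{\lambda,1}$. The only nontrivial point in this argument is invoking the Lagrange multiplier rule, which requires $\mathsf{G}'_\lambda(u_0) \neq 0$; this is guaranteed precisely by $\wp''_{u_0}(1) \neq 0$ via \eqref{3.7}, so no further obstacle arises.
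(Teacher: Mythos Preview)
Your proposal is correct and follows essentially the same route as the paper: the paper also defines the constraint functional $\mathsf{H}(u)=\langle\mathcal{J}'_{\lambda,1}(u),u\rangle$ (identical to $\mathsf{G}_\lambda$), applies the Lagrange multiplier rule to obtain $\mathcal{J}'_{\lambda,1}(u_0)=\mu\,\mathsf{H}'(u_0)$, tests against $u_0$ to get $\mu\,\wp''_{u_0}(1)=0$, and concludes $\mu=0$ from $u_0\notin\mathcal{N}_{\lambda,1}^0$. Your version is slightly more explicit in checking the submersion condition $\mathsf{G}'_\lambda(u_0)\neq 0$ needed to invoke the multiplier rule, but the argument is otherwise identical.
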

\begin{proof}
Without  loss of generality, we suppose that $u_0$ is a local minimum of $ \mathcal{J}_{\lambda,1}$. Define the function $\mathsf{H}: W^{s,p}_{0}(\Omega)\to \mathbb{R}$ by
\begin{equation*}
\begin{split}
\mathsf{H}(u)= \langle\mathcal{J}'_{\lambda, 1}(u),u\rangle  \mbox{ for all } u \in W^{s,p}_{0}(\Omega).
\end{split}
\end{equation*}
 We  observe that $u_{0}$ is a solution for the minimization problem
\begin{equation}\label{3.70}
\left\{\begin{array}{rc} 
\begin{split}
&\inf \mathcal{J}_{\lambda, 1}(u) \\
&\mathsf{H}(u)  =  0.
\end{split}
\end{array}\right.
\end{equation}
Now note that 
\begin{equation}\label{16.0}
\langle\mathsf{H}'(u), v \rangle=  \langle\mathcal{J}''_{\lambda, 1}(u)(u,u),v\rangle +  \langle\mathcal{J}'_{\lambda, 1}(u),v\rangle\mbox{ for all } u, v \in W^{s,p}_{0}(\Omega).
\end{equation}
Then taking $u = v = u_0$ in \eqref{16.0}, we infer that
\begin{equation}\label{lala}
\langle\mathsf{H}'(u_0), u_0 \rangle =  \langle\mathcal{J}''_{\lambda, 1}(u_{0})(u_{0}, u_{0}),u_{0}\rangle +  \langle\mathcal{J}'_{\lambda, 1}(u_{0}),u_{0}\rangle= \wp''_{u_{0}}(1)>0. 
\end{equation}
Thus  $u_0 \notin \mathcal{N}_{\lambda,1}^{0} $ and by Lemma \ref{lem3.1} we conclude that  problem \eqref{3.70} has a solution in the following form
$$\mathcal{J}'_{\lambda, 1}(u_{0})=\mu \mathsf{H}'(u_0), $$
where $ \mu \in \mathbb{R}$ which is given by Lagrange Multipliers Theorem. 
\noindent Since $u_{0} \in \mathcal{N}_{\lambda,1}$ we obtain that
\begin{equation}\label{crticoo}
\mu\langle \mathsf{H}'(u_0), u_{0}\rangle =  \langle\mathcal{J}'_{\lambda, 1}(u_{0}), u_0\rangle =0.
\end{equation}
However by \eqref{lala} we infer that  $\langle \mathsf{H}'(u_0), u_{0}\rangle \neq 0$. Thus, by \eqref{crticoo} we conclude that $\mu =0$. Therefore, $\mathcal{J}'_{\lambda, 1}(u_{0})=0$ and $u_{0}$ is a critical point for $\mathcal{J}_{\lambda, 1}$ on $W^{s,p}_{0}(\Omega)$.
\end{proof}

\subsection{ The fibering map}
 \hfill \break
In this subsection, we will do a complete analysis of the fibering map associated with  problem \eqref{p1}. The essential nature for the fibering maps is determined by the signs of $\displaystyle{\int_{\Omega}\mathfrak{a}(x)|u|^{\mathfrak{m}_{1}}\,dx}$ and $\displaystyle{\int_{\Omega}\mathfrak{b}(x)|u|^{\mathfrak{m}_{2}}\,dx}$.

Throughout this subsection, fixed $u \in W^{s,p}_{0}(\Omega)\setminus \{0\}$ it is useful to consider the auxiliary function $\mathsf{M}_{u}:\mathbb{R}\to \mathbb{R}$ by
\begin{equation*}
\mathsf{M}_{u}(t)=t^{-\mathfrak{m}_{1}}\int_{\mathbb{R}^{N}\times \mathbb{R}^{N}}\mathcal{A}(tu(x)-tu(y))(tu(x)-tu(y))K(x,y)\,dx\,dy - t^{-\mathfrak{m}_{1}}\int_{\Omega}\mathfrak{b}(x)|tu|^{\mathfrak{m}_{2}}\,dx
\end{equation*}
for all $t \in \mathbb{R}$.

Note that   $\mathsf{M}_{u}$  has possible forms  when $\displaystyle{\int_{\Omega}\mathfrak{b}(x)|u|^{\mathfrak{m}_{2}}\,dx \leqslant 0}$ and $\displaystyle{\int_{\Omega}\mathfrak{b}(x)|u|^{\mathfrak{m}_{2}}\,dx > 0}$,  respectively  
\begin{figure}[h]
\begin{center}
\includegraphics[scale=0.34]{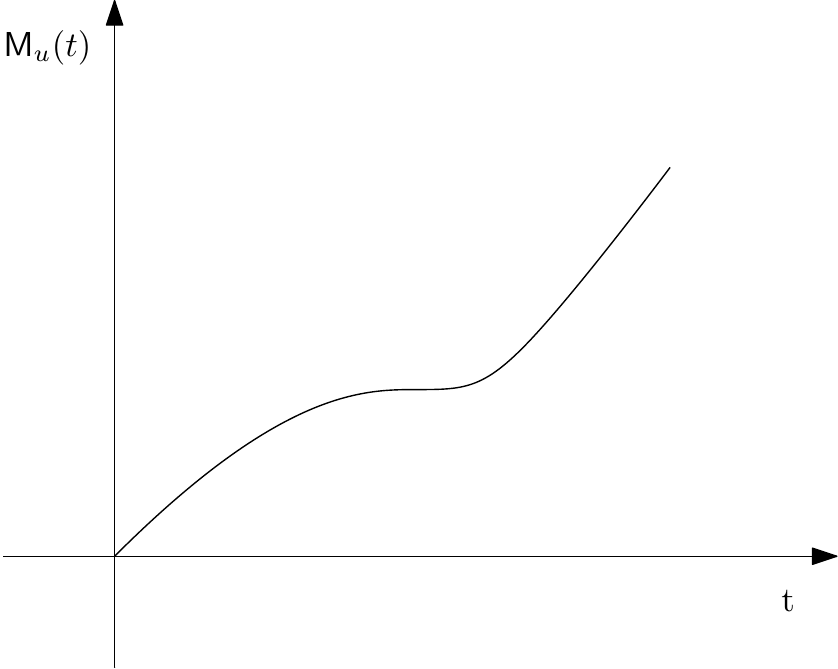}\,\,\,\,\,\,\,\,\,\,\,\,\,\,\,\,\,\,\,\,\,\,\,\,\,\,\,\,\,\,\,\,\,\,\,\,\,\,\,\,\,\,\,\,\,\,\,\,\,
\includegraphics[scale=0.28]{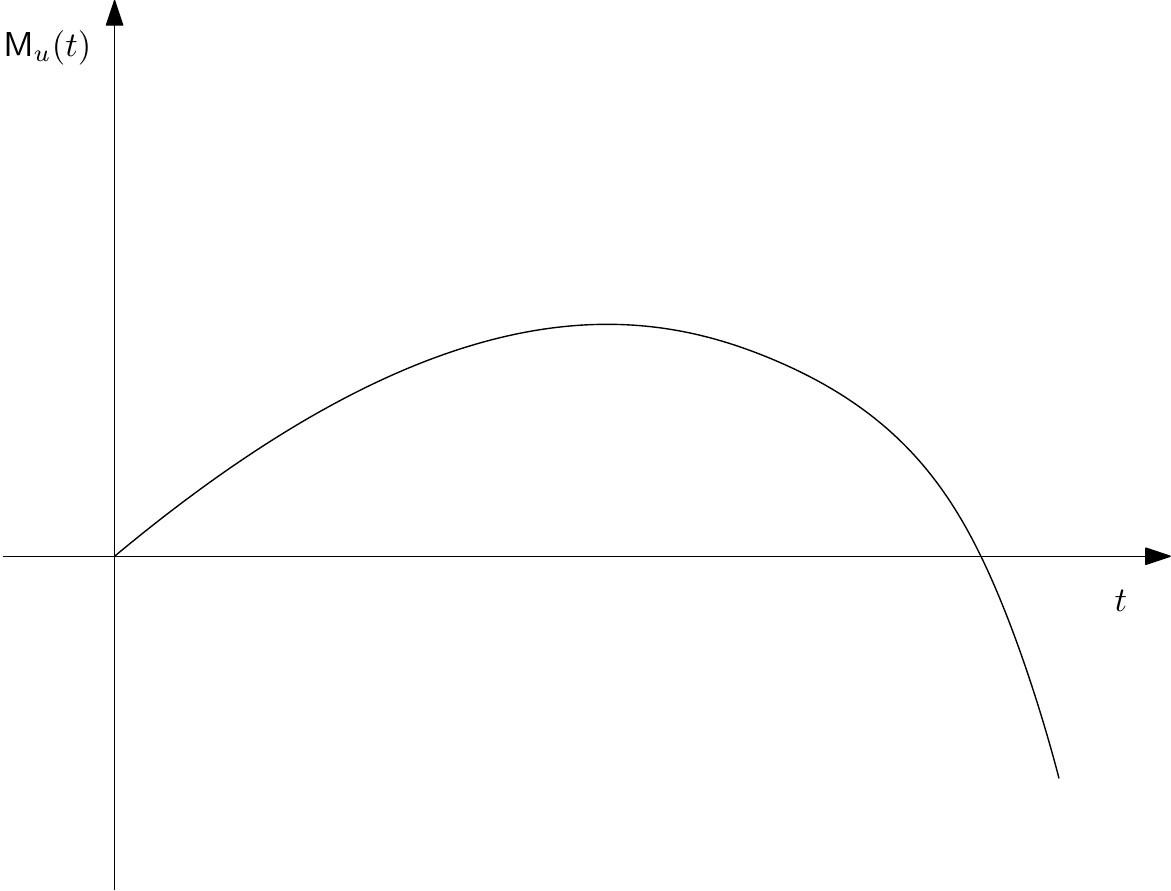}
\caption{Sketches of $\mathsf{M}_{u}$}
\end{center}
\end{figure}

\begin{lemma}\label{lem4.1}
 Let $t > 0$ be fixed. Then $tu \in \mathcal{N}_{\lambda,1}$ if and only if is a solution of $\mathsf{M}_{u}(t)= \lambda\displaystyle{\int_{\Omega}\mathfrak{a}(x)|u|^{\mathfrak{m}_{1}}\,dx.}$
\end{lemma}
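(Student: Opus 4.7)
The proof plan is essentially a direct algebraic unpacking of the definitions of $\mathcal{N}_{\lambda,1}$ and $\mathsf{M}_u(t)$, using that $t>0$ so division by a positive power of $t$ is legal. There is no real obstacle here; the lemma is the expected restatement of the Nehari condition on the ray $\{tu: t>0\}$.

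First I would recall that, by the definition of $\mathcal{N}_{\lambda,1}$ given in \eqref{3.1}, the element $tu$ lies in $\mathcal{N}_{\lambda,1}$ if and only if $tu\neq 0$ (which holds because $u\neq 0$ and $t>0$) and
\begin{equation*}
\int_{\mathbb{R}^{N}\times \mathbb{R}^{N}}\mathcal{A}(tu(x)-tu(y))(tu(x)-tu(y))K(x,y)\,dx\,dy
= \lambda\int_{\Omega}\mathfrak{a}(x)|tu|^{\mathfrak{m}_{1}}\,dx+\int_{\Omega}\mathfrak{b}(x)|tu|^{\mathfrak{m}_{2}}\,dx .
\end{equation*}
Pulling the positive factor $t^{\mathfrak{m}_1}$ out of the first integral on the right (using $|tu|^{\mathfrak{m}_1}=t^{\mathfrak{m}_1}|u|^{\mathfrak{m}_1}$), this is equivalent to
\begin{equation*}
\int_{\mathbb{R}^{N}\times \mathbb{R}^{N}}\mathcal{A}(tu(x)-tu(y))(tu(x)-tu(y))K(x,y)\,dx\,dy - \int_{\Omega}\mathfrak{b}(x)|tu|^{\mathfrak{m}_{2}}\,dx
= \lambda t^{\mathfrak{m}_1}\int_{\Omega}\mathfrak{a}(x)|u|^{\mathfrak{m}_{1}}\,dx .
\end{equation*}

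Next, since $t>0$ we may divide both sides by $t^{\mathfrak{m}_1}$ without changing the set of solutions, producing
\begin{equation*}
t^{-\mathfrak{m}_{1}}\!\!\int_{\mathbb{R}^{N}\times \mathbb{R}^{N}}\!\mathcal{A}(tu(x)-tu(y))(tu(x)-tu(y))K(x,y)\,dx\,dy - t^{-\mathfrak{m}_{1}}\!\!\int_{\Omega}\mathfrak{b}(x)|tu|^{\mathfrak{m}_{2}}\,dx
= \lambda \int_{\Omega}\mathfrak{a}(x)|u|^{\mathfrak{m}_{1}}\,dx .
\end{equation*}
By the definition of the auxiliary map $\mathsf{M}_u$, the left-hand side is precisely $\mathsf{M}_u(t)$. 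Therefore $tu\in\mathcal{N}_{\lambda,1}$ is equivalent to $\mathsf{M}_u(t)=\lambda\int_{\Omega}\mathfrak{a}(x)|u|^{\mathfrak{m}_1}\,dx$. Since every step is reversible (multiplying by the positive number $t^{\mathfrak{m}_1}$ recovers the Nehari identity, and $tu\neq 0$), both implications are proved simultaneously, completing the argument.
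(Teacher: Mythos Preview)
Your proof is correct and follows essentially the same approach as the paper: both unpack the Nehari identity for $tu$, pull out the powers of $t$, and divide by $t^{\mathfrak{m}_1}$ to recognize $\mathsf{M}_u(t)$. Your version is slightly more explicit about reversibility, while the paper only writes out the forward direction, but the arguments are the same.
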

\begin{proof}
Fix $t>0$ such that $tu \in \mathcal{N}_{\lambda,1}$. Then, 
\begin{equation}\label{3.71}
\begin{split}
\int_{\mathbb{R}^{N}\times \mathbb{R}^{N}}\mathcal{A}(tu(x)-tu(y))(tu(x)-tu(y))K(x,y)\,dx\,dy =& \lambda t^{\mathfrak{m}_1} \int_{\Omega}\mathfrak{a}(x)|u|^{\mathfrak{m}_{1}}\,dx \\& + t^{\mathfrak{m}_2}\int_{\Omega}\mathfrak{b}(x)|u|^{\mathfrak{m}_{2}}\,dx.
\end{split}
\end{equation}
Thus, multiplying \eqref{3.71} by $t^{- \mathfrak{m}_1}$, we have that
\begin{equation*}
\begin{split}
\lambda \int_{\Omega}\mathfrak{a}(x)|u|^{\mathfrak{m}_{1}}\,dx =& 
t^{- \mathfrak{m}_1}\int_{\mathbb{R}^{N}\times \mathbb{R}^{N}}\mathcal{A}(tu(x)-tu(y))(tu(x)-tu(y))K(x,y)\,dx\,dy \\&-  t^{\mathfrak{m}_2- \mathfrak{m}_1}\int_{\Omega}\mathfrak{b}(x)|u|^{\mathfrak{m}_{2}}\,dx.
\end{split}
\end{equation*}

\end{proof}
\begin{lemma}\label{lem4.2}
\begin{itemize} \item[]
\item[$(a)$] Assume that $\displaystyle{\int_{\Omega}\mathfrak{b}(x)|u|^{\mathfrak{m}_{2}}\,dx \leqslant  0}$.
 Then we obtain $$\mathsf{M}_{u}(0):=\displaystyle{\lim_{t\to 0^{+}}\mathsf{M}_{u}(t)=0}, \displaystyle{\mathsf{M}_{u}(\infty):=\lim_{t\to +\infty}\mathsf{M}_{u}(t)= +\infty}, \mbox{ and } \mathsf{M}'_{u}(t)>0 \mbox{ for all } t > 0;$$
\item[$(b)$] Assume that $\displaystyle{\int_{\Omega}\mathfrak{b}(x)|u|^{\mathfrak{m}_{2}}\,dx > 0}$ and $(\mathfrak{m}_2-1)(m-l)<(\mathfrak{m}_2-l)(m-\mathfrak{m}_1)$. Then there exists a unique critical point for  $\mathsf{M}_{u}$, i.e., there is a unique point $\tilde{t} > 0$ in such a way that $\mathsf{M}_{u}(\tilde{t})=0$. Furthermore, we know that $\tilde{t} > 0$ is a global maximum point for $\mathsf{M}_{u}$ and $\mathsf{M}_{u}(\infty)=- \infty$.
\end{itemize}
\end{lemma}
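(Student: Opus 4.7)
I will write $F(t):=\int_{\mathbb{R}^{N}\times\mathbb{R}^{N}}\mathcal{A}(t(u(x)-u(y)))\,t(u(x)-u(y))\,K(x,y)\,dx\,dy$ and $B:=\int_{\Omega}\mathfrak{b}(x)|u|^{\mathfrak{m}_{2}}\,dx$, so that $\mathsf{M}_{u}(t)=t^{-\mathfrak{m}_{1}}F(t)-t^{\mathfrak{m}_{2}-\mathfrak{m}_{1}}B$. Combining $(a_{2})$ with $(\mathcal{K})$ yields the two-sided comparison $c_{\mathcal{A}}b_{0}\,t^{p}\|u\|_{W^{s,p}_{0}(\Omega)}^{p}\leqslant F(t)\leqslant C_{\mathcal{A}}b_{1}\,t^{p}\|u\|_{W^{s,p}_{0}(\Omega)}^{p}$, and $(a_{3}')$ makes $F\in C^{2}(0,+\infty)$ and provides the pointwise control of $tF'(t)$ and $t^{2}F''(t)$ that drives the rest of the argument.

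\textbf{Case (a).} The comparison combined with $\mathfrak{m}_{1}<p<\mathfrak{m}_{2}$ (from \eqref{const}) immediately gives $\mathsf{M}_{u}(0^{+})=0$; at infinity $t^{-\mathfrak{m}_{1}}F(t)\geqslant c_{\mathcal{A}}b_{0}\,t^{p-\mathfrak{m}_{1}}\|u\|^{p}_{W^{s,p}_{0}(\Omega)}\to+\infty$ and $-t^{\mathfrak{m}_{2}-\mathfrak{m}_{1}}B\geqslant 0$, so $\mathsf{M}_{u}(+\infty)=+\infty$. Differentiating gives
\begin{equation*}
\mathsf{M}_{u}'(t)=t^{-\mathfrak{m}_{1}-1}\bigl[tF'(t)-\mathfrak{m}_{1}F(t)\bigr]-(\mathfrak{m}_{2}-\mathfrak{m}_{1})\,t^{\mathfrak{m}_{2}-\mathfrak{m}_{1}-1}B,
\end{equation*}
and integrating $(a_{3}')$-(ii) against $K$ yields $tF'(t)\geqslant lF(t)$, whence $tF'(t)-\mathfrak{m}_{1}F(t)\geqslant(l-\mathfrak{m}_{1})F(t)>0$ (since $l>\mathfrak{m}_{1}$); together with $-B\geqslant 0$ this shows $\mathsf{M}_{u}'(t)>0$ throughout.

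\textbf{Case (b), existence of $\tilde t$.} For $B>0$ the limit $\mathsf{M}_{u}(0^{+})=0$ persists, and the factorisation $\mathsf{M}_{u}(t)\geqslant t^{p-\mathfrak{m}_{1}}\bigl[c_{\mathcal{A}}b_{0}\|u\|^{p}_{W^{s,p}_{0}(\Omega)}-B\,t^{\mathfrak{m}_{2}-p}\bigr]$ has a positive bracket for small $t$ (because $\mathfrak{m}_{2}>p$), so $\mathsf{M}_{u}>0$ near the origin. At infinity the opposite exponent comparison $\mathfrak{m}_{2}-\mathfrak{m}_{1}>p-\mathfrak{m}_{1}$ forces $\mathsf{M}_{u}(+\infty)=-\infty$. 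By continuity $\mathsf{M}_{u}$ attains its (positive) global maximum at some $\tilde t\in(0,+\infty)$, which is automatically a critical point.

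\textbf{Case (b), uniqueness --- the main obstacle.} Setting $\varphi(t):=tF'(t)-\mathfrak{m}_{1}F(t)$, every positive critical point satisfies $\Psi(t):=\varphi(t)/t^{\mathfrak{m}_{2}}=(\mathfrak{m}_{2}-\mathfrak{m}_{1})B$, a fixed positive constant, so the proof reduces to showing $\Psi'(t)<0$ on $(0,+\infty)$. A direct computation gives $t^{\mathfrak{m}_{2}+1}\Psi'(t)=(1-\mathfrak{m}_{1}-\mathfrak{m}_{2})tF'(t)+t^{2}F''(t)+\mathfrak{m}_{1}\mathfrak{m}_{2}F(t)$, and with $s:=t(u(x)-u(y))$ the integrand rearranges to
\begin{equation*}
I(s)=(3-\mathfrak{m}_{1}-\mathfrak{m}_{2})\mathcal{A}'(s)s^{2}+\mathcal{A}''(s)s^{3}+(1-\mathfrak{m}_{1})(1-\mathfrak{m}_{2})\mathcal{A}(s)s.
\end{equation*}
The hard part is that naive envelopes from $(a_{3}')$-(ii)--(iii) yield a condition strictly stronger than \eqref{const}; the chaining must be done carefully. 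I will first apply (iii) as $\mathcal{A}''(s)s^{3}\leqslant(m-2)\mathcal{A}'(s)s^{2}$, which collapses $I(s)$ to $(m+1-\mathfrak{m}_{1}-\mathfrak{m}_{2})\mathcal{A}'(s)s^{2}+(1-\mathfrak{m}_{1})(1-\mathfrak{m}_{2})\mathcal{A}(s)s$; then, because the coefficient $m+1-\mathfrak{m}_{1}-\mathfrak{m}_{2}$ is \emph{negative} (as $\mathfrak{m}_{2}>m$ and $\mathfrak{m}_{1}>1$), I will invoke the \emph{lower} bound $\mathcal{A}'(s)s^{2}\geqslant(l-1)\mathcal{A}(s)s$ from (ii). The algebra then collapses to $I(s)\leqslant\bigl[(\mathfrak{m}_{2}-1)(m-l)-(\mathfrak{m}_{2}-l)(m-\mathfrak{m}_{1})\bigr]\mathcal{A}(s)s$, whose bracket is strictly negative \emph{precisely} by \eqref{const}, and $\mathcal{A}(s)s\geqslant c_{\mathcal{A}}|s|^{p}>0$ for $s\neq 0$. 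Integrating against $K$ yields $\Psi'(t)<0$ for every $t>0$, so the critical point is unique; it is thus the global maximum of $\mathsf{M}_{u}$, and $\mathsf{M}_{u}(+\infty)=-\infty$ as obtained in the existence step.
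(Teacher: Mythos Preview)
Your proof is correct and follows the same strategy as the paper: for (a) both use $(a_3')$-(ii) to get $tF'(t)\geqslant lF(t)$ and hence $\mathsf{M}_u'>0$, and for (b) both reduce uniqueness to showing that the auxiliary function $\Psi(t)=t^{-\mathfrak{m}_2}\bigl(tF'(t)-\mathfrak{m}_1 F(t)\bigr)$ (called $\xi_u$ in the paper) is strictly decreasing, bounding $\Psi'$ via $(a_3')$-(ii),(iii) to reach the coefficient $(\mathfrak{m}_2-1)(m-l)-(\mathfrak{m}_2-l)(m-\mathfrak{m}_1)<0$ from \eqref{const}. Your treatment is in fact more transparent than the paper's, since you make explicit that $m+1-\mathfrak{m}_1-\mathfrak{m}_2<0$ forces the \emph{lower} bound in $(a_3')$-(ii) at the second step, whereas the paper jumps directly to the final inequality.
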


\begin{proof}
  $(a)$ Note that using $(a_{2})$, $(a_{3}')$-$(ii)$,$(\mathcal{K})$, and \eqref{const},  we obtain that
 \begin{equation}\label{4.0}
 \begin{split}
 \mathsf{M}_{u}'(t)\geqslant & (l-\mathfrak{m}_{1})t^{-\mathfrak{m}_{1}-1}\int_{\mathbb{R}^{N}\times \mathbb{R}^{N}}\mathcal{A}(tu(x)-tu(y))(tu(x)-tu(y))K(x,y)\,dx\,dy \\&- (\mathfrak{m}_{2}-\mathfrak{m}_{1})t^{\mathfrak{m}_{2}-\mathfrak{m}_{1}-1}\int_{\Omega}\mathfrak{b}(x)|u|^{\mathfrak{m}_{2}}\,dx. 
 \end{split}
 \end{equation}
 Once $\displaystyle{\int_{\Omega}\mathfrak{b}(x)|u|^{\mathfrak{m}_{2}}\,dx \leqslant 0}$, from \eqref{4.0}, we obtain $\mathsf{M}_{u}'(t)>0$ for all  $t>0$.
 
 Now, we shall prove that $ \mathsf{M}_{u}(0)=0$. Indeed, using $(a_2)$ and $(\mathcal{K})$, we
deduce that
\begin{equation}\label{4.1}
 \begin{split}
 \mathsf{M}_{u}(t) \geqslant & c_{\mathcal{A}}b_0 t^{p-\mathfrak{m}_{1}}\int_{\mathbb{R}^{N}\times \mathbb{R}^{N}}\frac{|u(x)-u(y)|^{p}}{|x-y|^{N+sp}}\,dx\,dy - t^{\mathfrak{m}_{2}-\mathfrak{m}_{1}}\int_{\Omega}\mathfrak{b}(x)|u|^{\mathfrak{m}_{2}}\,dx .
 \end{split}
 \end{equation}
 On the other hand, using $(a_1)$, $(a_2)$, $(a_3')$-$(i)$, and $(\mathcal{K})$, we infer that
 \begin{equation}\label{4.2}
 \begin{split}
 \mathsf{M}_{u}(t) \leqslant & t^{p-\mathfrak{m}_{1}} C_{\mathcal{A}}b_1
  \int_{\mathbb{R}^{N}\times \mathbb{R}^{N}}\frac{|u(x)-u(y)|^{p}}{|x-y|^{N+sp}}\,dx\,dy - t^{\mathfrak{m}_{2}-\mathfrak{m}_{1}}\int_{\Omega}\mathfrak{b}(x)|u|^{\mathfrak{m}_{2}}\,dx .
 \end{split}
 \end{equation}
 Using \eqref{4.1}, \eqref{4.2}, and \eqref{const}, we conclude that $\displaystyle{\lim_{t\to 0^{+}}\mathsf{M}_{u}(t)=0}$. Moreover, by \eqref{4.1} and \eqref{const} also we observe that $\mathsf{M}_{u}(\infty)=\displaystyle{\lim_{t\to +\infty}\mathsf{M}_{u}(t)= +\infty.}$ 

  \noindent $(b)$ As a first step we note that $\displaystyle{\lim_{t\to0^{+}} \mathsf{M}_{u}(t) = 0}$, $\mathsf{M}_{u}$ is increasing for $t > 0$ small enough and $\displaystyle{\lim_{t\to+\infty} \mathsf{M}_{u}(t) = -\infty}$. More specifically, for $0<t<1$ we observe that using \eqref{4.0} and the fact 
 that $\displaystyle{\int_{\Omega}\mathfrak{b}(x)|u|^{\mathfrak{m}_{2}}\,dx > 0}$   we obtain that $\mathsf{M}_{u}'(t) > 0$, i.e., $\mathsf{M}_{u}(t)$ is increasing
for $t \in(0,1)$. Moreover, from \eqref{4.0} and \eqref{4.1}, we obtain $\displaystyle{\lim_{t\to 0^{+}}\mathsf{M}_{u}(t)=0}$. Finally   using \eqref{4.2} and \eqref{const}, it follows that $\displaystyle{\lim_{t\to +\infty}\mathsf{M}_{u}(t)=-\infty.}$

   Now the main goal in this proof is to show that $\mathsf{M}_{u}$ has a unique critical point $\tilde{t}>0$. Note that $\mathsf{M}_{u}'(t) =0$ if and only if, we have
 \begin{equation*}
 \begin{split}
 (\mathfrak{m}_{2}-\mathfrak{m}_{1})\int_{\Omega}\mathfrak{b}(x)|u|^{\mathfrak{m}_{2}}\,dx = &t^{-\mathfrak{m}_{2}}\Bigg[\int_{\mathbb{R}^{N}\times \mathbb{R}^{N}} \mathcal{A}(tu(x)-tu(y))(tu(x)-tu(y))K(x,y)\,dx\,dy \\ & -\mathfrak{m}_{1}\int_{\mathbb{R}^{N}\times \mathbb{R}^{N}} \mathcal{A}(tu(x)-tu(y))(tu(x)-tu(y))K(x,y)\,dx\,dy \\&+\int_{\mathbb{R}^{N}\times \mathbb{R}^{N}} \mathcal{A}'(tu(x)-tu(y))(tu(x)-tu(y))^{2}K(x,y)\,dx\,dy\Bigg]. 
 \end{split}
 \end{equation*}
 Define the auxiliary function $\xi_{u}:\mathbb{R} \to \mathbb{R} $ given by
 \begin{equation*}
 \begin{split}
\xi_{u}(t) = &(1-\mathfrak{m}_{1})t^{-\mathfrak{m}_{2}}\int_{\mathbb{R}^{N}\times \mathbb{R}^{N}} \mathcal{A}(tu(x)-tu(y))(tu(x)-tu(y))K(x,y)\,dx\,dy \\&+t^{-\mathfrak{m}_{2}}\int_{\mathbb{R}^{N}\times \mathbb{R}^{N}} \mathcal{A}'(tu(x)-tu(y))(tu(x)-tu(y))^{2}K(x,y)\,dx\,dy. 
 \end{split}
 \end{equation*}
 Note that using $(a_2)$, $(a_3')$-$(ii)$,  $(\mathcal{K})$, and \eqref{const}, we infer that
  \begin{equation}\label{4.7}
 \begin{split}
\xi_{u}(t) \geqslant  &(l-\mathfrak{m}_{1})t^{p-\mathfrak{m}_{2}}c_{\mathcal{A}}b_{0} \|u\|^{p}_{W^{s,p}_{0}(\Omega)}.
 \end{split}
 \end{equation}
 Then using \eqref{4.7} and \eqref{const} for $0<t<1$, we obtain $\displaystyle{\lim_{t\to 0^{+}}\xi_{u}(t)= +\infty} $. Moreover, $\displaystyle{\lim_{t\to +\infty}\xi_{u}(t)}= 0$ and $\xi_{u}$ is a decreasing function. Indeed, using $(a_1)$, $(a_2)$, $(a_3')$-$(i)$,  $(a_3')$-$(ii)$, $(\mathcal{K})$, and \eqref{const}, we have that
 \\
 \begin{equation}\label{4.8}
 \begin{split}
\xi_{u}(t) \leqslant  &(m-\mathfrak{m}_{1})t^{p-\mathfrak{m}_{2}}C_{\mathcal{A}}b_{1}\|u\|^{p}_{W^{s,p}_{0}(\Omega)}.
 \end{split}
 \end{equation}
 Therefore, for all $t>1$  using  \eqref{4.7} and \eqref{4.8}, we obtain that $\displaystyle{\lim_{t\to +\infty}\xi_{u}(t)=0}$.
 
 Now using    $(a_2)$, $(a'_3)$, $(\mathcal{K})$, and \eqref{const}, we infer that
 \begin{equation*}\label{4.44}
 \begin{split}
 \xi'_{u}(t) \leqslant & ( \mathfrak{m}_{1} \mathfrak{m}_{2}- \mathfrak{m}_{1}l- \mathfrak{m}_{2}l+ml )t^{-\mathfrak{m}_{2}-1}\int_{\mathbb{R}^{N}\times \mathbb{R}^{N}} \mathcal{A}(tu(x)-tu(y))(tu(x)-tu(y))K(x,y)\,dx\,dy \\ &+ (l-m)t^{-\mathfrak{m}_{2}-1}\int_{\mathbb{R}^{N}\times \mathbb{R}^{N}} \mathcal{A}(tu(x)-tu(y))(tu(x)-tu(y))K(x,y)\,dx\,dy \\<&0.
 \end{split}
 \end{equation*}
Therefore,  $ \xi_{u}$ is decreasing function proving that $ \mathsf{M}_{u} $ has a unique critical
point which is a maximum critical point for $ \mathsf{M}_{u} $.   
\end{proof}
\begin{lemma}\label{lem4.3}
Let $ u \in W^{s,p}_{0}(\Omega)\setminus\{0\}$ be a fixed function. Then we shall consider the following assertions:
\begin{itemize}
\item[$(a)$] Assume that $\displaystyle{\int_{\Omega}\mathfrak{b}(x)|u|^{\mathfrak{m}_{2}}\,dx \leqslant 0}$.  Then $\wp'_{u}(t)\neq 0 $ for all $t>0$ and $\lambda >0$ whenever  $\displaystyle{\int_{\Omega}\mathfrak{a}(x)|u|^{\mathfrak{m}_{1}}\,dx \leqslant 0}$. Moreover, there exists a unique $t_{1}=t_{1}(u, \lambda)$ such
that $ \wp'_{u}(t_{1})=0 $ and $t_{1}u \in \mathcal{N}^{+}_{\lambda,1}$ whenever $\displaystyle{\int_{\Omega}\mathfrak{a}(x)|u|^{\mathfrak{m}_{1}}\,dx > 0}$;
\item[$(b)$] Assume that $\displaystyle{\int_{\Omega}\mathfrak{b}(x)|u|^{\mathfrak{m}_{2}}\,dx > 0}$.  Then exists a unique $t_{1}=t_{1}(u, \lambda)> \tilde{t}$ such that $\wp'_{u}(t_{1})=0 $ and $t_{1}u \in \mathcal{N}^{-}_{\lambda,1}$ whenever $\displaystyle{\int_{\Omega}\mathfrak{a}(x)|u|^{\mathfrak{m}_{1}}\,dx \leqslant 0}$;
\item[$(c)$] For each $ \lambda > 0$ small enough there exists unique $0<t_{1}=t_{1}(u, \lambda)<\tilde{t}<t_{2}=t_{2}(u, \lambda)$ such that $\wp'_{u}(t_{1})= \wp'_{u}(t_{2})=0 $, $t_{1}u \in \mathcal{N}^{+}_{\lambda,1}$, and $t_{2}u \in \mathcal{N}^{+}_{\lambda,1}$ whenever $\displaystyle{\int_{\Omega}\mathfrak{a}(x)|u|^{\mathfrak{m}_{1}}\,dx > 0}$, $\displaystyle{\int_{\Omega}\mathfrak{b}(x)|u|^{\mathfrak{m}_{2}}\,dx > 0}$, and $(\mathfrak{m}_2-1)(m-l)<(\mathfrak{m}_2-l)(m-\mathfrak{m}_1)$. 
\end{itemize}
\end{lemma}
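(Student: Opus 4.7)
The plan is to reduce the analysis of the fibering map $\wp_u$ entirely to the scalar auxiliary function $\mathsf{M}_u$ studied in Lemma \ref{lem4.2}. A direct manipulation (multiply $\wp'_u(t)$ by $t^{1-\mathfrak{m}_1}$) gives
\begin{equation*}
t^{1-\mathfrak{m}_1}\wp'_u(t) \,=\, \mathsf{M}_u(t) \,-\, \lambda \int_\Omega \mathfrak{a}(x)|u|^{\mathfrak{m}_1}\,dx \qquad \text{for all } t>0,
\end{equation*}
which is the content of Lemma \ref{lem4.1}: zeros of $\wp'_u$ on $(0,\infty)$ correspond exactly to intersections of the graph of $\mathsf{M}_u$ with the constant level $\lambda\int_\Omega\mathfrak{a}(x)|u|^{\mathfrak{m}_1}\,dx$. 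Differentiating this identity and evaluating at any zero $t_0$ of $\wp'_u$ yields $t_0^{1-\mathfrak{m}_1}\wp''_u(t_0)=\mathsf{M}'_u(t_0)$; since $\wp_{t_0 u}(s)=\wp_u(s t_0)$ implies $\wp''_{t_0 u}(1)=t_0^{2}\wp''_u(t_0)$, the sign of $\wp''_{t_0 u}(1)$ coincides with the sign of $\mathsf{M}'_u(t_0)$. Therefore every solution $t_0$ of the level equation is placed into $\mathcal{N}^{+}_{\lambda,1}$ or $\mathcal{N}^{-}_{\lambda,1}$ simply according to whether $\mathsf{M}'_u(t_0)$ is positive or negative.

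For part $(a)$, Lemma \ref{lem4.2}(a) says that $\mathsf{M}_u$ is strictly positive, strictly increasing, and ranges from $0$ to $+\infty$. If $\int_\Omega \mathfrak{a}(x)|u|^{\mathfrak{m}_1}\,dx\leqslant 0$, the target level is nonpositive while $\mathsf{M}_u(t)>0$ for every $t>0$, so no zero of $\wp'_u$ can exist. If instead $\int_\Omega \mathfrak{a}(x)|u|^{\mathfrak{m}_1}\,dx>0$, strict monotonicity together with the intermediate value theorem delivers a unique $t_1>0$ solving the level equation, and since $\mathsf{M}'_u(t_1)>0$ the sign rule places $t_1 u\in \mathcal{N}^{+}_{\lambda,1}$.

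For part $(b)$, Lemma \ref{lem4.2}(b) provides a unique global maximizer $\tilde{t}>0$: $\mathsf{M}_u$ increases from $0$ on $(0,\tilde{t})$ and decreases to $-\infty$ on $(\tilde{t},\infty)$. When $\int_\Omega \mathfrak{a}(x)|u|^{\mathfrak{m}_1}\,dx\leqslant 0$, the target level is nonpositive and cannot be attained on $(0,\tilde{t}]$ where $\mathsf{M}_u>0$; on the strictly decreasing branch $(\tilde{t},\infty)$ it is reached exactly once, at some $t_1>\tilde{t}$, and $\mathsf{M}'_u(t_1)<0$ there, so $t_1 u\in \mathcal{N}^{-}_{\lambda,1}$.

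For part $(c)$ the shape of $\mathsf{M}_u$ is as in Lemma \ref{lem4.2}(b), but now both sides of the level equation are positive. Two intersections $0<t_1<\tilde{t}<t_2$ appear precisely when
\begin{equation*}
0 \,<\, \lambda \int_\Omega \mathfrak{a}(x)|u|^{\mathfrak{m}_1}\,dx \,<\, \mathsf{M}_u(\tilde{t}),
\end{equation*}
and the main obstacle is securing the right-hand inequality for $\lambda$ small. The algebraic constraint $(\mathfrak{m}_2-l)(m-\mathfrak{m}_1)>(\mathfrak{m}_2-1)(m-l)$ from \eqref{const}, which already guaranteed the existence of a genuine positive maximum in Lemma \ref{lem4.2}(b), is decisive here: combining it with $(a_2)$, $(\mathcal{K})$ and the embedding $\mathscr{W}\hookrightarrow L^{\mathfrak{m}_1}(\Omega)$ yields a strictly positive lower bound for $\mathsf{M}_u(\tilde{t})$ and the upper estimate \eqref{3.3} for $\int_\Omega \mathfrak{a}(x)|u|^{\mathfrak{m}_1}\,dx$ in terms of $\|u\|_{W^{s,p}_0(\Omega)}$, from which a threshold $\lambda^{\ast}>0$ can be extracted. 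Two applications of the intermediate value theorem on the monotone branches of $\mathsf{M}_u$ then produce the unique pair $t_1,t_2$, and the sign rule assigns each to its Nehari subset according to whether $\mathsf{M}'_u$ is positive (for $t_1<\tilde{t}$) or negative (for $t_2>\tilde{t}$) at that point.
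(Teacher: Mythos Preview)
Your argument is essentially the same as the paper's: reduce the zeros of $\wp'_u$ to the level-set equation $\mathsf{M}_u(t)=\lambda\int_\Omega\mathfrak{a}|u|^{\mathfrak{m}_1}$ via Lemma~\ref{lem4.1}, read off existence and multiplicity of solutions from the shape of $\mathsf{M}_u$ given by Lemma~\ref{lem4.2}, and classify each root into $\mathcal{N}^{\pm}_{\lambda,1}$ through the identity $t_0^{1-\mathfrak{m}_1}\wp''_u(t_0)=\mathsf{M}'_u(t_0)$ at critical points. Parts $(a)$ and $(b)$ match the paper's proof exactly.

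In part $(c)$ you work harder than necessary. Since $u$ is \emph{fixed} throughout the lemma, the quantities $\mathsf{M}_u(\tilde{t})>0$ and $\int_\Omega\mathfrak{a}|u|^{\mathfrak{m}_1}>0$ are just two positive numbers, and the inequality $\lambda\int_\Omega\mathfrak{a}|u|^{\mathfrak{m}_1}<\mathsf{M}_u(\tilde{t})$ is secured by taking $\lambda$ below their ratio; no embedding estimates, no appeal to the algebraic constraint beyond what Lemma~\ref{lem4.2}(b) already used, and no uniform-in-$u$ threshold is required here. The paper does exactly this one-line step. The uniform threshold you sketch is the content of other results (Lemma~\ref{lem3.1} and Lemma~\ref{lem4.4}), not of the present lemma.
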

\begin{remark}
We observe that for $\displaystyle{\int_{\Omega}\mathfrak{a}(x)|u|^{\mathfrak{m}_{1}}\,dx \leqslant 0}$ and  $\displaystyle{\int_{\Omega}\mathfrak{b}(x)|u|^{\mathfrak{m}_{2}}\,dx \leqslant 0}$,   $\wp_{u}$ has a graph as the Figure $2$(a). For  $\displaystyle{\int_{\Omega}\mathfrak{a}(x)|u|^{\mathfrak{m}_{1}}\,dx > 0}$ and  $\displaystyle{\int_{\Omega}\mathfrak{b}(x)|u|^{\mathfrak{m}_{2}}\,dx \leqslant 0}$,  $\wp_{u}$ has a graph as the Figure $2$(b).  For $\displaystyle{\int_{\Omega}\mathfrak{a}(x)|u|^{\mathfrak{m}_{1}}\,dx \leqslant 0}$ and  $\displaystyle{\int_{\Omega}\mathfrak{b}(x)|u|^{\mathfrak{m}_{2}}\,dx > 0}$,  $\wp_{u}$ has a graph as the Figure $2$(c). For $\displaystyle{\int_{\Omega}\mathfrak{a}(x)|u|^{\mathfrak{m}_{1}}\,dx > 0}$ and  $\displaystyle{\int_{\Omega}\mathfrak{b}(x)|u|^{\mathfrak{m}_{2}}\,dx > 0}$,  $\wp_{u}$ has a graph as the Figure $2$(d).
\end{remark}
\begin{figure}[h]\label{figurass}
\begin{center}
\includegraphics[scale=0.25]{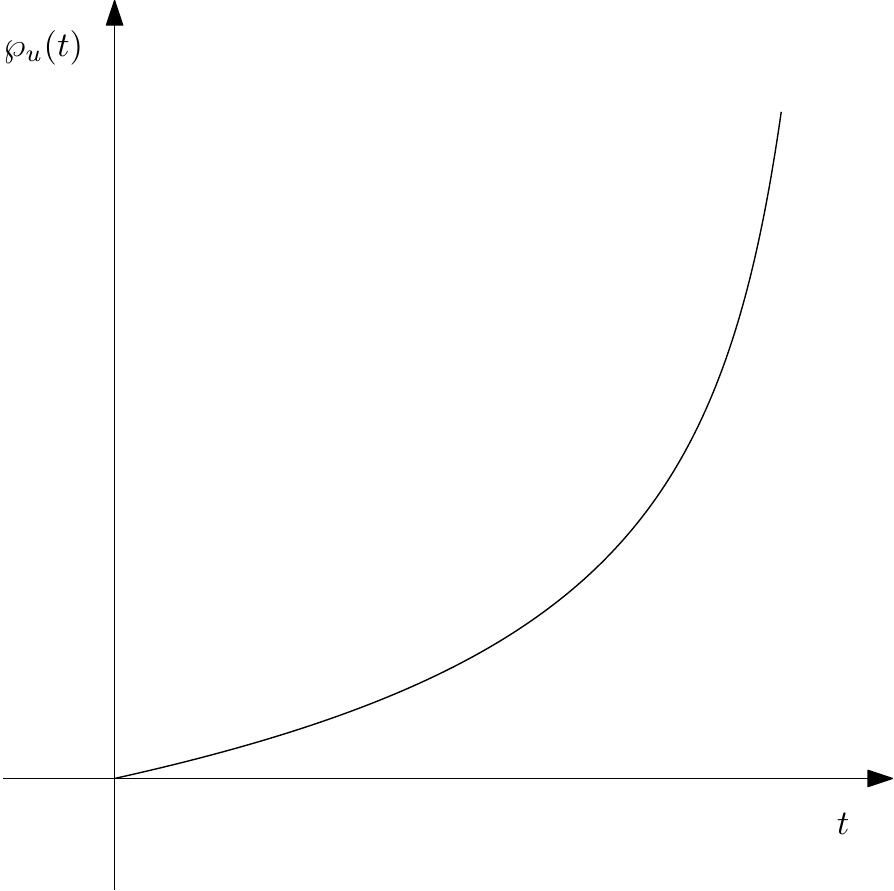}\,\,\,\,\,\,\,\,\,\,\,\,\,\,
\includegraphics[scale=0.4]{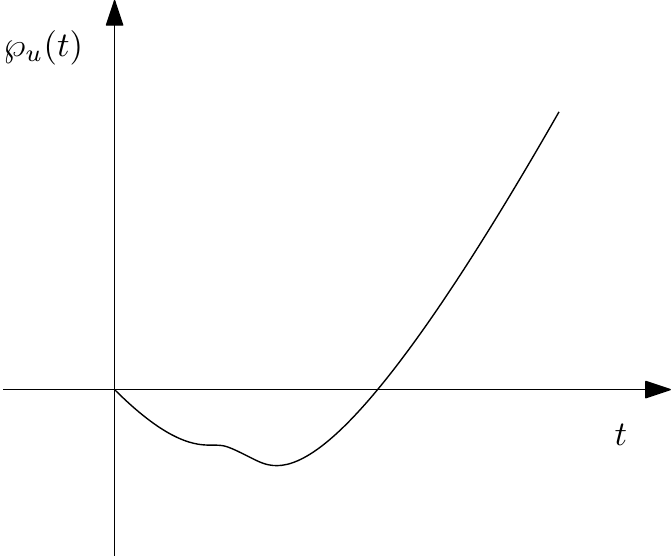} \,\,\,\,\,\,\,\,\,\,\,\,\,\,
\includegraphics[scale=0.25]{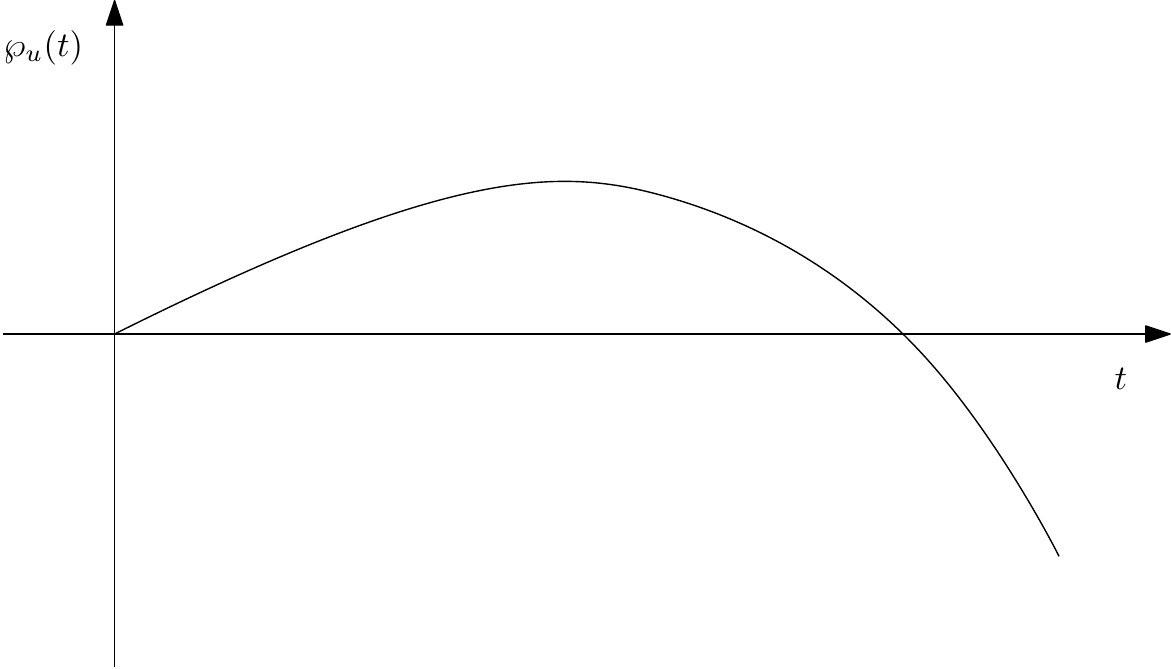}\,\,\,\,\,\,\,\,\,\,\,\,\,\,
\includegraphics[scale=0.25]{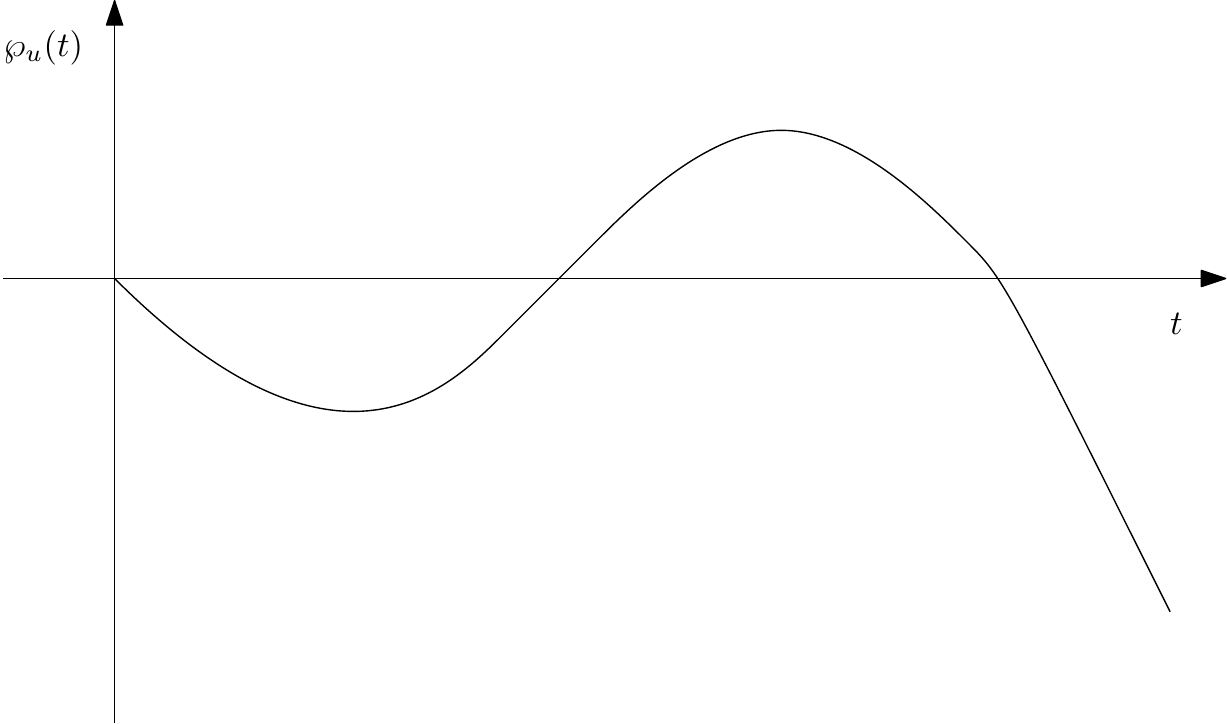}
\end{center}
\caption{Sketches of $\wp_{u}$}
\end{figure}

\begin{proof}
$(a)$ We shall consider the proof for the case $\displaystyle{\int_{\Omega}\mathfrak{b}(x)|u|^{\mathfrak{m}_{2}}\,dx \leqslant 0}$ and  $\displaystyle{\int_{\Omega}\mathfrak{a}(x)|u|^{\mathfrak{m}_{1}}\,dx \leqslant 0}$. Using the Lemma \ref{lem4.2}$-(a)$ it follows that
\begin{equation}\label{4.100}
\mathsf{M}_{u}(0)=0, \mathsf{M}_{u}(\infty):=\lim_{t\to +\infty}\mathsf{M}_{u}(t)= +\infty, \mbox{ and }\mathsf{M}'_{u}(t)>0 \mbox{ for all } t > 0.
\end{equation}
Thus, we achieve that $\mathsf{M}_{u}(t)\neq \lambda \displaystyle{\int_{\Omega}\mathfrak{a}(x)|u|^{\mathfrak{m}_{1}}\,dx \mbox{ for all } t > 0 \mbox{ and } \lambda >0}$. Then by Lemma \ref{lem4.1} we conclude that  $tu \notin \mathcal{N}_{\lambda,1}$ for all $t>0$. In particular, $\wp'_{u}(t)\neq 0$ for each $t>0$, that is, from \eqref{3.4}, $\wp'_{u}(t)> 0$ for each $t>0$.  
\noindent Now we shall consider  the case  $\displaystyle{\int_{\Omega}\mathfrak{b}(x)|u|^{\mathfrak{m}_{2}}\,dx \leqslant 0}$ and  $\displaystyle{\int_{\Omega}\mathfrak{a}(x)|u|^{\mathfrak{m}_{1}}\,dx > 0}$. From Lemma \ref{lem4.2}$-(a)$ is valid \eqref{4.100}. In particular, the equation $\displaystyle{\mathsf{M}_{u}(t)= \lambda \int_{\Omega}\mathfrak{a}(x)|u|^{\mathfrak{m}_{1}}}\, dx$ admits exactly one solution $t_{1}=t_{1}(u, \lambda)> 0$. Consequently by Lemma \ref{lem4.1}, we conclude  that $t_{1}u \in \mathcal{N}_{\lambda,1}$. Therefore, $ \wp'_{u}(t_1) = 0$.

\noindent  Moreover, since that 
$
\mathsf{M}_{u}(t)= t^{1-\mathfrak{m}_{1}}\wp'_{u}(t)+\lambda  \displaystyle{\int_{\Omega}\mathfrak{a}(x)|u|^{\mathfrak{m}_{1}}\,dx},$ 
we have that $ \displaystyle{
\mathsf{M}'_{u}(t)= t^{1-\mathfrak{m}_{1}}\wp''_{u}(t)+ (1-\mathfrak{m}_{1})t^{-\mathfrak{m}_{1}}\wp'_{u}(t).}$
Thus we conclude that $t_{1}^{1-\mathfrak{m}_{1}}\wp''_{u}(t_{1}) =\mathsf{M}'_{u}(t_1)>0$ and consequently  $t_1u \in \mathcal{N}^{+}_{\lambda,1}$.

\noindent $(b)$ Now consider the case $\displaystyle{\int_{\Omega}\mathfrak{b}(x)|u|^{\mathfrak{m}_{2}}\,dx > 0}$ and  $\displaystyle{\int_{\Omega}\mathfrak{a}(x)|u|^{\mathfrak{m}_{1}}\,dx \leqslant 0}$. Using Lemma \ref{lem4.2}$-(b)$ the function $\mathsf{M}_{u}$ admits a unique critical  point $\tilde{t} > 0$, i.e.,  $\mathsf{M}'_{u}(t)=0$, $ t > 0$ if and only if $t = \tilde{t}$. Moreover, $\tilde{t}$ a global maximum point for $\mathsf{M}_{u}$ such that $\mathsf{M}_{u}(\tilde{t})
>0 $ and $\mathsf{M}_{u}(\infty)=-\infty$. Hence, once $\mathsf{M}_{u}$ is a  decreasing function in $t \in(\tilde{t}, +\infty)$, there exists a unique $t_1>\tilde{t}$ such that $\displaystyle{\mathsf{M}_{u}(t_1)=\lambda \int_{\Omega}\mathfrak{a}(x)|u|^{\mathfrak{m}_{1}}\,dx \mbox{  and  } \mathsf{M}'_{u}(t_1)<0.}$
 Therefore, $t^{1-\mathfrak{m}_1}\wp''_{u}(t_1)=\mathsf{M}'_{u}(t_1)<0$ and we conclude that $t_{1}u \in \mathcal{N}^{-}_{\lambda,1}$.

 \noindent $(c)$ From $\displaystyle{\int_{\Omega}\mathfrak{a}(x)|u|^{\mathfrak{m}_{1}}\,dx > 0}$ we can consider
$ \lambda >0$ small enough in such a way that $\displaystyle{
\mathsf{M}_{u}(\tilde{t})>\lambda\int_{\Omega}\mathfrak{a}(x)|u|^{\mathfrak{m}_{1}}\,dx.}$ Moreover, due to the proof of Lemma \ref{lem4.2}$-(b)$, $\mathsf{M}_{u}$ is increasing in $(0, \tilde{t})$ and decreasing in $( \tilde{t}, +\infty)$. In this sense, there is exactly two points $0<t_{1}=t_{1}(u, \lambda)<\tilde{t}<t_{2}=t_{2}(u, \lambda)$ such that $\displaystyle{\mathsf{M}_{u}(t_{i})=\lambda\int_{\Omega}\mathfrak{a}(x)|u|^{\mathfrak{m}_{1}}\,dx}$ for $i=1,2$. Additionally, we have that $\mathsf{M}'_{u}(t_{1})>0$ and $\mathsf{M}'_{u}(t_{2})<0$. So with the same argument as before we get
$t^{1-\mathfrak{m}_1}\wp''_{u}(t_2)= \mathsf{M}'_{u}(t_2)<0$ and $t^{1-\mathfrak{m}_1}\wp''_{u}(t_1)=\mathsf{M}'_{u}(t_1)>0$.   Then $t_{2}u \in \mathcal{N}^{-}_{\lambda,1}$ and $t_{1}u \in \mathcal{N}^{+}_{\lambda,1}$.
\end{proof}

\begin{lemma}\label{lem4.4}
There exists $\tilde{\lambda} > 0$ such that $\wp_{u}$ takes positive values for all $u \in W^{s,p}_{0}(\Omega)\setminus \{0\}$ whenever $0<\lambda < \tilde{\lambda}$.
\end{lemma}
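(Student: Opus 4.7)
The plan is to reduce the problem to showing that $\mathcal{J}_{\lambda,1}$ is strictly positive on a sphere of a suitable radius in $W^{s,p}_0(\Omega)$, uniformly in $u$. Indeed, if I can exhibit $\rho_0 > 0$ and $\tilde\lambda > 0$ such that $\mathcal{J}_{\lambda,1}(w) > 0$ whenever $\|w\|_{W^{s,p}_0(\Omega)} = \rho_0$ and $0 < \lambda < \tilde\lambda$, then for any $u \in W^{s,p}_0(\Omega) \setminus \{0\}$ the choice $t = \rho_0/\|u\|_{W^{s,p}_0(\Omega)}$ gives $\|tu\|_{W^{s,p}_0(\Omega)} = \rho_0$ and hence $\wp_u(t) = \mathcal{J}_{\lambda,1}(tu) > 0$, which is exactly the conclusion.

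To produce the uniform lower bound on the sphere, I combine assumption $(a_2)$ with $(a'_3)$-$(i)$ to obtain the pointwise estimate $\mathscr{A}(\tau) \geq \frac{c_{\mathcal{A}}}{p}|\tau|^{p}$, and then use $(\mathcal{K})$ to deduce
$$
\Phi(w) \;\geq\; \frac{c_{\mathcal{A}} b_{0}}{p}\,\|w\|^{p}_{W^{s,p}_0(\Omega)}.
$$
Since $\mathfrak{a},\mathfrak{b} \in L^\infty(\Omega)$ and by \eqref{const} we have $\mathfrak{m}_1, \mathfrak{m}_2 \in (1, p^{\star}_s)$, the continuous embeddings $W^{s,p}_0(\Omega) \hookrightarrow L^{\mathfrak{m}_i}(\Omega)$ yield constants $C_1, C_2 > 0$ (independent of $u$) with $\int_\Omega \mathfrak{a}|w|^{\mathfrak{m}_1}\,dx \leq C_1\|w\|^{\mathfrak{m}_1}$ and $\int_\Omega \mathfrak{b}|w|^{\mathfrak{m}_2}\,dx \leq C_2\|w\|^{\mathfrak{m}_2}$. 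Collecting,
$$
\mathcal{J}_{\lambda,1}(w) \;\geq\; \|w\|^{\mathfrak{m}_1}\!\left[\frac{c_{\mathcal{A}} b_{0}}{p}\|w\|^{p-\mathfrak{m}_1} - \frac{C_2}{\mathfrak{m}_2}\|w\|^{\mathfrak{m}_2 - \mathfrak{m}_1} - \frac{\lambda C_1}{\mathfrak{m}_1}\right].
$$

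Now the function $g(\rho) := \frac{c_{\mathcal{A}} b_{0}}{p}\rho^{\,p-\mathfrak{m}_1} - \frac{C_2}{\mathfrak{m}_2}\rho^{\mathfrak{m}_2-\mathfrak{m}_1}$ is positive on a right neighborhood of $0$ because $\mathfrak{m}_1 < p < \mathfrak{m}_2$, and since $\mathfrak{m}_2 - \mathfrak{m}_1 > p - \mathfrak{m}_1 > 0$ it attains a unique strictly positive maximum $\eta_0 := g(\rho_0) > 0$ at an explicit $\rho_0 > 0$ obtained by solving $g'(\rho_0) = 0$. Setting $\tilde\lambda := \dfrac{\mathfrak{m}_1 \eta_0}{2 C_1}$, for every $\lambda \in (0, \tilde\lambda)$ the bracket above, evaluated at $\|w\| = \rho_0$, exceeds $\eta_0/2$, giving $\mathcal{J}_{\lambda,1}(w) \geq \frac{\rho_0^{\mathfrak{m}_1}\eta_0}{2} > 0$ on the whole sphere and concluding the argument by the scaling above.

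The only subtle point is ensuring that $\rho_0$ and $\tilde\lambda$ do not depend on $u$: this is guaranteed because the embedding constants $C_1, C_2$ and the structural constants $c_{\mathcal{A}}, b_0$ are universal. The estimate also tacitly uses that in the constant-exponent setting the modular and the norm agree (so the $\Phi$-bound is valid globally, not only for small $\|w\|$ as in the variable-exponent case of Proposition \ref{lw0}), which makes the optimization over $\rho$ straightforward.
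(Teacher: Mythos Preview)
Your argument is correct and in fact more streamlined than the paper's. The paper proceeds by a case split on the sign of $\int_\Omega \mathfrak{b}(x)|u|^{\mathfrak{m}_2}\,dx$: when this integral is nonpositive it simply notes $\wp_u(t)\to+\infty$ as $t\to+\infty$; when it is positive it introduces an auxiliary function $\mathsf{h}_u(t)=\Phi(tu)-\frac{t^{\mathfrak{m}_2}}{\mathfrak{m}_2}\int_\Omega \mathfrak{b}|u|^{\mathfrak{m}_2}\,dx$, locates its critical point, and extracts a uniform lower bound $\delta>0$ for $\mathsf{h}_u$ at that point via \eqref{4.12}--\eqref{4.13}, from which $\wp_u(t)\geq \mathsf{h}_u(t)^{\mathfrak{m}_1/p}(\mathsf{h}_u(t)^{1-\mathfrak{m}_1/p}-\lambda\tilde D)$ yields positivity for small $\lambda$. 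Your sphere argument bypasses the case distinction entirely by bounding $\mathcal{J}_{\lambda,1}$ from below by a one-variable function of $\|w\|$ alone and optimizing once; this is cleaner and gives the same qualitative conclusion with an explicit positive constant $\rho_0^{\mathfrak{m}_1}\eta_0/2$. The trade-off is that the paper's Case~2 machinery (the function $\mathsf{h}_u$ and the bound $\mathsf{h}_u(t_0)\geq\delta$) is reused verbatim in the proof of Lemma~\ref{lem4.5}, so if you adopt your route here you would need to either reproduce that estimate separately for Lemma~\ref{lem4.5} or adapt its proof to use your sphere bound instead.
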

\begin{proof}
We shall split the proof into two cases.

\noindent\textbf{\textit{Case 1:}}  $\displaystyle{\int_{\Omega}\mathfrak{b}(x)|u|^{\mathfrak{m}_{2}}\,dx \leqslant 0}.$

\noindent From $(a_2)$, $(a_3')$-$(i)$, and $(\mathcal{K})$, we get
\begin{equation*}
\begin{split}
\wp_{u}(t) \geqslant &  \frac{c_{\mathcal{A}}b_{0}t^{p}}{p}\int_{\mathbb{R}^{N}\times \mathbb{R}^{N}}\frac{|u(x)-u(y)|^{p}}{|x-y|^{N+sp}}\,dx\,dy - \lambda \frac{t^{\mathfrak{m}_{1}}}{\mathfrak{m}_1}\int_{\Omega}\mathfrak{a}(x)|u|^{\mathfrak{m}_1}\,dx  - \frac{t^{\mathfrak{m}_{2}}}{\mathfrak{m}_{2}}\int_{\Omega}\mathfrak{b}(x)|u|^{\mathfrak{m}_{2}}\,dx.
\end{split}
\end{equation*} 
Since $\mathfrak{m_1}<p<\mathfrak{m_2}$ and  $\displaystyle{\int_{\Omega}\mathfrak{b}(x)|u|^{\mathfrak{m}_{2}}\,dx \leqslant 0}$, it follows that
$\displaystyle{\lim_{t \to +\infty}\wp_{u}(t)=+\infty}$. In particular, there is $\overline{t}>0$ such that $\wp_{u}(t)>0$ for each $t> \overline{t}$.

\noindent\textbf{\textit{Case 2:}}  $\displaystyle{\int_{\Omega}\mathfrak{b}(x)|u|^{\mathfrak{m}_{2}}\,dx > 0}.$

\noindent First, let $\mathsf{h}_{u}:\mathbb{R}\to \mathbb{R}$   defined by 
$$\mathsf{h}_{u}(t):= \int_{\mathbb{R}^{N}\times \mathbb{R}^{N}}  \mathscr{A}(tu(x)-tu(y))K(x,y)\,dx\,dy -\frac{t^{\mathfrak{m}_{2}}}{\mathfrak{m}_{2}} \int_{\Omega}\mathfrak{b}(x)|u|^{\mathfrak{m}_{2}}\,dx \mbox{ for all }  t \in \mathbb{R}$$
an function of class $C^{1}(\mathbb{R}, \mathbb{R})$. Note that
$$\mathsf{h}'_{u}(t)= \int_{\mathbb{R}^{N}\times \mathbb{R}^{N}}  \mathcal{A}(tu(x)-tu(y))(u(x)-u(y))K(x,y)\,dx\,dy -t^{\mathfrak{m}_{2}-1} \int_{\Omega}\mathfrak{b}(x)|u|^{\mathfrak{m}_{2}}\,dx.$$
Then $\mathsf{h}_{u}$ admits a critical point $t > 0$, this is, $\mathsf{h}'_{u}(t)=0$
for some point $t > 0$ which is a local maximum point for $\mathsf{h}_{u}$ (see Lemma \ref{lem4.3} items $(b)$ and $(c)$). Moreover, for all $t > 0$, we observe that $\mathsf{h}'_{u}(t)=0$ if and only if 
\begin{equation}\label{4.11}
\frac{t}{\mathfrak{m}_{2}}\int_{\mathbb{R}^{N}\times \mathbb{R}^{N}}  \mathcal{A}(tu(x)-tu(y))(u(x)-u(y))K(x,y)\,dx\,dy= \frac{t^{\mathfrak{m}_{2}}}{\mathfrak{m}_{2}}\int_{\Omega}\mathfrak{b}(x)|u|^{\mathfrak{m}_{2}}\,dx.
\end{equation}
Thus using $(a_2)$, $(a'_3)$-$(i)$,  $(\mathcal{K})$, \eqref{const}, and \eqref{4.11}, we obtain
\begin{equation}\label{4.12}
\begin{split}
\mathsf{h}_{u}(t) \geqslant   \Bigg( \frac{1}{p} - \frac{1}{\mathfrak{m}_{2}} \Bigg)c_{\mathcal{A}}b_{0} \|tu\|^{p}_{W^{s,p}_{0}(\Omega)} >  0.
\end{split}
\end{equation}
 Now,  using $(a_{2})$, $(\mathcal{K})$,   the continuous embedding  $W^{s,p}_{0}(\Omega)\hookrightarrow L^{\mathfrak{m}_{2}}(\Omega)$,  \eqref{3.3} \eqref{3.31}, and  \eqref{4.11}, it follows that
\begin{equation}\label{4.13}
\|tu\|_{W^{s,p}_{0}(\Omega)}\geqslant \Bigg( \frac{c_{A}b_{0}}{C^{\mathfrak{m}_{2}}_{\mathfrak{m}_{2}}\|\mathfrak{b}^{+}\|_{\infty}}\Bigg)^{\frac{1}{\mathfrak{m}_{2}
-p}}.
\end{equation}
Therefore, by \eqref{4.12} and \eqref{4.13}, we obtain that
\begin{equation*}
\begin{split}
\mathsf{h}_{u}(t) \geqslant  \Bigg( \frac{1}{p} - \frac{1}{\mathfrak{m}_{2}} \Bigg)c_{\mathcal{A}}b_{0}\Bigg( \frac{c_{A}b_{0}}{C^{\mathfrak{m}_{2}}_{\mathfrak{m}_{2}}\|\mathfrak{b}^{+}\|_{\infty}}\Bigg)^{\frac{p}{\mathfrak{m}_{2}
-p}}:=\delta > 0 \mbox{ for all } u \in W^{s,p}_{0}(\Omega)\setminus \{0\} \mbox{  and }  t \in \mathbb{R}.
\end{split}
\end{equation*}

Now, by the continuous embedding  $W^{s,p}_{0}(\Omega)\hookrightarrow L^{\mathfrak{m}_{1}}(\Omega)$ using  \eqref{3.3} and \eqref{4.12}, we infer that
\begin{equation*}
\begin{split}
\int_{\Omega}\frac{\mathfrak{a}(x)|tu|^{\mathfrak{m}_{1}}}{\mathfrak{m}_{1}}\,dx \leqslant & \frac{\|\mathfrak{a}^{+}\|_{\infty}C^{\mathfrak{m}_{1}}_{\mathfrak{m}_{1}}\|tu\|^{\mathfrak{m}_{1}}_{W^{s,p}_{0}(\Omega)}}{\mathfrak{m}_{1}}\\ \leqslant & \frac{\|\mathfrak{a}^{+}\|_{\infty}C^{\mathfrak{m}_{1}}_{\mathfrak{m}_{1}}}{\mathfrak{m}_{1}\big(\big( \frac{1}{p} - \frac{1}{\mathfrak{m}_{2}} \big)c_{\mathcal{A}}b_{0}\big)^{\frac{\mathfrak{m}_{1}}{p}}}\big(\mathsf{h}_{u}(t)\big) ^{\frac{\mathfrak{m}_{1}}{p}}:= \tilde{D}\big(\mathsf{h}_{u}(t)\big) ^{\frac{\mathfrak{m}_{1}}{p}},
\end{split}
\end{equation*}
where $\tilde{D}=\frac{\|\mathfrak{a}^{+}\|_{\infty}C^{\mathfrak{m}_{1}}_{\mathfrak{m}_{1}}}{\mathfrak{m}_{1}\big(\big( \frac{1}{p} - \frac{1}{\mathfrak{m}_{2}} \big)c_{\mathcal{A}}b_{0}\big)^{\frac{\mathfrak{m}_{1}}{p}}}>0.$

\noindent Therefore,
\begin{equation*}
\begin{split}
\wp_{u}(t)=  \mathsf{h}_{u}(t) -\frac{\lambda t ^{\mathfrak{m}_{1}}}{\mathfrak{m}_{1}}\int_{\Omega}\mathfrak{a}(x)|u|^{\mathfrak{m}_{1}}\,dx \geqslant  \big(\mathsf{h}_{u}(t)\big) ^{\frac{\mathfrak{m}_{1}}{p}}\big[\big(\mathsf{h}_{u}(t)\big) ^{1-\frac{\mathfrak{m}_{1}}{p}} -\lambda\tilde{D}\big].
\end{split}
\end{equation*}
Since $\mathsf{h}_{u}(t) > \delta$, taking 
$\tilde{\lambda}= \frac{\delta^{{1-\frac{\mathfrak{m}_{1}}{p}}}}{2 \tilde{D}}> \lambda$
we obtain that $\wp_{u}(t)> \frac{1}{2}\delta>0$. This  conclude the proof of Lemma.
\end{proof}

\begin{lemma}\label{lem4.5}
There exist $\tilde{\delta}>0$ and $ \tilde{\lambda}>0$ in such a way that $\mathcal{J}_{\lambda,1}(u)\geqslant\tilde{\delta}$ for all $u \in \mathcal{N}_{\lambda,1}^{-} $ where $0< \lambda <  \tilde{\lambda}$.
\end{lemma}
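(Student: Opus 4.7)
The plan is a two-step argument. Step 1 establishes a positive lower bound $\|u\|_{W^{s,p}_0(\Omega)} \geqslant C_3$, uniform in $u \in \mathcal{N}_{\lambda,1}^-$ and in $\lambda$. Step 2 then combines this with a suitable manipulation of $\mathcal{J}_{\lambda,1}$ on $\mathcal{N}_{\lambda,1}$ to extract the positive constant $\tilde{\delta}$.

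For Step 1, I would fix $u \in \mathcal{N}_{\lambda,1}^-$, so that $\wp''_u(1) < 0$. Starting from the first formulation of $\wp''_u(1)$ in Remark \ref{remark2} and applying the lower bound $(l-1)\mathcal{A}(t)t \leqslant t^2\mathcal{A}'(t)$ from $(a'_3)$-$(ii)$, I obtain
\[
0 > \wp''_u(1) \geqslant (\mathfrak{m}_1 - \mathfrak{m}_2)\int_{\Omega}\mathfrak{b}(x)|u|^{\mathfrak{m}_2}\,dx + (l-\mathfrak{m}_1)\int_{\mathbb{R}^N\times\mathbb{R}^N}\mathcal{A}(u(x)-u(y))(u(x)-u(y))K(x,y)\,dx\,dy.
\]
Since $l - \mathfrak{m}_1 > 0$ by \eqref{const}, this inequality forces $\int_{\Omega}\mathfrak{b}(x)|u|^{\mathfrak{m}_2}\,dx > 0$. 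Applying $(a_2)$ and $(\mathcal{K})$ to bound the second integral from below by $c_{\mathcal{A}} b_0 \|u\|_{W^{s,p}_0(\Omega)}^p$, and the continuous embedding $W^{s,p}_0(\Omega) \hookrightarrow L^{\mathfrak{m}_2}(\Omega)$ (together with $\mathfrak{b}\in L^\infty(\Omega)$) to bound the first integral above by $\|\mathfrak{b}^+\|_\infty C_{\mathfrak{m}_2}^{\mathfrak{m}_2}\|u\|_{W^{s,p}_0(\Omega)}^{\mathfrak{m}_2}$, and using $\mathfrak{m}_2 > p$ from \eqref{const}, I solve to arrive at
\[
\|u\|_{W^{s,p}_0(\Omega)}^{\mathfrak{m}_2 - p} > \frac{(l-\mathfrak{m}_1)c_{\mathcal{A}} b_0}{(\mathfrak{m}_2 - \mathfrak{m}_1)\|\mathfrak{b}^+\|_\infty C_{\mathfrak{m}_2}^{\mathfrak{m}_2}} =: C_2,
\]
so that $\|u\|_{W^{s,p}_0(\Omega)} \geqslant C_2^{1/(\mathfrak{m}_2 - p)} =: C_3 > 0$, uniformly in $u \in \mathcal{N}_{\lambda,1}^-$ and $\lambda > 0$.

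For Step 2, I use the representation \eqref{not2} of $\mathcal{J}_{\lambda,1}$ on $\mathcal{N}_{\lambda,1}$. The bound $\mathcal{A}(t)t \leqslant p\mathscr{A}(t)$ from $(a'_3)$-$(i)$ gives $\int \mathcal{A}(u(x)-u(y))(u(x)-u(y))K\,dx\,dy \leqslant p\Phi(u)$, while combining $(a_2)$ with $(a'_3)$-$(i)$ yields $\mathscr{A}(t) \geqslant (c_{\mathcal{A}}/p)|t|^p$, hence $\Phi(u) \geqslant (c_{\mathcal{A}} b_0/p)\|u\|_{W^{s,p}_0(\Omega)}^p$. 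Since $\tfrac{1}{\mathfrak{m}_2} - \tfrac{1}{\mathfrak{m}_1} < 0$, estimating $|\int_\Omega \mathfrak{a}(x)|u|^{\mathfrak{m}_1}\,dx| \leqslant \|\mathfrak{a}\|_\infty C_{\mathfrak{m}_1}^{\mathfrak{m}_1}\|u\|_{W^{s,p}_0(\Omega)}^{\mathfrak{m}_1}$ via the embedding, I reach
\[
\mathcal{J}_{\lambda,1}(u) \geqslant A\|u\|_{W^{s,p}_0(\Omega)}^p - \lambda B\|u\|_{W^{s,p}_0(\Omega)}^{\mathfrak{m}_1} = \|u\|_{W^{s,p}_0(\Omega)}^{\mathfrak{m}_1}\bigl(A\|u\|_{W^{s,p}_0(\Omega)}^{p-\mathfrak{m}_1} - \lambda B\bigr),
\]
with $A := (1 - p/\mathfrak{m}_2)(c_{\mathcal{A}} b_0/p) > 0$ and $B := (1/\mathfrak{m}_1 - 1/\mathfrak{m}_2)\|\mathfrak{a}\|_\infty C_{\mathfrak{m}_1}^{\mathfrak{m}_1} > 0$. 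Since $p > \mathfrak{m}_1$ by \eqref{const} and $\|u\|_{W^{s,p}_0(\Omega)} \geqslant C_3$ by Step 1, the choice $\tilde{\lambda} := AC_3^{p-\mathfrak{m}_1}/(2B)$ keeps the bracket above $AC_3^{p-\mathfrak{m}_1}/2$, giving the conclusion with $\tilde{\delta} := AC_3^p/2 > 0$ for all $0 < \lambda < \tilde{\lambda}$.

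The main obstacle is Step 1: since $\mathfrak{b}$ is sign-changing, neither side of $\wp''_u(1)$ has an a priori definite sign, so one must pick compatibly among the two representations of $\wp''_u(1)$ in Remark \ref{remark2} and the two directions in $(a'_3)$-$(ii)$. The strict inequality $l > \mathfrak{m}_1$ in \eqref{const} is indispensable here, as it is exactly what makes the coefficient $(l-\mathfrak{m}_1)$ positive in the derived inequality, thus forcing $\int_\Omega \mathfrak{b}(x)|u|^{\mathfrak{m}_2}\,dx > 0$ on $\mathcal{N}_{\lambda,1}^-$ and producing a quantitative $\lambda$-independent lower bound on $\|u\|_{W^{s,p}_0(\Omega)}$.
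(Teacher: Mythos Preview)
Your proof is correct, and it takes a genuinely different route from the paper's.

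The paper argues via the fibering geometry developed in Lemmas~\ref{lem4.3} and~\ref{lem4.4}: for $u\in\mathcal{N}_{\lambda,1}^{-}$ the point $t=1$ is the global maximum of $\wp_u$, so $\mathcal{J}_{\lambda,1}(u)=\wp_u(1)\geqslant\wp_u(t_0)$, where $t_0$ is the maximizer of the auxiliary function $\mathsf{h}_u$; Lemma~\ref{lem4.4} then supplies a uniform lower bound $\mathsf{h}_u(t_0)\geqslant\delta>0$, and the estimate $\wp_u(t_0)\geqslant \mathsf{h}_u(t_0)^{\mathfrak{m}_1/p}\bigl(\mathsf{h}_u(t_0)^{1-\mathfrak{m}_1/p}-\lambda\tilde D\bigr)$ finishes the job for $\lambda$ small. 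Your argument bypasses $\mathsf{h}_u$ and the fibering maximum entirely: Step~1 extracts a uniform norm lower bound on $\mathcal{N}_{\lambda,1}^{-}$ directly from $\wp_u''(1)<0$ (this is exactly the inequality chain \eqref{3.44}--\eqref{3.6} from the proof of Lemma~\ref{lem3.1}(1), with ``$=0$'' replaced by ``$<0$''), and Step~2 feeds that bound into the Nehari representation~\eqref{not2}. Your approach is more self-contained and does not need Lemma~\ref{lem4.4}; the paper's approach, on the other hand, reuses machinery already built and makes the connection to the fibering picture explicit. Both yield explicit constants $\tilde\lambda,\tilde\delta$ depending only on the structural data.
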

\begin{proof}
Fix $u \in \mathcal{N}_{\lambda,1}^{-}$, consequently  $\mathcal{J}_{\lambda,1}$ admits a global maximum in $t = 1$ and $\displaystyle{\int_{\Omega}\mathfrak{b}(x)|u|^{\mathfrak{m}_{2}}\,dx > 0}.$  Indeed, for $u \in \mathcal{N}_{\lambda,1}^{-}$  the   fibering map has a behavior described in Lemma \ref{lem4.3}. On the other hand, using Lemma \ref{lem4.4} there is $t_{0}>0$ such that $\mathsf{h}_{u}(t_{0})>\mathsf{h}_{u}(t)$  for each $t>0$.
Hence
\begin{equation*}
\mathcal{J}_{\lambda,1}(u)=\wp_{u}(1)\geqslant \wp_{u}(t)\geqslant\mathsf{h}_{u}(t_{0}) ^{\frac{\mathfrak{m}_{1}}{p}}(\mathsf{h}_{u}(t_{0}) ^{1-\frac{\mathfrak{m}_{1}}{p}} -\lambda\tilde{D})\geqslant \delta ^{\frac{\mathfrak{m}_{1}}{p}}(\delta ^{1-\frac{\mathfrak{m}_{1}}{p}} -\lambda\tilde{D})
\end{equation*}
 for $\delta > 0$  obtained in the  Lemma \ref{lem4.4}. Thus, taking $\tilde{\delta}=\delta ^{\frac{\mathfrak{m}_{1}}{p}}(\delta ^{1-\frac{\mathfrak{m}_{1}}{p}} -\lambda\tilde{D})$, the proof for this
lemma is completed by choosing $0<\lambda<\tilde{\lambda}$ small enough.  
\end{proof}

\subsection{Proof of Theorem \ref{con-convex 1}}
\begin{proof} [Proof of Theorem \ref{con-convex 1}]
From Proposition \ref{propo3.1} the functional $\mathcal{J}_{\lambda,1}$  is bounded below on $\mathcal{N}_{\lambda,1}$ and    so on $\mathcal{N}_{\lambda,1}^{+}$, then there  exists a minimizing
sequence $(u_{k})_{k\in \mathbb{N}}\subset \mathcal{N}_{\lambda,1}^{+} $ such that
\begin{equation}\label{quadr}
\lim_{k \to +\infty} \mathcal{J}_{\lambda, 1}(u_{k})= \inf_{u \in \mathcal{N}_{\lambda,1}^{+}} \mathcal{J}_{\lambda, 1}(u):=  \mathbb{J}^{+}.
\end{equation}
Again by  Proposition \ref{propo3.1} the functional $\mathcal{J}_{\lambda,1}$ is coercive on $\mathcal{N}_{\lambda,1}^{+}$, consequently  $(u_{k})_{k\in \mathbb{N}}$ is a bounded sequence in $W^{s,p}_{0}(\Omega)$. Since $W^{s,p}_{0}(\Omega)$ is a reflexive Banach space, there exists $u_0 \in W^{s,p}_{0}(\Omega)$ such that, up to a subsequence $u_{k}\rightharpoonup u_{0}$  in $W^{s,p}_{0}(\Omega)$, 
 \begin{equation*}
 u_{k}(x)\to u_{0}(x) \mbox{ a.e. } x \in \Omega,\hspace{0.2cm} u_{k}\to u_{0} \mbox{ in } L^{\mathfrak{m}_{1}}(\Omega) \hspace{0.2cm}\mbox{ and } \hspace{0.2cm} u_{k}\to u_{0} \mbox{ in } L^{\mathfrak{m}_{2}}(\Omega) \mbox{ as } k \to +\infty. 
 \end{equation*}
Consequently as $\mathfrak{a}, \mathfrak{b} \in L^{\infty}(\Omega)$, we conclude that
\begin{equation}\label{31.0}
\begin{split}
\int_{\Omega}\mathfrak{a}(x)|u_{k}|^{\mathfrak{m}_{1}}\,dx
\to \int_{\Omega}\mathfrak{a}(x)|u_{0}|^{\mathfrak{m}_{1}}\,dx,\,\,\,\,\,\int_{\Omega}\mathfrak{b}(x)|u_{k}|^{\mathfrak{m}_{2}}\,dx \to \int_{\Omega}\mathfrak{b}(x)|u_{0}|^{\mathfrak{m}_{2}}\,dx \mbox{ as } k \to +\infty.
\end{split}
\end{equation}
 \noindent Now suppose by contradiction that  $u_{k}\nrightarrow u_{0}$ in  $W^{s,p}_{0}(\Omega)\mbox{ as } k \to +\infty $. Hence from Lemma \ref{ll1}
\begin{equation} \label{31}
\begin{split}
\int_{\mathbb{R}^{N}\times \mathbb{R}^{N}} \mathcal{A}(u_{0}(x)-u_{0}(y))(u_{0}(x)-u_{0}(y))K(x,y)\,dx\,dy < \\ \liminf_{k\to +\infty}\int_{\mathbb{R}^{N}\times \mathbb{R}^{N}} \mathcal{A}(u_{k}(x)-u_{k}(y))(u_{k}(x)-u_{k}(y))K(x,y)\,dx\,dy.
\end{split}
\end{equation} 
Now, fix $t_{0}>0$ such that $t_{0}u_{0}\in \mathcal{N}^{+}_{\lambda,1}$ and using \eqref{31.0} and   \eqref{31}, we get
\begin{equation*}
\begin{split}
0=\wp'_{u_{0}}(t_{0}) <   \liminf_{k\to +\infty} \wp'_{u_{k}}(t_{0}).
\end{split}
\end{equation*}
Consequently $\wp'_{u_{k}}(t_{0})>0$ for all $k \geqslant k_{0}$ where $k_{0}$ is big enough. 
\\
 Now, note that for all $u_k \in \mathcal{N}^{+}_{\lambda,1}$, using Lemma \ref{lem4.3}, we obtain that $\wp'_{u_k}(t)<0$ for all $t \in (0,1)$ and $\wp'_{u_k}(1)=0$. Indeed, as $u_{k} \in \mathcal{N}^{+}_{\lambda,1}$ then $\wp''_{u_{k}}(1)>0$ and  $\wp'_{u_{k}}(1)=0$ for all $k \in \mathbb{N}$. Moreover using $(a'_{3})$-$(i)$ and \eqref{const}, we infer that
\begin{equation}\label{4.51}
\begin{split}
  \wp''_{u_{k}}(1) +& (\mathfrak{m}_{2}-m )\int_{\mathbb{R}^{N}\times \mathbb{R}^{N}} \mathcal{A}(u_{k}(x)-u_{k}(y))(u_{k}(x)-u_{k}(y))K(x,y)\,dx\,dy   \\ \leqslant&  \lambda(\mathfrak{m}_{2}-\mathfrak{m}_{1}) \int_{\Omega}\mathfrak{a}(x)|tu_{k}|^{\mathfrak{m}_1}\,dx  \mbox{ for all } k \in \mathbb{N}.
\end{split}
\end{equation}
 By \eqref{4.51}, \eqref{const}, and the fact that $\wp''_{u_{k}}(1)>0$, we conclude that  $\displaystyle{\int_{\Omega}\mathfrak{a}(x)|tu_{k}|^{\mathfrak{m}_1}\,dx>0}$. Therefore, by Lemma \ref{lem4.3} items $(b)$ and $(c)$ we obtain that $\wp''_{u_{k}}(t)<0$ for $t \in(0,1)$ and $\wp'_{u_k}(1)=0$ for all $k \in \mathbb{N}$. Consequently, $t_{0}>1$. 
 \\
 On the other hand, as $t_{0}u_{0} \in \mathcal{N}^{+}_{\lambda,1}$, $\wp_{u_{0}}(t)$ is decreasing   for $t \in (0,t_{0})$. 
 Indeed, note that if exists  $u \in W^{s,p}_{0}(\Omega)$ and $\displaystyle{\int_{\Omega}\mathfrak{a}(x)|tu|^{\mathfrak{m}_1}\,dx>0}$, by Lemma \ref{lem4.3}, should exist $t_{1}(u,\lambda) \in \mathcal{N}^{+}_{\lambda,1}$ such that  $\wp_{u}(t_{1}(u,\lambda))= \mathcal{J}_{\lambda,1}(t_{1}(u,\lambda)u)<0$. Therefore,
 \begin{equation}\label{estrela}
 \inf_{u \in \mathcal{N}^{+}_{\lambda,1}}\mathcal{J}_{\lambda,1}(u)<0.
 \end{equation}
Now observe that
\begin{equation}\label{4.520}
\begin{split}
\lambda\Bigg(\frac{1}{\mathfrak{m}_2} - \frac{1}{\mathfrak{m}_1} \Bigg)\int_{\Omega}\mathfrak{a}(x)|u_{k}|^{\mathfrak{m}_1}\,dx = & \int_{\mathbb{R}^{N}\times \mathbb{R}^{N}} \frac{\mathcal{A}(u_{k}(x)-u_{k}(y))(u_{k}(x)-u_{k}(y))K(x,y)}{\mathfrak{m}_{2}}\,dx\,dy \\ & -  \int_{\mathbb{R}^{N}\times \mathbb{R}^{N}} \mathscr{A}(u_{k}(x)-u_{k}(y))K(x,y)\,dx\,dy + \mathcal{J}_{\lambda,1}(u_k)
\end{split}
\end{equation}
for all $k \in \mathbb{N}$.
\\
Then taking $\displaystyle{\liminf}$ in \eqref{4.520} and using $(a_{3})$-$(i)$, \eqref{quadr}, \eqref{31.0}, \eqref{31}, and \eqref{estrela},  we obtain that
\begin{equation*}\label{4.52}
\begin{split}
\lambda\Bigg(\frac{1}{\mathfrak{m}_1} - \frac{1}{\mathfrak{m}_2} \Bigg)\int_{\Omega}\mathfrak{a}(x)|u_{0}|^{\mathfrak{m}_1}\,dx \geqslant & \int_{\mathbb{R}^{N}\times \mathbb{R}^{N}}\frac{ \mathcal{A}(u_{0}(x)-u_{0}(y))(u_{0}(x)-u_{0}(y))K(x,y)}{p}\,dx\,dy \\
&- \int_{\mathbb{R}^{N}\times \mathbb{R}^{N}} \frac{\mathcal{A}(u_{0}(x)-u_{0}(y))(u_{0}(x)-u_{0}(y))K(x,y)}{\mathfrak{m}_{2}}\,dx\,dy \\ &- \inf_{u \in \mathcal{N}^{+}_{\lambda,1}}\mathcal{J}_{\lambda,1}(u).
\end{split}
\end{equation*}
Thus  $\displaystyle{\int_{\Omega}\mathfrak{a}(x)|u_{0}|^{\mathfrak{m}_1}\,dx>0}$. Consequently, as $t_{0}u_{0} \in \mathcal{N}^{+}_{\lambda,1} $,  from Lemma \ref{lem4.3} we obtain that $ \wp_{u_{0}}$ is decreasing for $t \in (0, t_{0})$.
Therefore,
\begin{equation*}
\mathcal{J}_{\lambda,1}(t_{0}u_{0})< \mathcal{J}_{\lambda,1}(u_{0}) <\liminf_{k\to +\infty}\mathcal{J}_{\lambda,1}(u_{k}):= \mathbb{J}^{+}
\end{equation*}
which contradicts \eqref{quadr}. Then $ u_{k}\to u_{0}$ in $W^{s,p}_{0}(\Omega)$ as $k \to  + \infty$. Hence, from Lemma \ref{ll1} and \eqref{31.0}, we get
\begin{equation*}
\mathcal{J}_{\lambda,1}(u_{0}) = \lim_{k \to +\infty} \mathcal{J}_{\lambda,1}(u_{k}),
\end{equation*}
that is, $u_{0}$  is a minimizer for $\mathcal{J}_{\lambda,1}$ in  $\mathcal{N}^{+}_{\lambda,1}$. Moreover using \eqref{estrela}, we get $\displaystyle{\mathcal{J}_{\lambda,1}(u_{0})=\inf_{u \in \mathcal{N}^{+}_{\lambda,1}}\mathcal{J}_{\lambda,1}(u)<0}.$
This finishes the proof.
\end{proof}
\subsection{Proof Theorem \ref{con-convex 2}}
\begin{proof} [Proof Theorem \ref{con-convex 2}]
From Lemma \ref{lem4.5} there exists $\tilde{\delta} > 0$ such that $\mathcal{J}_{\lambda,1}(u)\geqslant \tilde{\delta}$ for all $u \in \mathcal{N}^{-}_{\lambda,1} $. Thus
\begin{equation}\label{n6}
\mathbb{J}^{-}:= \inf_{u \in \mathcal{N}^{-}_{\lambda,1} }\mathcal{J}_{\lambda,1}(u)\geqslant \tilde{\delta}>0.
\end{equation}
Let a minimizer sequence $(u_{k})_{k\in \mathbb{N}} \subset \mathcal{N}^{-}_{\lambda,1} $, this is, $\displaystyle{\lim_{k\to +\infty}\mathcal{J}_{\lambda,1}(u_{k})=\mathbb{J}^{-}}$. From Proposition \ref{propo3.1}, $\mathcal{J}_{\lambda,1}$ is coercive on $\mathcal{N}_{\lambda,1} $ and also on  $\mathcal{N}^{-}_{\lambda,1} $, then  $(u_{k})_{k\in \mathbb{N}}$ is a bounded sequence in $ W^{s,p}_{0}(\Omega)$.
Since $W^{s,p}_{0}(\Omega)$ is a reflexive Banach space, there exists $\tilde{u}_{0} \in W^{s,p}_{0}(\Omega)$ such that, up to a subsequence $u_{k}\rightharpoonup \tilde{u}_{0}$  in $W^{s,p}_{0}(\Omega)$,
 \begin{equation*}
 u_{k}(x)\to \tilde{u}_{0} (x) \mbox{ a.e. } x \in \Omega,\hspace{0.2cm} u_{k}\to \tilde{u}_{0}  \mbox{ in } L^{\mathfrak{m}_{1}}(\Omega) \hspace{0.2cm}\mbox{ and } \hspace{0.2cm} u_{k}\to \tilde{u}_{0}  \mbox{ in } L^{\mathfrak{m}_{2}}(\Omega) \mbox{ as } k \to +\infty. 
 \end{equation*}
Consequently as $\mathfrak{a}, \mathfrak{b} \in L^{\infty}(\Omega)$ we conclude that is valid \eqref{31.0}.
We want to prove that $u_{k}\to \tilde{u}_{0} $ in  $W^{s,p}_{0}(\Omega)$ $\mbox{ as } k \to +\infty$ and conclude that
\begin{equation*}
\mathcal{J}_{\lambda,1}(\tilde{u}_{0} )= \lim_{k\to +\infty}\mathcal{J}_{\lambda,1}(u_{k})= \inf_{u \in \mathcal{N}^{-}_{\lambda,1} }\mathcal{J}_{\lambda,1}(u).
\end{equation*}
Now, using \eqref{not1},  $(a'_{3})$-$(i)$, $(\mathcal{K})$, and \eqref{const} note that

\begin{equation}\label{5.101}
\begin{split}
\mathcal{J}_{\lambda,1}(u_{k})  \leqslant & \Bigg( 1- \frac{l}{\mathfrak{m}_{1}} \Bigg)\int_{\mathbb{R}^{N}\times\mathbb{R}^{N}}\mathscr{A}(u_{k}(x)-u_{k}(y))K(x,y)\,dx\,dy \\ &+\Bigg(\frac{1}{\mathfrak{m}_{1}}- \frac{1}{\mathfrak{m}_{2}}\Bigg)\int_{\Omega}\mathfrak{b}(x)|u_{k}|^{\mathfrak{m}_{2}}\,dx \mbox{ for all } k \in \mathbb{N}.
\end{split}
\end{equation}
Hence, from \eqref{5.101},  $(a'_{3})$-$(i)$, $(\mathcal{K})$, and \eqref{const}, we obtain that
\begin{equation}\label{5.103}
\begin{split}
\Bigg(\frac{1}{\mathfrak{m}_{1}}- \frac{1}{\mathfrak{m}_{2}}\Bigg)\int_{\Omega}\mathfrak{b}(x)|u_{k}|^{\mathfrak{m}_{2}}\,dx  \geqslant &  \int_{\mathbb{R}^{N}\times\mathbb{R}^{N}}\frac{\mathcal{A}(u_{k}(x)-u_{k}(y))(u_{k}(x)-u_{k}(y))K(x,y)}{\mathfrak{m}_{1}p}\,dx\,dy  \\ & - \int_{\mathbb{R}^{N}\times\mathbb{R}^{N}}\frac{\mathcal{A}(u_{k}(x)-u_{k}(y))(u_{k}(x)-u_{k}(y))K(x,y)}{p}\,dx\,dy\\&+ \mathcal{J}_{\lambda,1}(u_{k}) \mbox{ for all } k \in \mathbb{N}.
\end{split}
\end{equation}
Consequently using \eqref{31.0}, \eqref{n6},  $(a_{2})$, $(\mathcal{K})$, and  \eqref{const} in \eqref{5.103},  we conclude that $\displaystyle{\int_{\Omega}\mathfrak{b}(x)|\tilde{u}_{0}|^{\mathfrak{m}_{2}}\,dx>0}$. Thus from Lemma \ref{lem4.3}, the fibering map $\wp_{\tilde{u}_{0}}$ admits a unique critical point $t_1 > 0 $ in such a way that $\wp'_{\tilde{u}_{0}}(t_{1})=0 \mbox{ and } t_{1}\tilde{u}_{0} \in \mathcal{N}^{-}_{\lambda}.$

\noindent Now we suppose  by contradiction  that $u_{k}\nrightarrow \tilde{u}_{0}$ in $W^{s,p}_{0}(\Omega)$. Using $(a_{1})$, $(a_{2})$, $(a_{3}')$, $(\mathcal{K})$, and the Brezis–Lieb Lemma (see \cite{brascolieb}), we infer that
\begin{equation}\label{lieb}
\int_{\mathbb{R}^{N}\times\mathbb{R}^{N}}\mathscr{A}(\tilde{u}_{0}(x)-\tilde{u}_{0}(y))K(x,y)\,dx\,dy < \liminf_{k\to + \infty}\int_{\mathbb{R}^{N}\times\mathbb{R}^{N}}\mathscr{A}(u_{k}(x)-u_{k}(y))K(x,y)\,dx\,dy.
\end{equation}
Since $(u_k)_{k\in \mathbb{N}}\subset \mathcal{N}^{-}_{\lambda,1} $, we obtain $\wp_{u_k}(1)\geqslant\wp_{u_k}(t)$ for all $t>0$, consequently 
\begin{equation}\label{lieb2}
\mathcal{J}_{\lambda,1}(u_{k})\geqslant \mathcal{J}_{\lambda,1}(tu_{k}) \mbox{  for all  } t>0 \mbox{ and } k \in \mathbb{N}.
\end{equation}
Thence, from \eqref{31.0}, \eqref{lieb}, and \eqref{lieb2}, we conclude that
\begin{equation*}
\begin{split}
\mathcal{J}_{\lambda,1}(t_{1}\tilde{u}_{0}) <  & \liminf_{k\to +\infty}\Bigg(\int_{\mathbb{R}^{N}\times\mathbb{R}^{N}}\mathscr{A}(u_{k}(x)-u_{k}(y))K(x,y)\,dx\,dy \\& -\frac{\lambda }{\mathfrak{m}_1}\int_{\Omega}\mathfrak{a}(x)|u_{k}|^{\mathfrak{m}_1}\,dx- \frac{1}{\mathfrak{m}_{2}}\int_{\Omega}\mathfrak{b}(x)|u_{k}|^{\mathfrak{m}_{2}}\,dx\Bigg) \\ <&  \liminf_{k\to +\infty}\mathcal{J}_{\lambda, 1}( u_k)=  \mathbb{J}^{-}. 
\end{split}
\end{equation*}
Therefore, $t_{1}\tilde{u}_{0}\in \mathcal{N}^{-}_{\lambda}$ and 
$\mathcal{J}_{\lambda,1}(t_{1}\tilde{u}_{0})< \mathbb{J}^{-}$ which is  a contradiction due the fact that $(u_k)_{k\in \mathbb{N}}$ is minimizer sequence. Hence $u_k \to \tilde{u}_{0}$ in $W_{0}^{s,p}(\Omega)$ as $k \to  + \infty$. Thus from Lemma \ref{ll1} and by equation \eqref{31.0} we can conclude that $\mathcal{J}_{\lambda,1}(u_{k}) \to \mathcal{J}_{\lambda,1}(\tilde{u}_{0})$ as $k \to  + \infty$ and $\tilde{u}_{0}$ is point  minimum of $\mathcal{J}_{\lambda, 1}$ in $\mathcal{N}^{-}_{\lambda}$, then a critical point of the functional $\mathcal{J}_{\lambda, 1}$.
\end{proof}

 \section{Proof of Theorems \ref{peso1} and \ref{peso2} }
 \subsection{Proof of Theorem \ref{peso1} }\label{teorema3}
\hfill \break 
To prove  Theorem \ref{peso1}  we will apply Ekeland’s Variational Principle, \cite[Theorem 2.4]{wil}  combined with the Lemmas that we will prove in the sequence.

\noindent First we denote by $q'(x)$ the conjugate exponent of the function $q(x)$ and put $ \alpha(x) := \displaystyle{ \frac{q(x)\mathfrak{m}_{1}(x)}{q(x)-\mathfrak{m}_{1}(x)}}$ for all $x \in \overline{\Omega}$. Since $ \displaystyle{ \underline{\mathfrak{m}}_{1}^{-}\leqslant \underline{\mathfrak{m}}_{1}^{+}    <  p^{-}\leqslant p^{+} <\frac{N}{s}< \underline{q}^{-}\leqslant \underline{q}^{+}} $, 	we have $ q'(x)\mathfrak{m}_{1}(x)< \alpha(x)$ and $\alpha(x)< p_{s}^{\star}(x)$, for all $x$ in $ \overline{\Omega}$. Thus, by Lemma \ref{2.11}, the embeddings $\mathscr{W}\hookrightarrow L^{\mathfrak{m}_{1}(\cdot)q'(\cdot)}(\Omega) $ and $\mathscr{W}\hookrightarrow L^{\alpha(\cdot)}(\Omega) $ are compact and continuous.
 
  The proof of  Lemma \ref{step1} follows by standards arguments.
  
 \begin{lemma} \label{step1} Assume the hypotheses of  Theorem \ref{peso1} are fulfilled. Then the functional $ \mathcal{J}_{\lambda, 0} \in C^{1}( \mathscr{W}, \mathbb{R})$.
\end{lemma}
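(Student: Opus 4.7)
The plan is to decompose $\mathcal{J}_{\lambda,0}=\Phi-\lambda\,\mathcal{I}_{\mathfrak{a}}$ and exploit Lemma \ref{ll1}, which already gives $\Phi\in C^{1}(\mathscr{W},\mathbb{R})$ together with an explicit formula for $\Phi'$. The entire burden of the proof therefore reduces to showing that $\mathcal{I}_{\mathfrak{a}}:\mathscr{W}\to\mathbb{R}$ is of class $C^{1}$. The strategy is the classical three-step scheme: (i) verify that $\mathcal{I}_{\mathfrak{a}}$ is well-defined and bounded on bounded sets of $\mathscr{W}$, (ii) compute the G\^ateaux derivative and identify it as a bounded linear functional, (iii) prove continuity of $u\mapsto \mathcal{I}_{\mathfrak{a}}'(u)$ from $\mathscr{W}$ into $\mathscr{W}'$.

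For step (i), the key observation is that $\tfrac{1}{q(x)}+\tfrac{\mathfrak{m}_{1}(x)}{\alpha(x)}=1$, since $\alpha(x)=q(x)\mathfrak{m}_{1}(x)/(q(x)-\mathfrak{m}_{1}(x))$. Applying the generalized H\"older inequality (Proposition \ref{hold3}$(b)$) with exponents $q(\cdot)$ and $\alpha(\cdot)/\mathfrak{m}_{1}(\cdot)$, then Proposition \ref{binge} to bound $\bigl\||u|^{\mathfrak{m}_{1}(\cdot)}\bigr\|_{L^{\alpha(\cdot)/\mathfrak{m}_{1}(\cdot)}(\Omega)}$ by a power of $\|u\|_{L^{\alpha(\cdot)}(\Omega)}$, and finally invoking the continuous embedding $\mathscr{W}\hookrightarrow L^{\alpha(\cdot)}(\Omega)$ (since $\alpha(x)<p^{\star}_{s}(x)$, Lemma \ref{2.11}), yields finiteness and boundedness of $\mathcal{I}_{\mathfrak{a}}$ on bounded subsets of $\mathscr{W}$. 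For step (ii), fix $u,v\in\mathscr{W}$; the mean value theorem gives, for $t\in(0,1)$,
\[
\bigl|\mathfrak{a}(x)\bigr|\,\Bigl|\tfrac{|u+tv|^{\mathfrak{m}_{1}(x)}-|u|^{\mathfrak{m}_{1}(x)}}{t\,\mathfrak{m}_{1}(x)}\Bigr|\leqslant |\mathfrak{a}(x)|\,(|u|+|v|)^{\mathfrak{m}_{1}(x)-1}|v|,
\]
and the right-hand side is integrable by a triple H\"older with exponents $q(\cdot)$, $\alpha(\cdot)/(\mathfrak{m}_{1}(\cdot)-1)$ and $\alpha(\cdot)$ (these do sum to $1$ for the same algebraic reason as above). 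The Lebesgue dominated convergence theorem then produces
\[
\langle \mathcal{I}_{\mathfrak{a}}'(u),v\rangle=\int_{\Omega}\mathfrak{a}(x)\,|u|^{\mathfrak{m}_{1}(x)-2}u\,v\,dx,
\]
and the same H\"older estimate shows $\mathcal{I}_{\mathfrak{a}}'(u)\in\mathscr{W}'$ with norm controlled by a power of $\|u\|_{\mathscr{W}}$.

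Step (iii) is the main technical obstacle. Suppose $u_{k}\to u$ in $\mathscr{W}$; by the compact embedding $\mathscr{W}\hookrightarrow L^{\alpha(\cdot)}(\Omega)$ of Lemma \ref{2.11}, $u_{k}\to u$ in $L^{\alpha(\cdot)}(\Omega)$ (and along a subsequence a.e.\ in $\Omega$, with a modular majorant). The plan is to show that the Nemytskii operator $N(u):=|u|^{\mathfrak{m}_{1}(\cdot)-2}u$ is continuous from $L^{\alpha(\cdot)}(\Omega)$ into $L^{\alpha(\cdot)/(\mathfrak{m}_{1}(\cdot)-1)}(\Omega)$, using the standard Krasnoselskii-type argument adapted to variable exponents (pointwise convergence, the elementary inequality $\bigl||a|^{r-2}a-|b|^{r-2}b\bigr|\leqslant C_{r}(|a|+|b|)^{r-2}|a-b|$ for the relevant range of $r=\mathfrak{m}_{1}(x)$, and domination together with Proposition \ref{masmenos}$(d)$ to pass from modular to norm convergence). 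Combining this with H\"older as in step (ii) gives
\[
\|\mathcal{I}_{\mathfrak{a}}'(u_{k})-\mathcal{I}_{\mathfrak{a}}'(u)\|_{\mathscr{W}'}\leqslant C\,\|\mathfrak{a}\|_{L^{q(\cdot)}(\Omega)}\,\bigl\|N(u_{k})-N(u)\bigr\|_{L^{\alpha(\cdot)/(\mathfrak{m}_{1}(\cdot)-1)}(\Omega)}\longrightarrow 0,
\]
which together with the G\^ateaux differentiability upgrades $\mathcal{I}_{\mathfrak{a}}$ to a $C^{1}$ functional, and hence $\mathcal{J}_{\lambda,0}\in C^{1}(\mathscr{W},\mathbb{R})$. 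The delicate point is exclusively the Nemytskii continuity in the variable-exponent setting, where the exponent $\mathfrak{m}_{1}(x)-2$ may change sign over $\Omega$; one handles this by splitting $\Omega$ into $\{\mathfrak{m}_{1}\geqslant 2\}$ and $\{\mathfrak{m}_{1}<2\}$ and using the two standard elementary inequalities separately on each piece.
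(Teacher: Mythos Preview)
Your sketch is precisely the kind of ``standard argument'' the paper invokes (the paper gives no details beyond that phrase), and the overall three-step scheme is correct: reduce to Lemma~\ref{ll1} for $\Phi$ and prove $\mathcal{I}_{\mathfrak{a}}\in C^{1}$ via H\"older plus Nemytskii continuity.

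There is, however, an algebraic slip you should fix. The identity you claim, $\tfrac{1}{q(x)}+\tfrac{\mathfrak{m}_{1}(x)}{\alpha(x)}=1$, is false: with $\alpha(x)=q(x)\mathfrak{m}_{1}(x)/(q(x)-\mathfrak{m}_{1}(x))$ one has $\tfrac{\mathfrak{m}_{1}}{\alpha}=\tfrac{q-\mathfrak{m}_{1}}{q}$, hence $\tfrac{1}{q}+\tfrac{\mathfrak{m}_{1}}{\alpha}=\tfrac{1+q-\mathfrak{m}_{1}}{q}<1$ since $\mathfrak{m}_{1}>1$. Consequently your triple in step~(ii), namely $\big(q,\tfrac{\alpha}{\mathfrak{m}_{1}-1},\alpha\big)$, is not conjugate either. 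The clean choices, used by the paper in the surrounding arguments, are: for step~(i), the pair $(q,q')$ together with $u\in L^{\mathfrak{m}_{1}(\cdot)q'(\cdot)}(\Omega)$ (cf.\ \eqref{menes}); and for steps~(ii)--(iii), the genuine triple
\[
\frac{1}{q(x)}+\frac{1}{\mathfrak{m}_{1}'(x)}+\frac{1}{\alpha(x)}=1,\qquad \mathfrak{m}_{1}'(x)=\frac{\mathfrak{m}_{1}(x)}{\mathfrak{m}_{1}(x)-1},
\]
so that $|u|^{\mathfrak{m}_{1}(\cdot)-1}\in L^{\mathfrak{m}_{1}'(\cdot)}(\Omega)$ and $v\in L^{\alpha(\cdot)}(\Omega)$ (cf.\ \eqref{zana}). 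Your argument does not actually collapse because $\Omega$ is bounded and your exponents are sub-conjugate, so H\"older still yields the needed estimate with an extra $|\Omega|$-power; but the justification as written is incorrect, and the Nemytskii target space in step~(iii) should accordingly be $L^{\mathfrak{m}_{1}'(\cdot)}(\Omega)$ rather than $L^{\alpha(\cdot)/(\mathfrak{m}_{1}(\cdot)-1)}(\Omega)$. With these corrections your proof goes through.
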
  
 \begin{lemma}\label{step2} Assume the hypotheses of  Theorem \ref{peso1} are fulfilled. Then there exists $\lambda^{\star}>0$ such that, for all $\lambda \in (0, \lambda^{\star})$, there exist $\overline{\mathcal{R}}$, $R>0$ where $ \mathcal{J}_{\lambda, 0}(u)\geqslant \overline{\mathcal{R}}> 0$
for all $u \in \mathscr{W} $  with $ \|u\|_{\mathscr{W}}= R$.
  \end{lemma}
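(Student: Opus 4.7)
The plan is to split $\mathcal{J}_{\lambda,0}(u) = \Phi(u) - \lambda\,\mathcal{I}_{\mathfrak{a}}(u)$ on a small sphere, bound each piece below and above in terms of $R = \|u\|_{\mathscr{W}}$, and then exploit the strict inequality $\underline{\mathfrak{m}}_1^{+} < p^{-}$ coming from the hypothesis to extract a strictly positive gap for all small $\lambda$.

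For the principal part, $(a_3)$ yields $\mathscr{A}(t)\geqslant \tfrac{1}{p^{+}}\mathcal{A}(t)t$, which together with the lower bound $\mathcal{A}(t)t\geqslant c_{\mathcal{A}}|t|^{p(x,y)}$ from $(a_2)$ and $K(x,y)|x-y|^{N+sp(x,y)}\geqslant b_0$ from $(\mathcal{K})$ gives, after integration,
\[
\Phi(u) \;\geqslant\; \frac{c_{\mathcal{A}} b_0}{p^{+}}\,\rho_{\mathscr{W}}(u).
\]
Restricting to $\|u\|_{\mathscr{W}} = R \in (0,1]$, Proposition \ref{lw0}(c) yields $\rho_{\mathscr{W}}(u)\geqslant R^{p^{+}}$, hence $\Phi(u)\geqslant \tfrac{c_{\mathcal{A}}b_0}{p^{+}}R^{p^{+}}$.

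For the weighted term, I would use the generalised Hölder inequality of Proposition \ref{hold3}(b) with the exponent pair $q(\cdot),q'(\cdot)$,
\[
|\mathcal{I}_{\mathfrak{a}}(u)| \;\leqslant\; \frac{C_H}{\underline{\mathfrak{m}}_1^{-}}\,\|\mathfrak{a}\|_{L^{q(\cdot)}(\Omega)}\,\bigl\||u|^{\mathfrak{m}_1(\cdot)}\bigr\|_{L^{q'(\cdot)}(\Omega)},
\]
and then Proposition \ref{binge} with $h_1=\mathfrak{m}_1$, $h_2=q'$ (so $h_1h_2=\alpha$) to replace the last factor by $\|u\|_{L^{\alpha(\cdot)}(\Omega)}^{\underline{\mathfrak{m}}_1^{-}}$ whenever $\|u\|_{L^{\alpha(\cdot)}(\Omega)}\leqslant 1$. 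The continuous embedding $\mathscr{W}\hookrightarrow L^{\alpha(\cdot)}(\Omega)$ already noted in the paragraph preceding the lemma provides a constant $c>0$ such that $\|u\|_{L^{\alpha(\cdot)}(\Omega)}\leqslant c\|u\|_{\mathscr{W}}$, so restricting to $R\in(0,\min\{1,1/c\}]$ gives $|\mathcal{I}_{\mathfrak{a}}(u)| \leqslant C_{\mathfrak{a}}\,R^{\underline{\mathfrak{m}}_1^{-}}$ for the constant $C_{\mathfrak{a}}:=\tfrac{C_H}{\underline{\mathfrak{m}}_1^{-}}\|\mathfrak{a}\|_{L^{q(\cdot)}(\Omega)}\,c^{\underline{\mathfrak{m}}_1^{-}}$.

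Combining, for $\|u\|_{\mathscr{W}}=R\in(0,\min\{1,1/c\}]$,
\[
\mathcal{J}_{\lambda,0}(u) \;\geqslant\; R^{\underline{\mathfrak{m}}_1^{-}}\!\left(\frac{c_{\mathcal{A}}b_0}{p^{+}}\,R^{\,p^{+}-\underline{\mathfrak{m}}_1^{-}} - \lambda\,C_{\mathfrak{a}}\right).
\]
The exponent $p^{+}-\underline{\mathfrak{m}}_1^{-}$ is strictly positive by the standing assumption $\underline{\mathfrak{m}}_1^{+}<p^{-}$. Fixing such an $R$ and setting
\[
\lambda^{\star} := \frac{c_{\mathcal{A}}b_0\,R^{\,p^{+}-\underline{\mathfrak{m}}_1^{-}}}{2\,p^{+}\,C_{\mathfrak{a}}}, \qquad \overline{\mathcal{R}} := \frac{c_{\mathcal{A}}b_0}{2\,p^{+}}\,R^{p^{+}},
\]
any $\lambda\in(0,\lambda^{\star})$ forces the bracket to be at least half of its first summand, and hence $\mathcal{J}_{\lambda,0}(u)\geqslant\overline{\mathcal{R}}>0$ on the sphere $\|u\|_{\mathscr{W}}=R$.

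The only point requiring care is the bookkeeping: $R$ must be taken small enough that simultaneously $\|u\|_{\mathscr{W}}\leqslant 1$ (so that Proposition \ref{lw0}(c) supplies the $p^{+}$ exponent below) and $\|u\|_{L^{\alpha(\cdot)}(\Omega)}\leqslant 1$ (so that Proposition \ref{binge} supplies the $\underline{\mathfrak{m}}_1^{-}$ exponent above). Neither constraint is genuinely restrictive because both norms are small at small scales thanks to the embedding, but the two ``small modular'' regimes must be matched against each other; this matching, together with verifying $\alpha(x)<p^{\star}_s(x)$ to invoke the embedding, is the only nonroutine ingredient.
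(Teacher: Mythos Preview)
Your argument is essentially the paper's proof, but there is one bookkeeping slip: with $h_1=\mathfrak{m}_1$ and $h_2=q'$ in Proposition~\ref{binge}, the product is $h_1h_2=\mathfrak{m}_1(\cdot)q'(\cdot)=\mathfrak{m}_1(\cdot)\,q(\cdot)/(q(\cdot)-1)$, \emph{not} $\alpha(\cdot)=\mathfrak{m}_1(\cdot)q(\cdot)/(q(\cdot)-\mathfrak{m}_1(\cdot))$; these coincide only when $\mathfrak{m}_1\equiv 1$. The paper accordingly uses the embedding $\mathscr{W}\hookrightarrow L^{\mathfrak{m}_1(\cdot)q'(\cdot)}(\Omega)$ (with constant $c_4$), which is also recorded in the paragraph before the lemma, and obtains $\int_\Omega \mathfrak{a}|u|^{\mathfrak{m}_1(x)}\,dx\leqslant \|\mathfrak{a}\|_{L^{q(\cdot)}}\|u\|_{L^{\mathfrak{m}_1(\cdot)q'(\cdot)}}^{\underline{\mathfrak{m}}_1^{-}}\leqslant c_4^{\underline{\mathfrak{m}}_1^{-}}\|\mathfrak{a}\|_{L^{q(\cdot)}}R^{\underline{\mathfrak{m}}_1^{-}}$. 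Replacing $\alpha$ by $\mathfrak{m}_1 q'$ throughout your argument makes it correct and identical to the paper's, including the choice of $R<\min\{1,1/c_4\}$ and the explicit values of $\lambda^\star$ and $\overline{\mathcal{R}}$.
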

  \begin{proof}
Indeed,  since embedding $\mathscr{W}\hookrightarrow L^{\mathfrak{m}_{1}(\cdot)q'(\cdot)}(\Omega)$ is continuous  there exists a  constant $c_4>0$ such that
  \begin{equation}\label{pep2}
\|u\|_{L^{\mathfrak{m}_{1}(\cdot)q'(\cdot)}(\Omega)}\leqslant c_4\|u\|_{\mathscr{W}} \mbox{ for all } u \in\mathscr{W}. 
\end{equation}
  Fix $R \in (0, 1)$ such that $R <\frac{1}{c_4}$. From \eqref{pep2}, we obtain
   \begin{equation*}\label{pep3}
\|u\|_{L^{\mathfrak{m}_{1}(\cdot)q'(\cdot)}(\Omega)}\leqslant 1 \mbox{ for all } u \in \mathscr{W} \mbox{ with } R = \|u\|_{\mathscr{W}}. 
\end{equation*}
Thus, by Proposition \ref{hold3} and  Proposition \ref{binge}, we have 
\begin{equation}\label{menes}
\begin{split}
\int_{\Omega}\mathfrak{a}(x)|u|^{\mathfrak{m}_{1}(x)}dx \leqslant \|\mathfrak{a}\|_{L^{q(\cdot)}(\Omega)} \|u\|_{L^{\mathfrak{m}_{1}(\cdot)q'(\cdot)}(\Omega)}^{\underline{\mathfrak{m}}_{1}^{-}} \mbox{ for all } u \in \mathscr{W}. 
\end{split}
\end{equation}
 Since $\|u\|_{\mathscr{W}} =R <1 $, using $(a_{2})$, $(a_{3})$, $(\mathcal{K})$,  Proposition \ref{lw0}, and \eqref{menes}, we have that
\begin{equation}\label{pi2}
\begin{split}
 \mathcal{J}_{\lambda, 0}(u) \geqslant  R^{\underline{\mathfrak{m}}_{1}^{-}} \Bigg( \frac{c_{\mathcal{A}}b_0}{p^{+}} R^{p^{+}-\underline{\mathfrak{m}}_{1}^{-}}  - \frac{\lambda c_{4}^{\underline{\mathfrak{m}}_{1}^{-}}\|\mathfrak{a}\|_{L^{q(\cdot)}(\Omega)}}{\underline{\mathfrak{m}}_{1}^{-}}\Bigg).   
\end{split}
\end{equation}
Thus, by  \eqref{pi2}, we can choose $\lambda^{\star}$ in order to $\displaystyle{\frac{c_{\mathcal{A}}b_0}{p^{+}} R^{p^{+}-\underline{\mathfrak{m}}_{1}^{-}}  - \frac{\lambda c_{4}^{\underline{\mathfrak{m}}_{1}^{-}}\|\mathfrak{a}\|_{L^{q(\cdot)}(\Omega)}}{\underline{\mathfrak{m}}_{1}^{-}}}>0 .$ Therefore, for all $\lambda \in (0, \lambda^{\star})$ with
\begin{equation}\label{lamb}
 \lambda^{\star}= \frac{c_{\mathcal{A}}b_0 R^{p^{+}-\underline{\mathfrak{m}}_{1}^{-}}}{2p^{+}}  \cdot \frac{\underline{\mathfrak{m}}_{1}^{-}}{ c_{4}^{\underline{\mathfrak{m}}_{1}^{-}}\|\mathfrak{a}\|_{L^{q(\cdot)}(\Omega)}}
\end{equation}
 and for all $u \in  \mathscr{W} $  with $ \|u\|_{\mathscr{W}} = R $, there exists $\displaystyle{\overline{\mathcal{R}}=\frac{c_{\mathcal{A}}b_0 R^{p^{+}-\underline{\mathfrak{m}}_{1}^{-}}}{2p^{+}}>0}$ such that $ \mathcal{J}_{\lambda, 0}(u) \geqslant \overline{\mathcal{R}} >0.$
  \end{proof}
 
 \begin{lemma}\label{step3} Assume the hypotheses of  Theorem \ref{peso1} are fulfilled. Then there exists $ \underline{\upsilon} \in  \mathscr{W}$ such that,  $\underline{\upsilon} \neq 0$ and  $ \mathcal{J}_{\lambda, 0}(\gamma_{1}\underline{\upsilon})<0$ 
for all $ \gamma_{1} $ small enough.
\end{lemma}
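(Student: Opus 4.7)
\textbf{Proof proposal for Lemma \ref{step3}.} The plan is to choose a smooth test function supported in the interior of $\Omega_0$ (where $\mathfrak{a}$ is strictly positive), and then exploit the key growth gap $\underline{\mathfrak{m}}_1^+ < p^-$ to force the concave term $\lambda \mathcal{I}_\mathfrak{a}$ to dominate the operator term $\Phi$ as $\gamma_1 \to 0^+$.

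More precisely, since by hypothesis $\Omega_0 \subset \Omega$ has nonempty interior, I would pick $\underline{\upsilon} \in C_c^\infty(\Omega_0^\circ)\setminus\{0\}$ and extend it by zero to $\mathbb{R}^N$. Smoothness plus compact support inside $\Omega$ guarantees that $\underline{\upsilon} \in \mathscr{W}$ (it belongs to $L^{p(\cdot)}(\mathbb{R}^N)$ with finite Gagliardo seminorm). Because $\mathfrak{a}(x) > 0$ on $\overline{\Omega}_0$ and $\underline{\upsilon} \not\equiv 0$ in $\Omega_0^\circ$, we get
\begin{equation*}
C_{\underline{\upsilon}} := \int_{\Omega_0} \frac{\mathfrak{a}(x)|\underline{\upsilon}|^{\mathfrak{m}_1(x)}}{\mathfrak{m}_1(x)}\,dx > 0.
\end{equation*}

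Next, for $\gamma_1 \in (0,1)$, I would bound the two pieces of $\mathcal{J}_{\lambda,0}(\gamma_1 \underline{\upsilon})$ separately. For the operator term, using $(a_2)$ to estimate $\mathscr{A}(t) = \int_0^{|t|}\mathcal{A}(\tau)\,d\tau \leq \frac{C_\mathcal{A}}{p^-}|t|^{p(x,y)}$, together with $(\mathcal{K})$ and the elementary inequality $\gamma_1^{p(x,y)} \leq \gamma_1^{p^-}$, I obtain
\begin{equation*}
\Phi(\gamma_1 \underline{\upsilon}) \leq \frac{C_\mathcal{A} b_1}{p^-}\, \gamma_1^{p^-}\, \rho_\mathscr{W}(\underline{\upsilon}).
\end{equation*}
For the weight term, since $\underline{\upsilon}$ is supported where $\mathfrak{a} > 0$ and $\gamma_1^{\mathfrak{m}_1(x)} \geq \gamma_1^{\underline{\mathfrak{m}}_1^+}$ for $\gamma_1 \in (0,1)$,
\begin{equation*}
\lambda\,\mathcal{I}_\mathfrak{a}(\gamma_1 \underline{\upsilon}) \geq \lambda\, \gamma_1^{\underline{\mathfrak{m}}_1^+}\, C_{\underline{\upsilon}}.
\end{equation*}

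Combining these two estimates yields
\begin{equation*}
\mathcal{J}_{\lambda,0}(\gamma_1 \underline{\upsilon}) \leq \gamma_1^{\underline{\mathfrak{m}}_1^+}\left( \frac{C_\mathcal{A} b_1}{p^-}\,\rho_\mathscr{W}(\underline{\upsilon})\,\gamma_1^{p^- - \underline{\mathfrak{m}}_1^+} - \lambda C_{\underline{\upsilon}}\right).
\end{equation*}
The exponent $p^- - \underline{\mathfrak{m}}_1^+ > 0$ by hypothesis, so the bracketed expression tends to $-\lambda C_{\underline{\upsilon}} < 0$ as $\gamma_1 \to 0^+$. Hence there exists $\gamma_1^\star > 0$ such that $\mathcal{J}_{\lambda,0}(\gamma_1 \underline{\upsilon}) < 0$ for every $\gamma_1 \in (0,\gamma_1^\star)$, which is the claim. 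No genuinely hard step is anticipated; the only thing to be careful about is the handling of the variable exponents in the modular estimate, which is resolved by the uniform bounds $\gamma_1^{p(x,y)} \leq \gamma_1^{p^-}$ and $\gamma_1^{\mathfrak{m}_1(x)} \geq \gamma_1^{\underline{\mathfrak{m}}_1^+}$ for $\gamma_1 \in (0,1)$.
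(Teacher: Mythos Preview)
Your proof is correct and follows the same overall strategy as the paper: pick a smooth test function supported where $\mathfrak{a}>0$, bound $\Phi(\gamma_1\underline{\upsilon})$ above by a constant times $\gamma_1^{p^-}$ and $\lambda\mathcal{I}_\mathfrak{a}(\gamma_1\underline{\upsilon})$ below by a constant times $\gamma_1$ to a smaller power, then let $\gamma_1\to 0^+$.

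Your version is in fact cleaner than the paper's. The paper introduces an auxiliary open set $\Omega_1\subset\Omega_0$ on which $|\mathfrak{m}_1(x)-\underline{\mathfrak{m}}_0^-|\leq\varepsilon$ and works with the local exponent $\underline{\mathfrak{m}}_0^-+\varepsilon\leq p^-$, a localization trick that would be needed if one only knew $\inf_{\overline{\Omega}_0}\mathfrak{m}_1<p^-$. But Theorem~\ref{peso1} already assumes the \emph{global} bound $\underline{\mathfrak{m}}_1^+<p^-$, so you are entitled to use $\gamma_1^{\mathfrak{m}_1(x)}\geq\gamma_1^{\underline{\mathfrak{m}}_1^+}$ on all of $\Omega$ directly, and the $\Omega_1$ detour is unnecessary. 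Both arguments reach the same conclusion; yours just gets there with less machinery.
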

 \begin{proof}
Indeed, since $\mathfrak{m}_{1}(x)< p(x,y)$ for all $x, y \in \overline{\Omega}_{0}$. In the sequence we will use the following notation, $\underline{\mathfrak{m}}^{-}_{0}:=\displaystyle{\inf_{x \in \overline{\Omega}_{0}}\mathfrak{m}_{1}(x)}$ and $\underline{\mathfrak{m}}^{+}_{0}:=\displaystyle{\sup_{x \in \overline{\Omega}_{0}}\mathfrak{m}_{1}(x)}$. Thus, there exists $\varepsilon_{0} >0$ such that $\underline{\mathfrak{m}}^{-}_{0}+\varepsilon_{0} \leqslant p^{-}$. Since  $\mathfrak{m}_{1} \in C^{+}(\overline{\Omega})$ there exists an open  set $\Omega_{1}\subset \Omega_{0}$ such that
\begin{equation}\label{ester}
|\mathfrak{m}_{1}(x)-\underline{\mathfrak{m}}^{-}_{0}| \leqslant \varepsilon \mbox{ for all } x \in \Omega_{1}.
\end{equation}
Consequently, we can conclude that $\mathfrak{m}_{1}(x) \leqslant \underline{\mathfrak{m}}^{-}_{0}+ \varepsilon_{0} \leqslant p^{-}$ for all $x \in \Omega_{1}$.

\noindent Let $\underline{\upsilon} \in C^{\infty}_{0}(\Omega_{0})$  such that $\overline{\Omega}_{1} \subset supp\,\underline{\upsilon}$, $\underline{\upsilon}(x)=1$ for all $x \in \overline{\Omega}_{1}$ and for  $0 < \underline{\upsilon}<1$ in $\Omega_{0}$. Without loss of generality, we way assume $\|\underline{\upsilon}\|_{\mathscr{W}}=1$, by Proposition \ref{lw0}, it follows
\begin{equation}\label{ester2}
\int_{\mathbb{R}^{N}\times \mathbb{R}^{N}}\frac{|\underline{\upsilon}(x)-\underline{\upsilon}(y)|^{p(x,y)}}{|x-y|^{N+sp(x,y)}} \,dx\,dy =1.
\end{equation} 
 Then, by $(a_2)$, $(a_3)$, $(\mathcal{K})$, \eqref{ester}, \eqref{ester2},  Proposition \ref{lw0}, and $\gamma_{1} \in (0,1)$, we have
\begin{equation*}\label{pip2}
\begin{split}
 \mathcal{J}_{\lambda, 0}(\gamma_{1}\underline{\upsilon}) &    \leqslant \frac{ \gamma_{1}^{p^{-}}}{p^{-}} C_{\mathcal{A}}b_1- \frac{\lambda \gamma_{1}^{\underline{\mathfrak{m}}^{-}_{0}+\varepsilon}}{\underline{\mathfrak{m}}^{+}_{0}} \int_{\Omega_{1}}\mathfrak{a}(x)|\underline{\upsilon}|^{\mathfrak{m}_{1}(x)}\,dx.
\end{split}
\end{equation*}
Thus $ \mathcal{J}_{\lambda, 0}(\gamma_{1}\underline{\upsilon})<0$ for all $\gamma_{1}< \beta^{\frac{1}{p^{-}-\underline{\mathfrak{m}}^{-}_{0}-\varepsilon}} $ with

$$
0<\beta< \inf \Bigg\{ 1, \frac{\lambda p^{-}}{\underline{\mathfrak{m}}^{+}_{0}C_{\mathcal{A}}b_1} \displaystyle{\int_{\Omega_{1}}\mathfrak{a}(x)|\underline{\upsilon}|^{\mathfrak{m}_{1}(x)}\,dx}\Bigg\}. 
$$
Therefore, the proof this Lemma is completed.
\end{proof}

 \begin{proof}[Proof of Theorem \ref{peso1}]
   Consider $\lambda^{\star}>0$ be defined  as in \eqref{lamb} and let $\lambda \in (0, \lambda^{\star})$. By Lemma \ref{step1}, $ \mathcal{J}_{\lambda, 0} \in C^{1}( \mathscr{W}, \mathbb{R})$, and by Lemma \ref{step2} it follows that
 \begin{equation}\label{inf}
 \inf_{v\in\partial B_{R}(0)} \mathcal{J}_{\lambda, 0}(v)>0,
 \end{equation}
 where $\partial B_{R}(0)= \{ u \in B_{R}(0): \|u\|_{\mathscr{W}}= R \}$ and $B_{R}(0)$ is the ball centred at the origin in $\mathscr{W}$.
 
\noindent Besides, by Lemma \ref{step3},  there exists $\underline{\upsilon} \in \mathscr{W}$ such that $ \mathcal{J}_{\lambda, 0}(\gamma_{1}\underline{\upsilon})<0$ for all small enough $\gamma_{1} > 0$. Moreover, by \eqref{pi2}, for all $u \in B_{R}(0)$, we have
 \begin{equation}\label{pii2}
\begin{split}
 \mathcal{J}_{\lambda, 0}(u) \geqslant \frac{c_{\mathcal{A}}b_0}{p^{+}}  \|u\|^{p^{+}}_{\mathscr{W}}  - \frac{\lambda c_{1}^{\underline{\mathfrak{m}}_{1}^{-}}}{\underline{\mathfrak{m}}_{1}^{-}} \|u\|^{\underline{\mathfrak{m}}_{1}^{-}}_{\mathscr{W}}.  
\end{split}
\end{equation}
 Hence,
\begin{equation}\label{pii3}
-\infty < \overline{d} =  \inf_{v \in \overline{ B_{R}(0)}}{ \mathcal{J}_{\lambda, 0}}(v)< 0.
\end{equation}
Consequently, by \eqref{inf} and \eqref{pii3}  let $\varepsilon >0$ such that
\begin{equation*}
0 < \varepsilon <  \inf_{v\in\partial B_{R}(0)} \mathcal{J}_{\lambda, 0}(v) -  \inf_{v\in \overline{ B_{R}(0)}} \mathcal{J}_{\lambda, 0}(v). 
\end{equation*}
 Applying the Ekeland's Variational Principle, \cite[Theorem 2.4]{wil}, to the functional $ \mathcal{J}_{\lambda, 0}: \overline{B_{R}(0)}\to \mathbb{R}$, there exists $u_{\varepsilon} \in  \overline{B_{R}(0)}$ such that
 \begin{equation}
\label{pekl1}\left\{\begin{array}{rc} 
\begin{split}
 \mathcal{J}_{\lambda, 0}(u_{\varepsilon})& < \inf_{v\in\overline{ B_{R}(0)}}{ \mathcal{J}_{\lambda, 0}(v)} + \varepsilon,\\
 \mathcal{J}_{\lambda, 0}(u_{\varepsilon})& < \mathcal{J}_{\lambda,0}(u)+ \varepsilon\|u-u_{\varepsilon}\|_{\mathscr{W}} \mbox{ for all } u\neq u_{\varepsilon}.
\end{split}
\end{array}\right.
\end{equation}
 Since,
\begin{equation*}
 \mathcal{J}_{\lambda, 0}(u_{\varepsilon})\leqslant \inf_{v\in \overline{ B_{R}(0)}}{ \mathcal{J}_{\lambda, 0}(v)} + \varepsilon < \inf_{ v\in \partial B_{R}(0)}{ \mathcal{J}_{\lambda, 0}}(v)
\end{equation*}
 we deduce that $u_{\varepsilon} \in B_{R}(0)$.

\noindent Now, we  define $\mathbb{T}_{\lambda,\mathfrak{a}}^{\varepsilon}: \overline{ B_{R}(0)} \to \mathbb{R} $ by $\mathbb{T}_{\lambda,\mathfrak{a}}^{\varepsilon}(u)= \mathcal{J}_{\lambda,0}(u)+\varepsilon\|u -u_{\varepsilon}\|_{\mathscr{W}}$. Note that by \eqref{pekl1}, we get  $\mathbb{T}_{\lambda,\mathfrak{a}}^{\varepsilon}(u_{\varepsilon})= \mathcal{J}_{\lambda,0}(u_{\varepsilon})<\mathbb{T}_{\lambda,\mathfrak{a}}^{\varepsilon}(u)$ for all $u\neq u_{\varepsilon}$. Thus $u_{\varepsilon}$ is a minimum point of $ \mathbb{T}_{\lambda,\mathfrak{a}}^{\varepsilon} $ on $\overline{B_{R}(0)}$. Therefore,
 \begin{equation*}
\frac{\mathbb{T}_{\lambda,\mathfrak{a}}^{\varepsilon}(u_{\varepsilon}+tv)- \mathbb{T}_{\lambda,\mathfrak{a}}^{\varepsilon}(u_{\varepsilon})}{t}\geqslant 0 \mbox{ for all  small enough } t>0 \mbox{ and } v \in B_{R}(0).
\end{equation*}

\noindent By this fact, we obtain
\begin{equation*}
\frac{ \mathcal{J}_{\lambda, 0}(u_{\varepsilon}+tv)- \mathcal{J}_{\lambda,0}(u_{\varepsilon})}{t} + \varepsilon\|v\|_{\mathscr{W}}\geqslant 0.
\end{equation*}
 Taking $t \to 0^{+}$, it follows that $
\langle \mathcal{J}'_{\lambda,0}(u_{\varepsilon}), v\rangle  + \varepsilon \|v \|_{\mathscr{W}} \geqslant 0$ and we infer that
 \begin{equation} \label{ekps}
\| \mathcal{J}'_{\lambda, 0}(u_{\varepsilon})\|_{\mathscr{W}'} \leqslant \varepsilon.
\end{equation}
Then, by \eqref{pii3} and \eqref{ekps} we deduce that there exists a sequence $(w_k)_{k\in \mathbb{N}}\subset B_{R}(0)$ such that
 \begin{equation}\label{eks1}
 \mathcal{J}_{\lambda, 0}(w_k)\to \overline{d} \hspace{0.2cm}\mbox{ and } \hspace{0.2cm}
 \mathcal{J}'_{\lambda, 0}(w_k)\to 0 \hspace{0.2cm}\mbox{ as } k\to +\infty.
\end{equation}
 From \eqref{pii2} and \eqref{eks1},  we have that sequence $(w_k)_{k\in\mathbb{N}}$ is bounded in $\mathscr{W}$. Indeed, if $ \|w_k\|_{\mathscr{W}}\to+\infty$, by  \eqref{pii2} and since   $\underline{\mathfrak{m}}_{1}^{-} <  p^{+}$  we get   $ \mathcal{J}_{\lambda, 0}(w_k)\to + \infty$, which is a contradiction with \eqref{eks1}. Therefore the  sequence $(w_k)_{k\in\mathbb{N}}$ is bounded in $\mathscr{W}$. From Lemma \ref{2.11}, there exists $w \in \mathscr{W}$ such that $w_k \rightharpoonup w$ in $\mathscr{W},$ $w_k \to w$  in $L^{\mathfrak{m}_{1}(\cdot)}(\Omega)$, and $w_{k}(x) \to w(x)$ a.e.  $x\in  \Omega$ as $k \to +\infty$.
 
 \noindent To finalize the proof we will show that $w_k \to w $ in $\mathscr{W}$ as $k \to  +\infty$.\\
 \textbf{Claim c1. }
\begin{equation*}
 \lim_{k\to +\infty}\int_{\Omega}\mathfrak{a}(x)|w_k|^{\mathfrak{m}_{1}(x)-2}w_k(w_k-w)\, dx=0.
 \end{equation*}
Indeed, from Proposition \ref{hold3} and Proposition \ref{binge}, we have
\begin{equation}\label{zana}
\begin{split}
\int_{\Omega}\mathfrak{a}(x)|w_k|^{\mathfrak{m}_{1}(x)-2}w_k(w_k-w)\, dx \leqslant  \|\mathfrak{a}\|_{L^{q(\cdot)}(\Omega)} (1+ \|w_k\|_{L^{\mathfrak{m}_{1}(\cdot)}(\Omega)}^{\underline{\mathfrak{m}}_{1}^{+}-1})\|w_k-w\|_{L^{\alpha(\cdot)}(\Omega)}.
\end{split}
\end{equation}
Since $\mathscr{W}$ is continuously embedded in $L^{\mathfrak{m}_{1}( \cdot)}(\Omega)$ and $(w_k)_{k \in \mathbb{N}}$ is bounded in $\mathscr{W}$, so  $(w_k)_{k\in \mathbb{N}}$ is bounded in $L^{\mathfrak{m}_{1}(\cdot)}(\Omega)$. From Lemma \ref{2.11}, the embedding  $\mathscr{W}\hookrightarrow L^{\alpha(\cdot)}(\Omega)$ is compact, we deduce $\|w_k -w \|_{L^{\alpha(\cdot)}(\Omega)}\to 0$ as $k\to +\infty$. Therefore, using \eqref{zana} the proof of \textbf{Claim c1} is complete.

\noindent On the other hand, by \eqref{eks1}, we infer that

 \begin{equation}\label{ll2b}
 \lim_{k\to +\infty}\langle \mathcal{J}'_{\lambda,0}(w_k), w_k-w\rangle=0.
 \end{equation}
Consequently by \textbf{ Claim c1} and \eqref{ll2b}, we get
 \begin{equation*}
 \lim_{k\to +\infty}\langle \Phi'(w_k), w_k-w\rangle=\lim_{k\to + \infty}\langle \mathcal{J}'_{\lambda,0}(w_k), w_k-w\rangle=0.
 \end{equation*}
 Thus, by Lemma \ref{ll1} it follows that $w_k \to w $  in $\mathscr{W}$ as $k \to +\infty$. Since  $ \mathcal{J}_{\lambda, 0} \in C^{1}(\mathscr{W}, \mathbb{R})$, using \eqref{eks1}, we obtain
\begin{equation*}
 \mathcal{J}_{\lambda, 0}(w)\leqslant \lim_{k\to +\infty} \mathcal{J}_{\lambda,0}(w_k)= \overline{d}<0  \hspace{0.2cm}\mbox{ and } \hspace{0.2cm}
 \mathcal{J}'_{\lambda, 0}(w)=0.
\end{equation*}
 Therefore, $w$ is a nontrivial weak solution to  problem \eqref{p1} and thus any $\lambda \in (0, \lambda^{\star})$ is an eigenvalue of problem \eqref{p1}.
 \end{proof}
  \subsection{Proof of Theorem \ref{peso2}} \label{teorema4}
  \hfill \break 
  Before we prove  Theorem \ref{peso2} we define 
  $$\mathscr{W}^{+}= \Bigg\{ u \in \mathscr{W}: \int_{\Omega}\mathfrak{a}(x)|u|^{\mathfrak{m}_{1}(x)}dx>0\Bigg\}, \mathscr{W}^{-}= \Bigg\{ u \in \mathscr{W}: \int_{\Omega}\mathfrak{a}(x)|u|^{\mathfrak{m}_{1}(x)}dx<0\Bigg\},$$
\begin{equation}\label{definin}
  \begin{split}
  \lambda^{\star \star}= \inf_{u \in \mathscr{W}^{+}} \frac{\Phi(u)}{\mathcal{I}_{\mathfrak{a}}(u)}, \hspace{0.3cm} \lambda_{\star}=\inf_{u \in \mathscr{W}^{+}} \frac{\displaystyle{\int_{\mathbb{R}^{N}\times \mathbb{R}^{N}}\mathcal{A}(u(x)-u(y))(u(x)-u(y))K(x,y)\,dx\,dy}}{\displaystyle{\int_{\Omega}\mathfrak{a}(x)|u|^{\mathfrak{m}_{1}(x)}\,dx}}, \\ \mu^{\star \star}= \sup_{u \in \mathscr{W}^{-}} \frac{\Phi(u)}{\mathcal{I}_{\mathfrak{a}}(u)}, \hspace{0.3cm} \mu_{\star}=\inf_{u \in \mathscr{W}^{-}} \frac{\displaystyle{\int_{\mathbb{R}^{N}\times \mathbb{R}^{N}}\mathcal{A}(u(x)-u(y))(u(x)-u(y))K(x,y)\,dx\,dy}}{\displaystyle{\int_{\Omega}\mathfrak{a}(x)|u|^{\mathfrak{m}_{1}(x)}\,dx}}.
  \end{split} 
  \end{equation} 
 
 \begin{proof}[Proof of Theorem \ref{peso2}]
Note that if $\lambda$  is an eigenvalue of problem \eqref{p1} with weight function  $\mathfrak{a}$, then $- \lambda$ is an eigenvalue of problem \eqref{p1} with weight $-\mathfrak{a}$ . Hence, it is enough to show Theorem \ref{peso2} only for $\lambda >0$. So  problem  \eqref{p1} has
only to be considered in $ \mathscr{W}^{+}$. For this case, the proof is divided into the following four steps.\\
\textbf{\textit{Step 1.}} $\lambda_{\star}>0$.
  
 \noindent First we observe that since  $\mathscr{A}$ is strictly convex (see $(a_{1})$) and  by \cite[Lemma 15.4]{kavian}, we get
  \begin{equation}\label{kav1}
\mathscr{A}(t) \leqslant \mathcal{A}(t)t \mbox{ for all } t \in \mathbb{R}.
\end{equation}
  Then, by \eqref{kav1}  it follows that 
\begin{equation}\label{rr1}
\begin{split}
\frac{\Phi(u)}{\mathcal{I}_{\mathfrak{a}}(u)} &\leqslant  \underline{\mathfrak{m}}_{1}^{+}\frac{\displaystyle{\int_{\mathbb{R}^{N}\times \mathbb{R}^{N}}\mathcal{A}(u(x)-u(y))(u(x)-u(y))K(x,y)\,dx\,dy}}{\displaystyle{\int_{\Omega}\mathfrak{a}(x)|u|^{\mathfrak{m}_{1}(x)}\,dx}} \mbox{ for all } u \in \mathscr{W}^{+}.
\end{split}
\end{equation}
 On the other hand, from $(a_{3})$
\begin{equation}\label{rr2}
\begin{split}
\frac{\Phi(u)}{\mathcal{I}_{\mathfrak{a}}(u)} & \geqslant  \frac{\underline{\mathfrak{m}}_{1}^{-}}{p^{+}}\frac{\displaystyle{\int_{\mathbb{R}^{N}\times \mathbb{R}^{N}}\mathcal{A}(u(x)-u(y))(u(x)-u(y))K(x,y)}\,dx\,dy}{\displaystyle{\int_{\Omega}\mathfrak{a}(x)|u|^{\mathfrak{m}_{1}(x)}\,dx}}  \mbox{ for all } u \in \mathscr{W}^{+}.
\end{split}
\end{equation}
  Then by \eqref{definin}, \eqref{rr1}, and \eqref{rr2}, we get
  \begin{equation}\label{corri}
  \frac{\underline{\mathfrak{m}}_{1}^{-}}{p^{+}} \lambda_{\star}\leqslant \lambda^{\star \star} \leqslant \underline{\mathfrak{m}}_{1}^{+}\lambda_{\star}.
  \end{equation}
 
\noindent  Since $p^{+}< \underline{\mathfrak{m}}_{1}^{-}$, it follows that $\lambda^{\star \star} \geqslant \lambda_{\star} \geqslant 0.$
  
\noindent \textbf{Claim:}
   \begin{equation*}\label{claim1}
(a)\lim_{ \| u\|_{\mathscr{W}}\to 0, u\in \mathscr{W}^{+} }\,\frac{\Phi(u)}{\mathcal{I}_{\mathfrak{a}}(u)}= + \infty;\,\,\,\,\,\,\, (b) \lim_{ \| u\|_{\mathscr{W}}\to + \infty, u\in \mathscr{W}^{+} }\,\frac{\Phi(u)}{\mathcal{I}_{\mathfrak{a}}(u)}= + \infty.
\end{equation*}
  Indeed, using Proposition \ref{hold3} and Proposition \ref{binge}, it follows that
  \begin{equation}\label{528}
  \begin{split}
  |\mathcal{I}_{\mathfrak{a}}(u)|   \leqslant & \frac{2}{\underline{\mathfrak{m}}_{1}^{-}}\|\mathfrak{a}\|_{L^{q(\cdot)}(\Omega)}\|u\|_{L^{\mathfrak{m}_{1}(\cdot)q'(\cdot)}(\Omega)} ^{\underline{\mathfrak{m}}_{1}^{l}}, 
  \end{split}
  \end{equation} 
   where $l=-$, if $\|u \|_{L^{\mathfrak{m}_{1}(\cdot)q'(\cdot)}(\Omega)}<1$ and $l=+$, if $\|u \|_{L^{\mathfrak{m}_{1}(\cdot)q'(\cdot)}(\Omega)}\geqslant 1$.
  
 \noindent  Since   $q(x)> \sup\, \Bigg\{ 1, \frac{Np(x)}{Np(x)+sp(x)\mathfrak{m}_{1}(x)-N\mathfrak{m}_{1}(x)} \Bigg\}$ for all $x \in \overline{\Omega}$, we have $1<q'(x)\mathfrak{m}_{1}(x)< p^{\star}_{s}(x)$ for all $x \in \overline{\Omega}$, then by Lemma \ref{2.11} there exist a constant $c_{5}>0$ such that 
   \begin{equation}\label{merguu}
  \|u\|_{L^{\mathfrak{m}_{1}(\cdot)q'(\cdot)}(\Omega)}\leqslant c_{5}\|u\|_{\mathscr{W}}.
  \end{equation}
  Thus by \eqref{528} and \eqref{merguu}, we get
   \begin{equation}\label{a15}
 |\mathcal{I}_{\mathfrak{a}}(u)|\leqslant  \frac{2c_{5}^{\underline{\mathfrak{m}}_{1}^l}}{\underline{\mathfrak{m}}_{1}^{-}}\|\mathfrak{a}\|_{L^{q(\cdot)}(\Omega)}\|u\|_{\mathscr{W}}^{\underline{\mathfrak{m}}_{1}^{l}}.
 \end{equation}
   On the other hand, by $(a_{2})$, $(a_{3})$, and $(\mathcal{K})$, we infer that
  \begin{equation}\label{a16}
  \begin{split}
  \Phi(u)\geqslant & \frac{c_{\mathcal{A}}b_0}{p^{+}}  \int_{\mathbb{R}^{N}\times \mathbb{R}^{N}}\frac{|u(x)-u(y)|^{p(x,y)}}{|x-y|^{N+sp(x,y)}}\,dx\,dy.
  \end{split}
  \end{equation}
   Then for $u \in \mathscr{W}^{+}$ with  $\|u\|_{\mathscr{W}}\leqslant 1$  by \eqref{merguu}, \eqref{a15}, \eqref{a16}, and Proposition \ref{lw0}, we have
  \begin{equation}\label{a17}
 \begin{split}
 \frac{\Phi(u)}{\mathcal{I}_{\mathfrak{a}}(u)} \geqslant & \frac{c_{\mathcal{A}}b_0\underline{\mathfrak{m}}_{1}^{-}}{2c_{5}^{\underline{\mathfrak{m}}_{1}^{-}}p^{+}} \frac{\|u\|_{\mathscr{W}}^{p^{+}-\underline{\mathfrak{m}}_{1}^{-}}} { \|\mathfrak{a}\|_{L^{q(\cdot)}(\Omega)}}. 
 \end{split}
 \end{equation}
 Since $\underline{\mathfrak{m}}_{1}^{+}>p^{+}$, using  \eqref{a17} we conclude that
  \begin{equation*}
  \frac{\Phi(u)}{\mathcal{I}_{\mathfrak{a}}(u)} \to + \infty \hspace{0.2cm}\mbox{ as }   \hspace{0.2cm} \|u\|_{\mathscr{W}}\to 0, \hspace{0.2cm} u \in \mathscr{W}^{+}.
  \end{equation*}
 Therefore, the  relation $(a)$ holds.

 \noindent Now, since $\underline{\mathfrak{m}}_{1}^{+}- \frac{1}{2}< \underline{\mathfrak{m}}_{1}^{-}$ it follows that there exists $\eta >0$  such that  $\underline{\mathfrak{m}}_{1}^{+}- \frac{1}{2}<\eta< \underline{\mathfrak{m}}_{1}^{-}$, which implies that
\begin{equation}\label{a21}
\underline{\mathfrak{m}}_{1}^{+}- 1 < \underline{\mathfrak{m}}_{1}^{+}- \frac{1}{2}<  \eta \Rightarrow 1+ \eta - \underline{\mathfrak{m}}_{1}^{+}>0 \mbox{ and }
2(\underline{\mathfrak{m}}_{1}^{-}-\eta) \leqslant  2(\underline{\mathfrak{m}}_{1}^{+}-\eta)<1.
\end{equation} 
 
 \noindent   Taking $r(x)$ be any measurable function satisfying
  \begin{equation}\label{a22}
\begin{split}
\sup\Bigg\{ \frac{q(x)}{1+ \eta q(x)}, \frac{p^{\star}_{s}(x)}{p^{\star}_{s}(x)+ \eta -\mathfrak{m}_{1}(x)}\Bigg\} \leqslant r(x) \leqslant & \inf \Bigg\{  \frac{p^{\star}_{s}(x)}{p^{\star}_{s}(x)+ \eta q(x)}, \frac{1}{1+ \eta- \mathfrak{m}_{1}(x)}\Bigg\} \\ \mbox{ for all } x \in \overline{\Omega}, \mbox{ and } \\
\eta\Bigg(\frac{\underline{r}^{+}}{\underline{r}^{-}}+1\Bigg)&<\underline{\mathfrak{m}}_{1}^{-}.
\end{split}
\end{equation} 
 Thus by \eqref{a21} and \eqref{a22} $r \in L^{\infty}(\Omega)$ and $1<r(x)<q(x)$ for all $x \in \overline{\Omega}$.
 Then, for $u \in \mathscr{W}^{+} $ using  Proposition  \ref{hold3}, we obtain
 
 \begin{equation}\label{a233}
  \begin{split}
  |\mathcal{I}_{\mathfrak{a}}(u)|\leqslant & \frac{1}{\underline{\mathfrak{m}}_{1}^{-}}\Bigg(\frac{1}{\underline{r}^{-}} + \frac{1}{\underline{r}'^{-}} \Bigg)\|\mathfrak{a}|u|^{\eta}\|_{L^{r(\cdot)}(\Omega)}\||u|^{\mathfrak{m}_{1}(x)-\eta}\|_{L^{r'(\cdot)}(\Omega)} .
  \end{split}
  \end{equation}
 Now, note that by Proposition \ref{masmenos}
  \begin{equation*}\label{a25}
\|\mathfrak{a}|u|^{\eta}\|_{L^{r(\cdot)}(\Omega)} \leqslant \left\{\begin{array}{rc} 
\begin{split}
&1,  \mbox{ if } \|\mathfrak{a}|u|^{\eta}\|_{L^{r(\cdot)}(\Omega)}\leqslant 1; \\
& \Bigg[ \int_{\Omega} |\mathfrak{a}(x)|^{r(x)}|u|^{\eta r(x)}\,dx \Bigg]^{\frac{1}{\underline{r}^{-}}} \leqslant 2  \||\mathfrak{a}|^{r(x)}\|_{L^{\frac{q(\cdot)}{r(\cdot)}}(\Omega)}^{\frac{1}{\underline{r}^{-}}} \||u|^{\eta r(x)}\|_{L^{\big(\frac{q(\cdot)}{r(\cdot)}\big)'}(\Omega)}^{\frac{1}{\underline{r}^{-}}}, \\ &\mbox{ if }  \|\mathfrak{a}|u|^{\eta}\|_{L^{r(\cdot)}(\Omega)}> 1.
\end{split}
\end{array}\right.
\end{equation*}
 From Proposition \ref{binge}, we have
  \begin{equation}\label{a26}
  \begin{split}
\|\mathfrak{a}|u|^{\eta}\|_{L^{r(\cdot)}(\Omega)}  \leqslant & 1 +  2 \Bigg( 1+  \|\mathfrak{a}\|_{L^{q(\cdot)}(\Omega)}^{\frac{\underline{r}^{+}}{\underline{r}^{-}}}\Bigg)\Bigg(1+ \|u\|^{\eta \frac{\underline{r}^{+}}{\underline{r}^{-}}}_{L^{\eta r(\cdot)\big(\frac{q(\cdot)}{r(\cdot)}\big)'}(\Omega)}\Bigg).
\end{split}
\end{equation}
Similarly  using the same arguments as above,  we obtain
  \begin{equation*}\label{a27}
\||u|^{\mathfrak{m}_{1}(x)-\eta}\|_{L^{r'(\cdot)}(\Omega)} \leqslant \left\{\begin{array}{rc} 
\begin{split}
&1,  \mbox{ if } \|u\|_{L^{(\mathfrak{m}_{1}(\cdot) -\eta)r'(\cdot)}(\Omega)}\leqslant 1; \\
&  \|u\|_{L^{(\mathfrak{m}_{1}(\cdot) -\eta)r'(\cdot)}(\Omega)}^{\underline{\mathfrak{m}}_{1}^{+}-\eta}, \mbox{ if } \|u\|_{L^{(\mathfrak{m}_{1}(\cdot) -\eta)r'(\cdot)}(\Omega)} > 1.
\end{split}
\end{array}\right.
\end{equation*}
Thus, 
\begin{equation}\label{a28}
\||u|^{\mathfrak{m}_{1}(x)-\eta}\|_{L^{r'(\cdot)}(\Omega)} \leqslant 1 + \|u\|_{L^{(\mathfrak{m}_{1}(\cdot) -\eta)r'(\cdot)}(\Omega)}^{\underline{\mathfrak{m}}_{1}^{+}-\eta} .
\end{equation}
Therefore, using \eqref{a233}, \eqref{a26}, and \eqref{a28}, we have that
\begin{equation}\label{a29}
\begin{split}
  |\mathcal{I}_{\mathfrak{a}}(u)|  \leqslant & \frac{1}{\underline{\mathfrak{m}}_{1}^{-}}\Bigg(\frac{1}{\underline{r}^{-}} + \frac{1}{\underline{r}'^{-}} \Bigg) \Bigg[ 1 + \|u\|_{L^{(\mathfrak{m}_{1}(\cdot) -\eta)r'(\cdot)}(\Omega)}^{\underline{\mathfrak{m}}_{1}^{+}-\eta} + 2(1+  \|\mathfrak{a}\|_{L^{q(\cdot)}(\Omega)}^{\frac{\underline{r}^{+}}{\underline{r}^{-}}})\|u\|^{\underline{\mathfrak{m}}_{1}^{+}-\eta}_{L^{(\mathfrak{m}_{1}(\cdot)- \eta )r'(\cdot)}(\Omega)} \\& +2 \Bigg(1+ \|\mathfrak{a}\|_{L^{q(\cdot)}(\Omega)}^{\frac{\underline{r}^{+}}{\underline{r}^{-}}} \Bigg)+ 2\Bigg(1+  \|\mathfrak{a}\|_{L^{q(\cdot)}(\Omega)}^{\frac{\underline{r}^{+}}{\underline{r}^{-}}}\Bigg)\|u\|^{\eta \frac{\underline{r}^{+}}{\underline{r}^{-}}}_{L^{\eta r(\cdot)\big(\frac{q(\cdot)}{r(\cdot)}\big)'}(\Omega)} \\ &  + \Bigg(1+  \|\mathfrak{a}\|_{L^{q(\cdot)}(\Omega)}^{\frac{\underline{r}^{+}}{\underline{r}^{-}}}\Bigg)\Bigg(\|u\|^{2(\underline{\mathfrak{m}}_{1}^{+}-\eta) }_{L^{(\mathfrak{m}_{1}(\cdot)- \eta )r'(\cdot)}(\Omega)} +\|u\|^{2\eta \frac{\underline{r}^{+}}{\underline{r}^{-}}}_{L^{\eta r(\cdot)\big(\frac{q(\cdot)}{r(\cdot)}\big)'}(\Omega)}\Bigg)\Bigg] .
  \end{split}
\end{equation}
Now, since $r(x)$ is chosen such that \eqref{a22} is fulfilled, then

\begin{equation*}
1\leqslant  \eta r(x)\Bigg(\frac{q(x)}{r(x)}\Bigg)', \mbox{ and } (\mathfrak{m}_{1}(x)-\eta)r'(x)\leqslant p^{\star}_{s}(x) \hspace{0.1cm}\mbox{ a.e.}
\hspace{0.1cm} x \in\overline{\Omega}.
\end{equation*}
Thus, we obtain continuous embedding  $\mathscr{W}\hookrightarrow L^{\eta r(\cdot)\big(\frac{q(\cdot)}{r(\cdot)}\big)'}(\Omega)$ and $\mathscr{W}\hookrightarrow L^{(\mathfrak{m}_{1}(\cdot)- \eta )r'(\cdot)}(\Omega)$. Consequently, there  exist positive constants $c_6$,  $c_7$, $M_1$, and $M_2$ such that by \eqref{a29}, follows that 
\begin{equation}\label{a31}
\begin{split}
  |\mathcal{I}_{\mathfrak{a}}(u)|  \leqslant M_1 + M_2\Bigg( \|u\|^{2(\underline{\mathfrak{m}}_{1}^{+}-\eta) }_{\mathscr{W}} +\|u\|^{2\eta \frac{\underline{r}^{+}}{\underline{r}^{-}}}_{ \mathscr{W}    }\Bigg).
\end{split}
\end{equation}
Since $p^{-}>1> 2(\underline{\mathfrak{m}}_{1}^{+}-\eta) \geqslant 2 (\underline{\mathfrak{m}}_{1}^{-}-\eta) \geqslant 2\eta\frac{\underline{r}^{+}}{\underline{r}^{-}}> 2\eta$ by \eqref{a21} and \eqref{a22}, using  \eqref{a16}, \eqref{a31}, and Proposition \ref{lw0} for all $u \in \mathscr{W}^{+}$ with $\|u\|_{\mathscr{W}}>1$, we have that 
\begin{equation*}\label{a32}
 \begin{split}
 \frac{\Phi(u)}{\mathcal{I}_{\mathfrak{a}}(u)} \geqslant & \frac{1}{ p^{+}} \frac{\|u\|_{\mathscr{W}}^{p^{-}}} { M_1 + M_2\Bigg( \|u\|^{2(\underline{\mathfrak{m}}_{1}^{+}-\eta) }_{\mathscr{W}} +\|u\|^{2\eta \frac{\underline{r}^{+}}{\underline{r}^{-}}}_{ \mathscr{W}   }\Bigg)} \to + \infty \mbox{  as  } \|u\|_{\mathscr{W}}\to + \infty,
 \end{split}
 \end{equation*}
Therefore, the relation $(b)$ is holds.

\noindent  Now, we  proving that $\lambda_{\star}>0$. Let us suppose by contradiction that  $\lambda_{\star}=0$, then by \eqref{corri} it follows that  $\lambda^{\star \star}=0$. Let $(u_{k})_{k\in \mathbb{N}}\subset \mathscr{W}^{+}\setminus\{0\}$ be such that 
\begin{equation}\label{zeroo}
\lim_{k\to +\infty} \frac{\Phi(u_k)}{\mathcal{I}_{\mathfrak{a}}(u_k)}=0.
\end{equation}
Note that by  \eqref{a17}, we have 
\begin{equation}\label{a331}
\frac{\Phi(u_k)}{\mathcal{I}_{\mathfrak{a}}(u_k)} \geqslant \left\{\begin{array}{rc} 
\begin{split}
\frac{c_{\mathcal{A}}b_0\underline{ \mathfrak{m}}_{1}^{-}}{2c_{5}^{\underline{\mathfrak{m}}_{1}^+} p^{+}} \frac{\|u\|_{\mathscr{W}}^{p^{+}-\underline{\mathfrak{m}}_{1}^{-}}} { \|\mathfrak{a}\|_{L^{q(\cdot)}(\Omega)}},  &\mbox{ if } \|u\|_{\mathscr{W}}\leqslant 1; \\
\frac{c_{\mathcal{A}}b_0\underline{ \mathfrak{m}}_{1}^{-}}{2c_{5}^{\underline{\mathfrak{m}}_{1}^-} p^{+}} \frac{\|u\|_{\mathscr{W}}^{p^{-}-\underline{\mathfrak{m}}_{1}^{+}}} { \|\mathfrak{a}\|_{L^{q(\cdot)}(\Omega)}}, &\mbox{ if }  \|u\|_{\mathscr{W}}> 1 .
\end{split}
\end{array}\right.
\end{equation}
By hypothesis, we know that $p^{+}-\underline{\mathfrak{m}}_{1}^{-}<0$ and $p^{-}-\underline{\mathfrak{m}}_{1}^{+}<0$. Thus  \eqref{a331} implies that $\|u_k\|_{\mathscr{W}}\to +\infty$ as $k \to +\infty$ and using $(b)$, we conclude
$$\lim_{k\to +\infty} \frac{\Phi(u_k)}{\mathcal{I}_{\mathfrak{a}}(u_k)}=+\infty.$$
However, this contradicts \eqref{zeroo}. Consequently, we have $\lambda_{\star}>0$.\\
  \textbf{\textit{Step 2.}} $\lambda^{\star \star}$ is an eigenvalue of problem \eqref{p1}.

\noindent Indeed, let $(u_k)_{k \in \mathbb{N}}\subset \mathscr{W}^{+}\setminus\{0\}$ be a minizing sequence for $\lambda^{\star \star}$, that is
  \begin{equation}\label{a33d}
  \lim_{k\to +\infty} \frac{\Phi(u_k)}{\mathcal{I}_{\mathfrak{a}}(u_k)}= \lambda^{\star \star}>0.
  \end{equation}
Note that by $(a)$ and \eqref{a33d}, we have $(u_k)_{k \in \mathbb{N}}$ is bounded in $\mathscr{W}$. Since $\mathscr{W}$  is reflexive, it follows that there exists $u_0 \in \mathscr{W} $ such that $ u_k\rightharpoonup u_0$  in $\mathscr{W}$, thus by Lemma \ref{ll1}, we have that  
\begin{equation}\label{a34}
\lim_{k\to +\infty} \Phi(u_k) \geqslant\Phi(u_0). 
\end{equation}

\noindent On the other hand, by Lemma \ref{2.11} we get $
u_k \to u_0 \mbox{ in } L^{\mathfrak{m}_{1}(\cdot)q'(\cdot)}(\Omega)$. Then, we infer that
$\|u_k\|_{L^{\mathfrak{m}_{1}(\cdot)q'(\cdot)}(\Omega)}\to \|u_0\|_{L^{\mathfrak{m}_{1}(\cdot)q'(\cdot)}(\Omega)}$,  $\||u_k|^{\mathfrak{m}_{1}(\cdot)}\|_{L^{q'(\cdot)}(\Omega)}\to \||u_0|^{\mathfrak{m}_{1}(\cdot)}\|_{L^{q'(\cdot)}(\Omega)}$,  $\||u_k|^{\mathfrak{m}_{1}(\cdot)}\|_{L^{q'(\cdot)}(\Omega)}$ is bounded and  $|u_k|^{\mathfrak{m}(\cdot)}\rightharpoonup|u_0|^{\mathfrak{m}(\cdot)}$ in $L^{q'(\cdot)}(\Omega)$.
Thus $|u_k|^{\mathfrak{m}(\cdot)}\to|u_0|^{\mathfrak{m}(\cdot)}$ and  from  \eqref{a15},  we conclude that
\begin{equation*}
\begin{split}
|\mathcal{I}_{\mathfrak{a}}(u_k)-\mathcal{I}_{\mathfrak{a}}(u_0)| \leqslant \frac{2c_5}{\underline{\mathfrak{m}}_{1}^{-}}\|\mathfrak{a}\|_{L^{q(\cdot)}(\Omega)}\||u_k|^{\mathfrak{m}_{1}(x)}- |u_0|^{\mathfrak{m}_{1}(x)}\|_{L^{q'(\cdot)}(\Omega)} \to 0 \mbox{ as } k\to +\infty.
\end{split}  
\end{equation*}
 Therefore, 
\begin{equation}\label{a38}
\lim_{k\to +\infty} \mathcal{I}_{\mathfrak{a}}(u_k) = \mathcal{I}_{\mathfrak{a}}(u_0).
\end{equation}
In view of \eqref{a34}  and \eqref{a38}, we get
\begin{equation*}
\frac{\Phi(u_0)}{\mathcal{I}_{\mathfrak{a}}(u_0)}= \lambda^{\star \star} \mbox{ if } u_{0}\neq 0.
\end{equation*}
It remains to be shown that $u_0$ is nontrivial. We suppose by contradiction  that $u_0=0$. Then $u_k \rightharpoonup 0 $ in $\mathscr{W}$ and $u_k \to 0$ in $L^{\mathfrak{m}_{1}(\cdot)q'(\cdot)}(\Omega)$. Therefore,
\begin{equation}\label{a40}
\lim_{k\to+\infty} \mathcal{I}_{\mathfrak{a}}(u_k)=0.
\end{equation}
Now using \eqref{a33d}, give  $\varepsilon \in (0,  \lambda^{\star \star})$  fixed  there exists $k_0$ such that
\begin{equation*}
\vert \mathcal{J}_{\lambda,0}(u_k)-\lambda^{\star \star}\mathcal{I}_{\mathfrak{a}}(u_k)\vert< \varepsilon \mathcal{I}_{\mathfrak{a}}(u_k) \mbox{ for all }  k \geqslant k_0, 
\end{equation*}
that is,
\begin{equation*}
(\lambda^{\star \star} - \varepsilon)\mathcal{I}_{\mathfrak{a}}(u_k)< \mathcal{J}_{\lambda, 0}(u_k)<( \lambda^{\star \star}+\varepsilon)\mathcal{I}_{\mathfrak{a}}(u_k)\mbox{ for all }  k \geqslant k_0 .
\end{equation*}
 Passing to the limit in the above inequalities as $k \to +\infty$ and using  \eqref{a40}, we get
\begin{equation}\label{l1241}
\lim_{k\rightarrow+\infty} \mathcal{J}_{\lambda, 0}(u_k)=0.
\end{equation} 
Consequently  by \eqref{energia}, \eqref{a40}, and \eqref{l1241},  we obtain
\begin{equation}\label{zero}
\lim_{k\rightarrow+\infty}\Phi(u_k)=0.
\end{equation}
In contrast,  by $(a_{2})$, $(a_{3})$, and $(\mathcal{K})$, it follows that
\begin{equation}\label{a41}
\begin{split}
\Phi(u_k) \geqslant & \frac{c_{\mathcal{A}}b_0}{p^{+}} \int_{\mathbb{R}^{N}\times \mathbb{R}^{N}}\frac{|u_{k}(x)-u_{k}(y)|^{p(x,y)}}{|x-y|^{N+sp(x,y)}}\,dx\,dy \geqslant 0.
\end{split}
\end{equation}
Thus, by \eqref{zero} and \eqref{a41} we deduce 
$$\int_{\mathbb{R}^{N}\times \mathbb{R}^{N}}\frac{|u_{k}(x)-u_{k}(y)|^{p(x,y)}}{|x-y|^{N+sp(x,y)}}\,dx\,dy \to 0 \mbox{ as } k \to +\infty \mbox{ for all } k \in \mathbb{N}.$$
Then by Proposition \ref{lw0}, we conclude
\begin{equation}\label{conver}
u_k \to 0   \mbox{ in } \mathscr{W} \mbox{ as } k \to +\infty.
\end{equation}
Hence by \eqref{conver}  and  Claim $(a)$, we have 
$$ \lim _{\|u_k\|_{\mathscr{W}}\to 0, u \in \mathscr{W}^{+}} \frac{\Phi(u_k)}{\mathcal{I}_{\mathfrak{a}}(u_k)}= +\infty$$
which is a contradiction with \eqref{a33d}. Thus $u_0\neq0$. Therefore, $u_0$ is an eigenfunction and Step 2 is proved.\\
\textbf{\textit{Step 3.}} Given any $\lambda \in (\lambda^{\star \star}, + \infty)$, $\lambda$ is an eigenvalue of problem \eqref{p1}.

\noindent Let $\lambda \in  (\lambda^{\star \star}, +\infty)$  fixed. Then $\lambda$ is an eigenvalue of problem \eqref{p1} if and only if there exists  $u_{\lambda} \in  \mathscr{W}^{+}\setminus \{0\}$ a critical point of $ \mathcal{J}_{\lambda, 0}$.
Note that  $ \mathcal{J}_{\lambda, 0}$ is coercive. Indeed, using  \eqref{a21} and \eqref{a22}, we infer that $p^{-}>1> 2(\underline{\mathfrak{m}}_{1}^{+}-\eta) \geqslant 2 (\underline{\mathfrak{m}}_{1}^{-}-\eta) \geqslant 2\eta\frac{\underline{r}^{+}}{\underline{r}^{-}}> 2\eta$, thus using \eqref{a16} and \eqref{a31}, for all $u \in \mathscr{W}^{+}$ with $\|u\|_{\mathscr{W}}\geqslant 1$, we have
\begin{equation*}\label{a48}
 \begin{split}
 \mathcal{J}_{\lambda, 0}(u) \geqslant & \frac{c_{\mathcal{A}}b_0}{ p^{+}} \|u\|_{\mathscr{W}}^{p^{-}} - \lambda\Bigg[ M_1 + M_2\Bigg( \|u\|^{2(\underline{\mathfrak{m}}_{1}^{+}-\eta) }_{\mathscr{W}} +\|u\|^{2\eta \frac{\underline{r}^{+}}{\underline{r}^{-}}}_{ \mathscr{W}    }\Bigg)\Bigg]  \to  + \infty  \mbox{ as } \|u\|_{\mathscr{W}}\to + \infty.
 \end{split}
 \end{equation*}
 In addition, by  proof of  Step 2 enable us to affirm that $ \mathcal{I}_{\mathfrak{a}}$ is weakly-strongly continuous,  namely $u_k \rightharpoonup u$ implies $\mathcal{I}_{\mathfrak{a}}(u_k) \to \mathcal{I}_{\mathfrak{a}}(u)$. Thus by Lemma \ref{ll1}, the functional $\Phi$ is weakly lower semicontinuous, then  $ \mathcal{J}_{ \lambda,0}$ is weakly lower semicontinuous. Therefore, by   \cite[Theorem 1.2]{costa} there is  a global minimum point $u_{\lambda} \in  \mathscr{W}^{+}$ of $ \mathcal{J}_{\lambda, 0}$, hence $u_{\lambda}$ is a critical point of $ \mathcal{J}_{\lambda, 0}$.

 \noindent To complete Lemma proof we will show that $u_\lambda$ is nontrivial. Indeed, since 
$$\lambda^{\star \star}=\inf_{u \in \mathscr{W}^{+}}\frac{\Phi(u)}{\mathcal{I}_{\mathfrak{a}}(u)}$$
 and $\lambda >\lambda^{\star \star}$, we deduce that there is $v_{\lambda}$ in  $\mathscr{W}^{+}$ such that 
$ \displaystyle{\frac{\Phi(v_{\lambda})}{\mathcal{I}_{\mathfrak{a}}(v_{\lambda})}< \lambda}$, this is, $ \mathcal{J}_{\lambda, 0}(v_{\lambda})< 0.$ 
Thus $\displaystyle{\inf_{v \in \mathscr{W}^{+}} \mathcal{J}_{\lambda, 0}(v) < 0.}$
Consequently, we conclude that  $u_\lambda$ is a nontrivial critical point of $ \mathcal{J}_{\lambda, 0}$. Therefore,  $\lambda$  is an eigenvalue of problem \eqref{p1}.\\
\textbf{\textit{Step 4.}} Given any  $\lambda  \in(0, \lambda_{\star})$, $\lambda$ is not an  eigenvalue of problem \eqref{p1}.\\
\noindent  Indeed, suppose by contradiction  that there exists an eigenvalue $  \lambda \in(0, \lambda_{\star})$ of problem \eqref{p1}. Then  there exists $u_{\lambda} \in \mathscr{W}^{+}$, such that
\begin{equation*}
\langle \Phi'(u_{\lambda}), v \rangle = \lambda\langle \mathcal{I}_{\mathfrak{a}}'(u_{\lambda}), v \rangle \mbox{ for all } \hspace{0.1cm} v \in \mathscr{W}^{+}.
\end{equation*}
Then taking  $v=u_{\lambda}$, since $  \lambda \in (0, \lambda_{\star})$  and by definition of $ \lambda_{\star} $, we have that
\begin{equation*}
\begin{split}
 \langle \Phi'(u_{\lambda}), u_\lambda\rangle = & \lambda\langle \mathcal{I}_{\mathfrak{a}}'(u_{\lambda}), u_{\lambda}\rangle <  \lambda_{\star}\langle \mathcal{I}_{\mathfrak{a}}'(u_{\lambda}), u_{\lambda}\rangle  \leqslant  \langle \Phi'(u_{\lambda}), u_{\lambda}\rangle
 \end{split}
\end{equation*}
which is a  contradiction. Hence, there does not exist $\lambda \in (0 ,  \lambda_{\star})$ eigenvalue of  problem \eqref{p1}. Thus the claim is verified.

 \noindent  Therefore, the proof of Theorem \ref{peso2} is  complete. 
 \end{proof}

\section{Appendix}\label{apendice}
\hfill \break
\subsection{Embedding results } 
\hfill \break
 In this part we present the key embedding results that were used throughout the research.

\noindent \textit{Proof of Lemma \ref{lw1}} 
We consider the set $ \mathbb{M}=\{ u \in \mathscr{W}; \| u \|_{L^{p(\cdot)}(\Omega)}=1\}$. Then to prove this Lemma  it suffices to prove that
\begin{equation*}
\inf_{u\in \mathbb{M}}[u]^{s, p(\cdot, \cdot)}_{\mathbb{R}^{N}}= \zeta_1>0.
\end{equation*}
Initially, we observe that $\zeta_1 \geqslant 0$ and we prove $\zeta_1$ is attained in $\mathbb{M}$. Let $(u_{k})_{k\in \mathbb{N}} \subset \mathbb{M}$ be a minimizing sequence, that is, $[u_{k}]^{s, p(\cdot, \cdot)}_{\mathbb{R}^{N}}\rightarrow \zeta_1$ as $k \to +\infty$. This implies that $(u_{k})_{k\in \mathbb{N}}$ is bounded in  $ \mathscr{W} $ and $L^{p(\cdot)}(\Omega)$, therefore in $W^{s,p(\cdot, \cdot)}(\Omega)$. Consequently  up to a
subsequence $u_k \rightharpoonup u_0$ in $W^{s,p(\cdot, \cdot)}(\Omega)$ as $k\to +\infty$. Thus, from Corollary \ref{3.4a}, it follows that $u_k \to u_0$ in $L^{p(\cdot)}(\Omega)$ as $k\to +\infty$. We extend $u_0$ to $\mathbb{R}^{N}$ by setting $u_0(x) = 0$ on $x \in \mathbb{R}^{N}\setminus \Omega$. This implies $u_k(x) \to u_0(x)$ a.e. $x \in \mathbb{R}^{N}$ as $k \to +\infty$. Hence by using Fatou’s Lemma, we have
\begin{equation*}
\int_{\mathbb{R}^{N}\times \mathbb{R}^{N}} \frac{|u_0(x)-u_0(y)|^{p(x,y)}}{|x-y|^{N+sp(x,y)}}\,dx\,dy \leqslant \liminf_{k\to +\infty}\int_{\mathbb{R}^{N}\times \mathbb{R}^{N}} \frac{|u_k(x)-u_k(y)|^{p(x,y)}}{|x-y|^{N+sp(x,y)}}\,dx\,dy
\end{equation*}
which implies that
$$[u_{0}]^{s, p(\cdot, \cdot)}_{\mathbb{R}^{N}} \leqslant \liminf_{k\to +\infty}[u_{k}]^{s, p(\cdot, \cdot)}_{\mathbb{R}^{N}}=\zeta_1,$$
and thus $u_0 \in \mathscr{W}$. Moreover,  $\| u_0 \|_{L^{p(\cdot)}(\Omega) }=1$ and then $u_0 \in \mathbb{M} $. In particular, $u_{0}\neq 0$ and $[u_{0}]^{s, p(\cdot, \cdot)}_{\mathbb{R}^{N}}= \zeta_1>0$.

\hfill $\Box$

\noindent\textit{Proof of Lemma \ref{2.11}}
First we observe that by Lemma \ref{lw1},  for all $ u \in \mathscr{W}$, we get
\begin{equation}\label{liim}
\begin{split}
\|u\|_{W^{s,p(\cdot,\cdot)}(\Omega)}  \leqslant \|u\|_{L^{p(\cdot)}(\Omega)} + \|u\|_{\mathscr{W}}  \leqslant \Bigg(  \frac{1}{\zeta_1} +1 \Bigg) \|u\|_{\mathscr{W}},
\end{split}
\end{equation}
that is,  $ \mathscr{W}$ is continuously embedded in $ W^{s,p(\cdot,\cdot)}(\Omega)$, and by Corollary \ref{3.4a}   we conclude that  $ \mathscr{W} $ is continuously embedded in $ L^{r(\cdot)}(\Omega)$. To prove that the embedding above  is compact  we consider  $(u_{k})_{k\in \mathbb{N}}$ a bounded sequence in $ \mathscr{W}$. Using \eqref{liim}, follows that $(u_{k})_{k\in \mathbb{N}}$ is bounded in $  W^{s,p(\cdot, \cdot)}(\Omega) $. Hence by Corollary \ref{3.4a}, we infer that there  exists $u_0 \in {L^{r(\cdot)}(\Omega)} $  such that up to a subsequence $u_k\to u_0$  in ${L^{r(\cdot)}(\Omega)} $ as $k \to +\infty$.   Since that $u_k = 0$ a.e. in $\mathbb{R}^{N}\setminus\Omega$ for all $k \in \mathbb{N}$,  so we define $u_0 = 0$ a.e. in  $\mathbb{R}^{N}\setminus\Omega$ and obtain that the convergence occurs in $L^{r(\cdot)}(\Omega)$. This completes the  proof this Lemma.

\hfill $\Box$

 \subsection{Functional properties operator $ \mathscr{L}_{\mathcal{A}K}$}
 \hfill \break
\noindent\textit{Proof of Lemma \ref{ll1}}  
 $(i)$  Using standard arguments proof this item.

\noindent $(ii)$  From $(i)$ the functional $\Phi$ is of   class  $C^{1}(\mathscr{W}, \mathbb{R})$, and by  hypothesis $(a_{1})$,  the functional $\Phi'$ is   monotone. Thus, by  \cite[Lemma 15.4]{kavian}  we conclude that
$\langle \Phi'(u), u_k-u\rangle +\Phi (u) \leqslant \Phi (u_k) $ for all $k \in \mathbb{N}$.

\noindent Thus, since  $u_k \rightharpoonup u$ in $\mathscr{W}$, as $k\to +\infty$ we obtain $\displaystyle{   \Phi (u) \leqslant \liminf_{k\to +\infty}\Phi (u_k)}$. That is, the functional $\Phi$ is weakly lower semicontinuous.
\\
\\
\noindent $(iii)$ Let $(u_{k})_{k\in \mathbb{N}}$ be a sequence in  $\mathscr{W}$ as in the statement. We have that by $(i)$, $\Phi'$ is a continuous  functional. Therefore,
 \begin{equation}\label{ss0}
\displaystyle{\lim_{k\to +\infty}\langle \Phi'(u), u_k-u \rangle = 0}.
\end{equation}
Now, we observe that
\begin{equation}\label{ss1}
\langle \Phi'(u_k)- \Phi'(u), u_k-u \rangle=  \langle \Phi'(u_k), u_k-u \rangle -\langle  \Phi'(u), u_k-u \rangle \mbox{ for all } k \in \mathbb{R}.
\end{equation}
Thus by \eqref{inffo}, \eqref{ss0}, and \eqref{ss1}, we infer
\begin{equation}\label{sa}
\limsup_{k\to +\infty}\langle \Phi'(u_k)- \Phi'(u), u_k-u \rangle \leqslant 0.
\end{equation}
Furthermore, since $\Phi$ is  strictly convex by hypothesis $(a_1)$,   $\Phi'$ is monotone (see \cite[Lemma 15.4]{kavian}), we have
\begin{equation}\label{ss2}
\langle \Phi'(u_k)- \Phi'(u), u_k-u \rangle \geqslant 0 \hspace{0.2cm }\mbox{ for all}  \hspace{0.1cm }k \in \mathbb{N}.
\end{equation}
Therefore, by \eqref{sa} and \eqref{ss2}, we infer that
\begin{equation}\label{sb}
\lim_{k\to +\infty}\langle \Phi'(u_k)- \Phi'(u), u_k-u \rangle = 0.
\end{equation}
Consequently, by  \eqref{ss0}, \eqref{ss1}, and \eqref{sb}, we conclude
\begin{equation}\label{sc}
\lim_{k\to +\infty}\langle \Phi'(u_k), u_k-u \rangle = 0.
\end{equation}
Since $\Phi$ is   strictly convex,  we get
 \begin{equation}\label{sd}
  \Phi(u)+ \langle \Phi'(u_k), u_k-u \rangle \geqslant  \Phi(u_k)\hspace{0.2cm }\mbox{for all} \hspace{0.1cm } k \in \mathbb{N}.
\end{equation}
Thus, by \eqref{sc} and  \eqref{sd}, we have
\begin{equation}\label{se}
   \Phi(u) \geqslant\lim_{k \to +\infty}\Phi(u_k). 
\end{equation}
Since  $\Phi$ is  weakly lower semicontinuous (see $(ii)$) and by \eqref{se}, we conclude that
\begin{equation}\label{ss4}
  \Phi(u) =\lim_{k \to +\infty}\Phi(u_k).
\end{equation}
On the other hand, by \eqref{sb} the sequence $( \mathcal{U}_{k}(x,y) )_{k\in \mathbb{N}}$  converge to $0$   in $L^{1}(\mathbb{R}^{N}\times\mathbb{R}^{N})$ as $k \to +\infty$, where
\begin{equation*}
\mathcal{U}_{k}(x,y):= [\mathcal{A}(u_k(x)-u_k(y))- \mathcal{A}(u(x)-u(y))[(u_k(x)-u_k(y))-(u(x)-u(y)]K(x,y)\geqslant 0.
\end{equation*} 
Hence there exists  a subsequence $(u_{k_{j}})_{j\in \mathbb{N}}$ of  $(u_{k})_{k\in \mathbb{N}}$ such that     
\begin{equation}\label{sg}
\mathcal{U}_{k_j}(x,y)\to 0 \mbox{ a.e.  } (x,y)\in \mathbb{R}^{N}\times \mathbb{R}^{N} \mbox{ as } j \to +\infty. 
\end{equation}
We denoted $\mu_{j}(x,y)=u_{k_j}(x)-u_{k_j}(y)$ and $\mu(x,y)=u(x)-u(y)$  for all $(x,y)\in \mathbb{R}^{N}\times \mathbb{R}^{N}$.
\\
\textbf{Claim a.} If $\mathcal{U}_{k_j}(x,y)\to 0 \mbox{ a.e.  } (x,y)\in \mathbb{R}^{N}\times \mathbb{R}^{N}$, 
then  $\mu_{j}(x,y)\to\mu(x,y)$  as $j\to +\infty$ for almost all $(x,y)\in \mathbb{R}^{N}\times \mathbb{R}^{N}$. \\ Indeed,  fixed $ (x,y)\in \mathbb{R}^{N}\times\mathbb{R}^{N}$ with $x\neq y$   we suppose  by contradiction  that the  sequence $(\mu_{j}(x,y))_{j\in \mathbb{N}}$ is unbounded for $(x,y)\in \mathbb{R}^{N}\times \mathbb{R}^{N}$ fixed. Using \eqref{sg} we get $\mathcal{U}_{k_j}(x,y) \to 0$ as $j\to +\infty$ in $\mathbb{R}$, consequently there is  $M>0$ such that for all $ j \in \mathbb{N}$
\begin{equation}\label{sh}
\Big|[\mathcal{A}(\mu_{j}(x,y))- \mathcal{A}(\mu(x,y))](\mu_{j}(x,y)-\mu(x,y))K(x,y)\Big|\leqslant M. 
\end{equation}
Thus, denoting 
$$V_{\mathcal{A}}:= [\mathcal{A}(\mu_{j}(x,y))\mu_{j}(x,y)+ \mathcal{A}(\mu(x,y))\mu(x,y)]K(x,y) \mbox{for all } j \in \mathbb{R},$$
 we get from \eqref{sh} that
\begin{equation*}
\begin{split}
V_{\mathcal{A}} \leqslant & M +  \mathcal{A}(\mu(x,y))\mu_{j}(x,y)K(x,y) +\mathcal{A}(\mu_{j}(x,y))\mu(x,y)K(x,y) \mbox{for all } j \in \mathbb{R}.
\end{split}
\end{equation*}
So using  $({a}_{2})$ and  $(\mathcal{K})$ in  inequality above, we have  $\mbox{for all } j \in \mathbb{R}$ that,
\begin{equation}\label{si}
\begin{split}
c_{\mathcal{A}}b_0\frac{|\mu_{j}(x,y)|^{p(x,y)}}{|x-y|^{N+sp(x,y)}}+c_{\mathcal{A}}b_0\frac{|\mu(x,y)|^{p(x,y)}}{|x-y|^{N+sp(x,y)}}\leqslant & M + C_{\mathcal{A}}b_1\frac{|\mu(x,y)|^{p(x,y)-1}|\mu_{j}(x,y)|}{|x-y|^{N+sp(x,y)}} \\ &+ C_{\mathcal{A}}b_1\frac{|\mu_{j}(x,y)|^{p(x,y)-1}|\mu(x,y)|}{|x-y|^{N+sp(x,y)}}.
\end{split}
\end{equation}
\noindent  Dividing \eqref{si} by $|\mu_{j}(x,y)|^{p(x,y)}$, we achieve
 \begin{equation}\label{estre}
 \begin{split}
\frac{c_{\mathcal{A}}b_0}{|x-y|^{N+sp(x,y)}} +\frac{c_{\mathcal{A}}b_0|\mu(x,y)|^{p(x,y)}}{|\mu_{j}(x,y)|^{p(x,y)}|x-y|^{N+sp(x,y)}}\leqslant & \frac{M}{|\mu_{j}(x,y)|^{p(x,y)}} \\& + \frac{C_{\mathcal{A}}b_1|\mu(x,y)|^{p(x,y)-1}}{|\mu_{j}(x,y)|^{p(x,y)-1}|x-y|^{N+sp(x,y)}} \\ &+ \frac{C_{\mathcal{A}}b_1|\mu(x,y)|}{|\mu_{j}(x,y)||x-y|^{N+sp(x,y)}}
\end{split}
\end{equation}
$\mbox{for all } j \in \mathbb{R}.$
Since  we are supposing that the sequence $(\mu_{j}(x,y))_{j \in \mathbb{N}}$  is unbounded, we can assume that $|\mu_{j}(x,y)| \to + \infty$ as $j \to + \infty$, then by  \eqref{estre} we obtain $c_{\mathcal{A}}b_0 \leqslant 0$ which is an contradiction.

\noindent Therefore,  the sequence $(\mu_{j}(x,y))_{j \in\mathbb{N}}$   is bounded in $\mathbb{R}$ and up to a subsequence $(\mu_{j}(x,y))_{j \in\mathbb{N}}$     converges to some $\nu \in \mathbb{R}$. Thus we obtain $\mu_{j}(x,y)\to \nu$ $\mbox{as } j \to + \infty$. Thence denoting $$\mathcal{U}(x,y):= [\mathcal{A}(\nu)- \mathcal{A}(\mu(x,y))](\nu -(\mu(x,y))K(x,y)$$ and using  $(a_{1})$  we conclude that
\begin{equation}\label{zeroa}
\mathcal{U}_{k_j}(x,y)\to \mathcal{U}(x,y) \mbox{ as } j \to + \infty.
\end{equation}
Consequently,  by \eqref{sg} and \eqref{zeroa}, we get
\begin{equation}\label{mono}
\mathcal{U}(x,y)= [\mathcal{A}(\nu)- \mathcal{A}(\mu(x,y))](\nu -(\mu(x,y))K(x,y)= 0.
\end{equation}
In this way, by strictly convexity  of $\mathscr{A}$, \eqref{mono} this occurs only if  $\nu =\mu(x,y)= u(x)-u(y)$. Therefore, by uniqueness of limit 
\begin{equation}\label{converi}
 u_{k_{j}}(x)-u_{k_{j}}(y)=\mu_{j}(x,y) \to\mu(x,y)= u(x)-u(y) \mbox{  in } \mathbb{R} \mbox{ as } j\to +\infty
\end{equation}
for almost all $ (x, y) \in \mathbb{R}^{N}\times \mathbb{R}^{N}$.

\noindent Now we consider   the sequence $(g_{k_j})_{j \in\mathbb{N}}$ in $L^{1}(\mathbb{R}^{N}\times \mathbb{R}^{N})$ defined pointwise for all $j \in \mathbb{N}$ by
\begin{equation*}
g_{k_j}(x,y):= \bigg[\frac{1}{2}\bigg(\mathscr{A}(\mu_{j}(x,y))+ \mathscr{A}(\mu(x,y))\bigg)- \mathscr{A}\bigg(\frac{ \mu_{j}(x,y)-\mu(x,y)}{2}\bigg)\bigg]K(x,y).
\end{equation*}
By convexity the map $ \mathscr{A}$ (see $(a_{1})$),  $g_{k_j}(x,y)\geqslant 0$ for almost all $(x,y)\in \mathbb{R}^{N}\times\mathbb{R}^{N}$. Furthermore, by continuity of map  $ \mathscr{A} $ (see $(a_{1})$) and \eqref{converi}, we have $$ g_{k_j}(x,y)\to \mathscr{A} (\mu(x,y))K(x,y) \mbox{ as } j \to +\infty \hspace{0.2cm}\mbox{ for all } \hspace{0.1cm}(x,y)\in \mathbb{R}^{N}\times\mathbb{R}^{N}.$$ 
 Therefore, using this above  information,  \eqref{ss4}, and Fatou's Lemma,  we get
\begin{equation*}
\begin{split}
\Phi(u)\leqslant \liminf_{j \to +\infty} g_{k_j}(x,y)= \Phi(u)- \limsup_{j \to +\infty}\int_{\mathbb{R}^{N}\times \mathbb{R}^{N}}\mathscr{A}\bigg(\frac{ \mu_{j}(x,y)-\mu(x,y)}{2}\bigg)K(x,y)\,dx\,dy.
\end{split}
\end{equation*}
Then 
\begin{equation}\label{lm1}
\limsup_{j \to +\infty}\int_{\mathbb{R}^{N}\times \mathbb{R}^{N}}\mathscr{A}\bigg(\frac{ \mu_{j}(x,y)-\mu(x,y)}{2}\bigg)K(x,y)\,dx\,dy\leqslant 0. 
\end{equation}
On the other hand, by $(a_{2})$, $(a_{3})$,  $(\mathcal{K})$, and Proposition \ref{lw0}, we infer that
\begin{equation}\label{sn}
\begin{split}
\int_{\mathbb{R}^{N}\times \mathbb{R}^{N}}\mathscr{A}\bigg(\frac{ \mu_{j}(x,y)-\mu(x,y)}{2}\bigg)K(x,y)\,dx\,dy \geqslant &  \frac{c_{\mathcal{A}}b_0}{2^{p^{-}}p^{+}} \inf \big\{ \| u_{k_j} - u \|_{\mathscr{W}}^{p^{-}}, \| u_{k_j} - u \|_{\mathscr{W}}^{p^{+}} \big\}\\\geqslant & 0  \mbox{  for all } j \in \mathbb{N}.
\end{split}
\end{equation}
Consequently, by \eqref{lm1} and \eqref{sn}, we achieve
\begin{equation*} 
\lim_{j\to+\infty}\| u_{k_j} - u \|_{\mathscr{W}} = 0. 
\end{equation*}
Therefore,   we can conclude that $u_{k_j} \to u$ in $\mathscr{W}$ as $j \to +\infty$. Since $(u_{k_j})_{j\in \mathbb{N}}$ is an arbitrary subsequence of $(u_k)_{k \in \mathbb{N}}$, this shows that  $u_k \to u$  as $k \to +\infty$ in  $\mathscr{W}$, as required.
\hfill $\Box$


\begin{thebibliography}{}
%
%

  
\bibitem{alves} Alves, C. O., Ferreira, M. C.:  Nonlinear perturbations of a $p(x)$-Laplacian equation with critical growth in $\mathbb{R}^{N}$. Math. Nachr. \textbf{287},  849–868 (2014). 


\bibitem{principio} Ambrosetti, A., Brezis H.,  Cerami G.: Combined effects on concave and convex nonlinearities in some elliptic problems. J. Funct. Anal. \textbf{122},  519–543 (1994).





\bibitem{azr}  Azroul, E.,   Benkirane, A.,   Shimi, M.: Eigenvalue problems involving the fractional $p(x)$-Laplacian fractional operator. Adv. Oper. Theory \textbf{4}, 539-555 (2019).





  
\bibitem{anaour}  Bahrouni, A.: Comparasion and sub-supersolution principles for the fractional $p(x)$-Laplacian. J.  Math. Anal.  Appl. \textbf{458},  1363-1372 (2018). 

  
 \bibitem{bahrouni} Bahrouni, A., R\v adulescu,  V. D.:  On a new fractional Sobolev space and applications to nonlocal variational problems with variable exponent. Discrete Contin. Dyn. Syst. \textbf{11}, 379-389 (2017).
 



\bibitem{moll}Bisci,  G. M.,  R\v adulescu, V. D.,  Servadei, R.: Variational methods for nonlocal fractional problems. Cambridge University Press, Cambridge, (2016).

\bibitem{moli} Bisci,  G. M.,  Repov\v s, D. D.: Multiple solutions for elliptic equations involving a general operator in divergence form. Ann. Acad. Sci. Fenn. Math. \textbf{39}, 259-273 (2014).








\bibitem{brascolieb} Brasco, L., Squassina,  S, Yang, Y.:  Global compactness results for nonlocal problems. Discrete and Continuous Dynamical Systems  \textbf{11}, 391-424 (2018).

  \bibitem {brasco}  Brasco, L.,   Parini, E.,  Squassina,   M.: Stability of variational eigenvalues for the fractional $p$-Laplacian. Discrete  Cont. Dyn. Sys. \textbf{36}, 1813–1845 (2016). 
 




\bibitem{brezish} Brezis, H.,  Nguyen, H.-M.: Non-local functionals related to the total variation and applications. Image Processing. Ann. PDE \textbf{4},  77 pp (2018).


 \bibitem{brown1} Brown, K. J., Wu, T. F.: A fibering map approach to a semilinear elliptic boundary
value problem. Electron. J. Differential Equations \textbf{69},  1–9 (2007).


\bibitem{brown} Brown, K. J.,  Zhang, Y.: The Nehari manifold for semilinear elliptic equation with
a sign-changing weight function. J. Differential Equation \textbf{193},  481–499 (2003).

\bibitem{bucu}  Bucur, C.,  Valdinoci, E.: Nonlocal diffusion and applications. Lect. Notes Unione Mat. Ital., vol.20, Springer/Unione Matematica Italiana, Cham/Bologna, (2016).






 
 
 

\bibitem{caff}  Caffarelli, L.: Nonlocal equations, drifts and games. Nonlinear Partial Differential Equations \textbf{7},   37–52 (2012).
\bibitem{edcarlos} Carvalho, M. L. M., Silva, D. E. da,  Goulart, C.: Quasilinear elliptic problems with concave-convex nonlinearities. In Commun. Contemp. Math. \textbf{19}, 1650050 (2017).

 

 \bibitem{colau}  Colasuonno, F., Pucci, P., Varga, C.: Multiple solutions for an eigenvalue problem
involving p–Laplacian type operators. Nonlinear Anal.\textbf{ 75},  4496–4512 (2012).

\bibitem{costa}  Costa, D. G.: An invitation to
variational methods In Differential Equations. Birkhauser, Boston, Basel, Berlin, (2007).








 \bibitem{dinezza}  Di Nezza, E., Palatucci, G., Valdinoci, E.:  Hitchhiker’s guide to the fractional Sobolev spaces. Bull. Sci. Math. \textbf{136}, 521–573 (2012).

\bibitem{dien}  Diening, L.,  Harjulehto, P.,  Hasto, P.,   Ruzicka, M.: Lebesgue and Sobolev spaces with variable exponents. Lecture Notes in Mathematics, Vol. 2017, Springer-Verlag, Heidelberg, (2011).





\bibitem{poh1} Dr\'abek, P., Pohozaev, S. I.: Positive solutions for the $p-$Laplacian: application of the fibering method.
Proc. Roy. Soc. Edinburgh \textbf{127A}, 703–726 (1997). 
 
\bibitem{edf}  Edmunds, D. E,  Ráskosník, J.: Sobolev embeddings with variable exponent. Studia
Math. \textbf{143}, 267-293 (2000).

 \bibitem{emdi} Emmrich, E., Puhst, D.: Measure-valued and weak solutions to the nonlinear peridynamic model in nonlocal elastodynamics. Nonlinearity \textbf{28}, 285–307 (2014).
 

\bibitem{fann} Fan, X.,  Zhang, Q.:  Existence of solutions for $p(x)$-Laplacian Dirichlet problem.
Nonlinear Anal. \textbf{52}, 1843-1852 (2003).

\bibitem{fan} Fan, X.,  Zhao, D.: On the spaces $L^{p(x)}(\Omega)$ and $W^{m,p(x)}(\Omega)$. J. Math. Anal. Appl. \textbf{263}, 424–446 (2001).





\bibitem{fp}  Franzina, G., Palatucci, G.: Fractional $p$-eigenvalues. Riv. Mat. Univ. Parma \textbf{5}, 373-386 (2014).
 
 \bibitem{warma} Gal, C. G., Warma, M.: Bounded solutions for nonlocal boundary value problems on Lipschitz manifolds with boundary. Adv. Nonlinear Stud. \textbf{16}, 529–550 (2016).
 
 \bibitem{binge1}   Ge, B., Zhou   Q. M., Wu Y. H.: Eigenvalues of the $p(x)$-biharmonic operator with indefinite weight. Z. Angew. Math Phys. \textbf{66}, 1007-1021 (2015).
  
  
   
  
 





\bibitem{elard1}  Hurtado, E. J.,    Miyagaki, O. H.,  Rodrigues,  R. S.: Existence and multiplicity of solutions for a class of elliptic equations without Ambrosetti-Rabinowitz type conditions. Journal of Dynamics and Differential Equations \textbf{30}, 405-432 (2018).

\bibitem{elard2} Hurtado, E. J.,    Miyagaki, O. H.,  Rodrigues,  R. S.:  Existence and asymptotic behaviour for a Kirchhoff type equation with variable critical growth exponent. Milan J. Math.  \textbf{85}, 71–102  (2017).
 \bibitem{ky}  Ho, K., Kim, Y. H.:  A-priori bounds and multiplicity of solutions for nonlinear elliptic
problems involving the fractional $p(\cdot)$-Laplacian. Nonlinear Analysis \textbf{188},  179–201 (2019).

 \bibitem{ian1} Iannizzotto,  A.,  Squassina, M.: Weyl-type laws for fractional $p$-eigenvalue problems. Asymptot. Anal.  \textbf{4}, 233–245 (2014).
 

 
\bibitem{kaufmann} Kaufmann, U.,   Rossi, J. D., Vidal, R.: Fractional Sobolev spaces with variable exponents and fractional $p(x)$-Laplacian. Electron. J. Qual. Theory Differ. Equ. \textbf{76}, 1-10 (2017).

\bibitem{kavian} Kavian, O.: Introduction à la Thèorie des Points Critiques et Applications aux
Probl\`emes Elliptiques. Springer-Verlag, New York, (1993).
\bibitem{kim}  Kim, I. H.,   Kim, Y. H.: Mountain pass type solutions and positivity of the infimum eigenvalue for quasilinear elliptic equations with variable exponents.  Manuscripta Math. \textbf{147}, 169–191 (2015).


\bibitem{migio}  Kussi, T., Mingione, G., Sire, Y.: Nonlocal equations with measure data. Commun. Math. Phys.  \textbf{337}, 1317-1368 (2015).
\bibitem{mazon}    Mazón, J. M.,  Rossi, J. D.,  Toledo  J. J.:  Nonlocal perimeter, curvature and minimal surfaces for measurable sets, Frontiers in Mathematics, Birk\"auser (2019).













\bibitem{nehari} Nehari, Z.: On a class of nonlinear second-order equations. Trans. Amer. Math. Soc.
\textbf{95},  101–123 (1960). 

\bibitem{poh2} Pohozaev, S. I.: On one approach to nonlinear equations.  Dokl. Akad. Nauk 247 (1979) 1327-1331 (in Russian) (\textbf{20},  912-916 (1979) (in English)).

\bibitem{poh3} Pohozaev, S. I.:  On a constructive method in calculus of variations. Dokl. Akad. Nauk \textbf{298}, 1330-1333 (1988)(in Russian) (\textbf{37}, 274-277 (1988)  (in English)).

\bibitem{poh4} Pohozaev, S. I.:  On fibering method for the solutions of nonlinear boundary value problems. Trudy Mat. Inst. Steklov \textbf{192}, 146–163 (1990) (in Russian).




\bibitem{puccisara}   Pucci, P., Saldi, S.: Multiple solutions for an eigenvalue problem involving nonlocal elliptic $p$-Laplacian operators.  Geometric Methods in PDE’s - Springer INdAM Series - Vol. 13, G. Citti, M. Manfredini, D. Morbidelli, S. Polidoro, F. Uguzzoni Eds., pages 16, (2015).


\bibitem{radu} R\v adulescu, V. D.,  Repov\v s, D. D.: Partial differential equations with variable exponents: Variational Methods and Qualitative Analysis. CRC Press, Taylor \& Francis Group, Boca Raton, FL, (2015). 

 
 \bibitem{vaal} Servadei, R., Valdinoci, E.: Mountain pass solutions for nonlocal elliptic operators. J. Math. Anal. Appl. \textbf{389}, 887–898 (2012).
  

  \bibitem{vazquez} Vázquez, J. L.: The mathematical theories of diffusion: nonlinear and fractional diffusion. In: Bonforte, M., Grillo, G. (eds.) Nonlocal and Nonlinear Diffusions and Interactions: New Methods and Directions. Lecture Notes in Mathematics \textbf{2186}, 205–278 (2017).
  






 \bibitem{wil}  Willem, M.: Minimax Theorems. In: Progress in NLDE and their Applications, vol \textbf{24}, Birkhäuser Boston Inc., Boston, MA, (1996).
 


\end{thebibliography}
\end{document}